\documentclass[11pt, reqno]{amsart}  

%Packages
\usepackage[margin=3 cm]{geometry} 
\usepackage{amsmath,amssymb,amsthm}   
\usepackage{url}       
\usepackage{hyperref}     

\usepackage{xy}
\input xy 
\xyoption{all}

 \usepackage{graphicx} 
\usepackage{pstricks}
\usepackage{epstopdf}

%geometry of paper
\geometry{a4paper}

%% The Natural numbers, the complex numbers, etc.

\newcommand{\bF}{{\mathbb F}}
\newcommand{\bN}{{\mathbb N}}

\newcommand{\bQ}{{\mathbb Q}}
\newcommand{\bR}{{\mathbb R}}

\newcommand{\bZ}{{\mathbb Z}}

%% Boldfaced letters

%% Hatted letters

%% mathfrak letters
\newcommand{\fs}{\mathfrak{s}}
\newcommand{\ft}{\mathfrak{t}}
\newcommand{\fC}{\mathfrak{C}}

%% Isomorphism, homeomorphisms, inclusion etcs

%% Complex projective space shortcuts

%% Real projective space shortcuts

%% Mathcal letters
\newcommand{\mA}{\mathcal{A}}

\newcommand{\mE}{\mathcal{E}}
\newcommand{\mF}{\mathcal{F}}
\newcommand{\mG}{\mathcal{G}}

\newcommand{\mJ}{\mathcal{J}}

\newcommand{\mL}{\mathcal{L}}
\newcommand{\mN}{\mathcal{N}}

\newcommand{\mP}{\mathcal{P}}

\newcommand{\mU}{\mathcal{U}}
\newcommand{\mV}{\mathcal{V}}

%% positive and negative second betti number

%% Greek letters

\newcommand{\Om}{\Omega}
\newcommand{\om}{\omega}

\newcommand{\Si}{\Sigma}
\newcommand{\la}{\lambda}
\newcommand{\be}{\beta}
\newcommand{\al}{\alpha}
\newcommand{\de}{\delta}

\newcommand{\ga}{\gamma}

%% Heegaard Floer homology

\newcommand{\hfp}{HF^+}

\newcommand{\hfhat}{\widehat{HF}}
\newcommand{\hfkhat}{\widehat{HFK}}

%% margin comment

%% absolute value and norm
\providecommand{\abs}[1]{\lvert#1\rvert}

%% Random

\newcommand{\bdd}{\partial}

\newcommand{\depth}{\mbox{depth}}

%% End of shortcuts ----

%% Theorem environments
%% Theorems
\newtheorem{thm}{Theorem}  [section]
\newtheorem{theorem/definition}{Theorem/Definition}[section]
\newtheorem{prop}[thm]{Proposition}

\newtheorem{cor}[thm]{Corollary}
\newtheorem{lemma}[thm]{Lemma}

\newtheorem*{lemC}{Lemma C}
\newtheorem*{thmA}{Theorem A}
\newtheorem*{thmB}{Theorem B}

%% Definitions
\theoremstyle{definition}

\newtheorem{deff}[thm]{Definition}

%% Remarks
\theoremstyle{remark}

\newtheorem{rmk}{Remark}

\newtheorem{convention}{Convention}

%% Math Operators

%% Vector spaces, groups, algebra

%% Maps

%% Category theory
\DeclareMathOperator{\Hom} {Hom}

%% Topology and geometry

\DeclareMathOperator{\Int} {Int}

\DeclareMathOperator{\Spin} {Spin}

%% Knots

%% Enumeration kind

%%% BEGIN DOCUMENT
 \begin{document}

%TITLE
\title{The sutured Floer polytope and taut depth one foliations}         
\date{\today}        
\author{Irida Altman}
\maketitle

%ABSTRACT  
\begin{abstract}
For closed 3-manifolds, Heegaard Floer homology is related to the Thurston norm through results due to Ozsv\'ath and Szab\'o, Ni, and Hedden. For example, given a closed 3-manifold $Y$, there is a bijection between vertices of the $HF^+(Y)$ polytope carrying the group $\bZ$ and the faces of the Thurston norm unit ball that correspond to fibrations of $Y$ over the unit circle.  Moreover, the Thurston norm unit ball of $Y$ is dual to the polytope of $\underline{\hfhat}(Y)$.

We prove a similar bijection and duality result for a class of 3-manifolds with boundary called sutured manifolds. A sutured manifold is essentially a cobordism between two surfaces $R_+$ and $R_-$ that have nonempty boundary. We show that there is a bijection between vertices of the sutured Floer polytope carrying the group $\bZ$ and equivalence classes of taut depth one foliations that form the foliation cones of Cantwell and Conlon.    Moreover, we show that a function defined by Juh\'asz, which we call the geometric sutured function, is analogous to the Thurston norm in this context.  In some cases, this function is an asymmetric norm and our duality result is that appropriate faces of this norm's unit ball subtend the foliation cones.

An important step in our work is the following fact: a sutured manifold admits a fibration or a taut depth one foliation whose sole compact leaves are exactly the connected components of $R_+$ and $R_-$, if and only if, there is a surface decomposition of the sutured manifold resulting in a connected product manifold. 

\end{abstract}

\section{Introduction} \label{section intro}

\subsection{Overview}

The inspiration for this paper comes from a connection between two invariants of closed 3-manifolds: the {\it Thurston norm} \cite{Thurston norm} and  {\it Heegaard Floer homology} \cite{OSmain}.  Thurston defined the  norm in the 70's using purely topological terms.  Twenty years later, Ozsv\'ath and Szab\'o developed the Heegaard Floer homology machinery, which draws its roots from complicated pseudo-holomorphic techniques.  In this paper, we aim to show that there is a similar connection between two invariants of a type of 3-manifolds with boundary called {\it sutured manifolds}.  

Let us first consider the closed case. Let $Y$ be a closed oriented 3-manifold.  Recall that the Thurston norm is a seminorm on the homology group $H_2(Y;\bR)$ that measures the minimal `complexity' of surfaces representing an integral homology class. Rational rays through certain faces of the Thurston norm unit ball, called {\it fibred faces}, correspond to fibrations of the  3-manifold over the unit circle (if there are any).  The Heegaard Floer homology invariant $\hfp(Y)$ is a bigraded abelian group, with one of the gradings given by elements of $H^2(Y;\bZ)$.  The support of $\hfp(Y)$ is the set of elements $\fs \in H^2(Y;\bZ)$ for which $\hfp(Y,\fs)$ is nonzero.   The convex hull of the support of $\hfp (Y)$ is a polytope in $H^2(Y;\bR)$.

The connection between the two mentioned invariants is the following: the fibred faces of the Thurston norm ball correspond bijectively to vertices $\fs$ of the $\hfp$ polytope that support $\hfp(Y,\fs)=\bZ$ \cite[Thm.\,1.1]{Ni09}.\footnote{Note that the Thurston (semi)norm of a homology class represented by a torus is zero. Hence if a manifold fibres with fibre a torus, there is no fibred face of the Thurston norm unit ball corresponding to this fibration.  Ni's proof works under the assumption that the fibre has genus greater than one.} This result generalised a theorem of Ghiggini \cite[Thm.\,1.4]{Ghi} that a genus one knot $K$ in $S^3$ is fibered if and only if another appropriate Heegaard Floer group $\hfkhat(K,1)$ is isomorphic to $\bZ$.  In summary, we have the following correspondences:
\begin{equation} \label{closed case}
Y \text{ fibres over } S^1 \overset {Thurston} \Longleftrightarrow \text{fibered face }
\overset {Ni} \Longleftrightarrow \text{ vertex } \fs \text{ with } \hfp(Y,\fs)=\bZ.
\end{equation}

Let us now consider the case of sutured manifolds. A {\it sutured manifold} $(M,\ga)$ is a cobordism $M$ between two surfaces $R_+(\ga)$ and $R_-(\ga)$ that have nonempty boundary, and a set $\ga$ of pairwise disjoint annuli and tori on $\bdd M$ such that $\ga \cup R_+(\ga) \cup R_-(\ga)=\bdd M$. (The surfaces and 3-manifold also have to satisfy certain orientability conditions).  We show that there is a correspondence between taut, depth one foliations of a sutured manifold and the polytope of {\it sutured Floer homology} developed by Juh\'asz \cite{Ju}.  Gabai defined and developed taut, finite depth foliations for sutured manifolds in the 80's.  Twenty years later, Juh\'asz \cite{Ju} extended the `hat flavour' of Heegaard Floer homology for closed 3-manifolds to (balanced) sutured manifolds.  But before we can give a statement analogous to \eqref{closed case}, we introduce the key ingredients. 

We call $\mF$ a foliation of $(M,\ga)$ if the leaves of $\mF$ are transverse to $\ga$ and tangential to $R(\ga):=\bdd M \setminus \Int(\ga)$.  For example, if $M$ is a solid torus and $\ga=\bdd M$, then the fibration of $M=S^1\times D^2 \to S^1$ given by the projection onto the first coordinate, is a depth zero foliation of $(M,\ga)$.  We only work with foliations that are smooth and transversely oriented. 

Cantwell and Conlon showed that taut depth one foliations form open, convex, polyhedral cones in $H^1(M;\bR)=H_2(M,\bdd M;\bR)$ called {\it foliation cones} \cite{CC99}.  Note that rational rays through the fibred faces of the Thurston norm unit ball also live in open, convex, polyhedral cones (which we could call {\it fibred cones}) that are subtended by the fibred faces.  So similarly to the first correspondence in \eqref{closed case}, we have
\begin{equation*}
(M,\ga) \text{ has taut depth 1 foliation } \overset {Cantwell} {\underset{Conlon} \Longleftrightarrow} \text{foliated face. }
\end{equation*}

On the other hand, sutured Floer homology associates to a balanced sutured manifold $(M,\ga)$ a finitely generated abelian group denoted by $SFH(M,\ga)$.  One of the gradings is given by a set of so called relative $\Spin^c$ structures of $(M,\ga)$, which can be identified with $H^2(M,\bdd M;\bZ)$.   Then the support of $SFH(M,\ga)$ is defined to be the set of elements $\fs \in H^2(M,\bdd M;\bZ)$ for which $SFH(M,\ga,\fs)$ is nonzero.  The convex hull of the support is the {\it sutured Floer polytope}  $P(M,\ga)$ living in $H^2(M, \bdd M;\bR)$  \cite{Jupolytope}.   

Our connection between the two theories, given in Theorem A, is that the foliation cones of $(M,\ga)$ correspond bijectively to vertices $\fs$ of $P(M,\ga)$ that support $SFH(M,\ga,\fs)=\bZ$.  Thus, with Theorem A, we have the following set of correspondences those in \eqref{closed case}:
\begin{equation} \label{open case}
\begin{split}
(M,\ga) \text{ has taut depth 1 foliation } &\overset {Cantwell} {\underset{Conlon} \Longleftrightarrow} \text{foliated face }\\
&\overset {Thm A} 
\Longleftrightarrow \text{ vertex } \fs \text{ with } SFH(M,\ga,\fs)=\bZ.
\end{split}
\end{equation}

The relationships between the old and new theories do not stop there.  In particular, Ozsv\'ath and Szab\'o showed that the Thurston norm unit ball is dual  to the polytope of $\underline{\hfhat}(Y)$, another flavour of Heegaard Floer homology with so called twisted coefficients  \cite[Thm.\,1.1]{OS genus}.  In Theorem B, we show that the foliation cones of Cantwell and Conlon are subtended by faces of the dual sutured Floer polytope.  Note that the foliation cones do not form a polytope, and this agrees with the fact that the sutured Floer polytope is defined up to translation in $H^2(M,\bdd M; \bR)$.  In the closed case there is no such ambiguity, as both the Thurston norm and and $\underline{\hfhat}(Y)$ are well-defined, that is, `centred' around the origin of their ambient vector space.

It may be helpful to bear in mind the following dictionary of terms between the closed case and the case of balanced sutured manifolds, especially if one is already familiar with the former.
\vspace{0.05cm}
\begin{center}
  \begin{tabular}{  c | c  }
  
    $Y$ closed 3-manifold & $(M,\ga)$ balanced sutured manifold \\ \hline
    Thurston norm $x$ &  geometric sutured function $y_t$  \\ 
    $Y$ fibres over $S^1$ & $(M,\ga)$ admits a taut depth one foliation \\
    fibered face of $x$ unit norm ball & foliated face of $y_t$ `unit norm ball' \\
    vertex $\fs$ with $\hfp(Y,\fs)=\bZ$ & vertex $\fs$ with $SFH(M,\ga,\fs)=\bZ$ \\
    $\underline{\hfhat}(Y)$ polytope & $SFH(M,\ga)$ polytope
  \end{tabular}
\\
\end{center}

\vspace{0.5cm}

 We have not yet talked about the {\it geometric sutured function $y_t$}, which is a map $H_2(M,\bdd M;\bR) \to \bR$, but it is easiest if we delay its introduction until Subsection \ref{subsection norms}.  The function $y_t$ can take negative values, hence it may not have a unit norm ball per se.  However, when $y_t$ is a seminorm, then its unit ball is dual to the sutured polytope $P(M,\ga)$; see Corollary \ref{yt dual to polytope} for details.   

\subsection{Statements of results}

The polytope $P(M,\ga)$ is said to have an {\it extremal $\bZ$ at $\fs$}, if $\fs$ is a vertex of the polytope  and $SFH(M,\ga,\fs)=\bZ$.  Saying that $\fs$ is {\it extremal with respect to a homology class $\al \in H_2(M,\bdd M;\bR)$} means that $\al(\fs) > \al(\ft)$ for any other vertex $\ft$ of the polytope.  Set $R(\ga):=R_+(\ga) \cup R_-(\ga)$ and $M_0:=M \setminus R(\ga)$.  If $\mF$ is a depth one foliation, then the manifold $M_0$ fibres over $S^1$ and this fibration defines an element $\la(\mF) \in H^1(M;\bR)$ (Lemma \ref{lemma fibring}).  Moreover, $\la(\mF)$ gives rise to a `foliation ray of $\mF$' in $H^1(M; \bR)=H_2(M,\bdd M;\bR)$ given by $r \cdot \left( PD \cdot \la(\mF)\right)$ for $r \in \bR^{\geq 0}$, and this foliation ray is contained in a foliation cone of Cantwell and Conlon.  Here we denote by $PD$ the Poincar\'e-Lefschetz duality map.

We can now state the first of our two main theorems.

 \begin{thmA}
Suppose $(M,\ga)$ is a strongly balanced sutured manifold with $H_2(M;\bZ)=0$, and let $P(M,\ga)$ denote its sutured polytope. Then $P(M,\ga)$ has an extremal $\bZ$ at a $\Spin^c$ structure $\fs$ if and only if there exists a taut depth one foliation $\mF$ of $(M,\ga)$ whose sole compact leaves are the connected components of $R(\ga)$ and such that $\fs$ is extremal with respect to $PD \circ \la(\mF)$.
\end{thmA}

The condition that $H_2(M; \bZ)=0$ is purely technical; it carries over from theorems about how sutured Floer homology behaves under surface decompositions.  An idea for removing this condition has been communicated to the author by S. Friedl \cite{F13}.

As we mentioned above, the Thurston norm unit ball is dual to the polytope of $\underline{\hfhat}(Y)$.  Since the Thurston norm is centrally symmetric, this duality result recovers the symmetries of $\hfhat$. However, the sutured Floer polytope is not symmetric in general; for examples see Subsection \ref{subsection examples} below and \cite[Sec.\,8]{FJR10}.  So there is no hope that $P(M,\ga)$ is the dual of a  (semi)norm unit ball. Moreover, there is no canonical identification of $\Spin^c(M,\ga)$ with $H^2(M,\bdd M;\bZ)$, so $P(M,\ga)$ is defined in $H^2(M,\bdd M;\bR)$ only up to translation.   

Nevertheless, under certain conditions (for example when $H_2(M; \bZ)=0$) we can define a {\it geometric sutured function} $y_t$ that is analogous to the Thurston norm.  This function $y_t$ is a sum of two symmetric terms and an asymmetric term that reflects the choice of identification $\Spin^c(M,\ga) \to H^2(M,\bdd M; \bZ)$ (see Subsection \ref{subsection norms}).  For those already familiar with Juh\'asz's work, $y_t$ is defined using the function $c(S,t)$ that depends on the topology of a properly embedded surface $S$ in $M$, and on the trivialisation $t$ of a particular plane bundle (see \cite[Def.\,3.16]{Jupolytope} or \eqref{equation cst} below).   However, we advertise the geometric nature of $P(M,\ga)$ explicitly by putting it side by side with other seminorms for 3-manifolds.

The preceding discussion implies that there is no obvious definition of a dual sutured polytope.  Even so, we can always obtain the {\it dual sutured cones} $Q(M,\ga)$ of the polytope.  In general, we can define the dual cones $Q$ of any polytope $P$. Let $P$ be given by vertices $v_1, \ldots, v_{n}$ living in a vector space $V$ over some field $\bF$.  The set $Q$ is a collection of polyhedral cones $Q_1, \ldots, Q_n$ in the dual space $V^*= \Hom(V, \bF)$ where
\[
Q_i:=\{v^* \in V^* : v^*(v_i) > v^*(v_j) \textrm{ for } i \neq j\}.
\]
In particular, the cones of $Q(M,\ga)$ correspond to vertices of $P(M,\ga)$.  The cones that correspond to extremal $\bZ$ vertices of $P(M,\ga)$ are referred to as the {\it extremal $\bZ$ cones} and denoted by $Q_\bZ(M,\ga)$.

Finally, we state the second main theorem.

\begin{thmB} 
Let $(M,\ga)$ be a taut, strongly balanced sutured manifold with $H_2(M)=0$.  The extremal $\bZ$ cones $Q_\bZ(M,\ga)$ are precisely the foliation cones defined by Cantwell and Conlon in \cite{CC99}.
\end{thmB}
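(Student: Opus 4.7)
By Theorem A, the map $\Phi: \mF \mapsto \fs(\mF)$ sending a taut depth one foliation of $(M,\ga)$ whose sole compact leaves are the components of $R(\ga)$ to the unique extremal $\bZ$ vertex of $P(M,\ga)$ maximizing $PD\circ\la(\mF)$ is surjective onto the set of extremal $\bZ$ vertices, and by construction $PD\circ\la(\mF) \in Q_{\Phi(\mF)}$. My strategy for Theorem B is to promote this map on individual foliations to an equality of open polyhedral cones $\mC_i = Q_{\fs_i}$ in $H^1(M;\bR) = H_2(M,\bdd M;\bR)$, where the $\mC_i$ run through the Cantwell--Conlon foliation cones.

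The first step would be to establish that $\Phi$ factors through the Cantwell--Conlon decomposition, yielding a map $\overline{\Phi}: \mC_i \mapsto \fs_i$. Whenever the rays of two taut depth one foliations $\mF, \mF'$ lie in a common cone $\mC_i$, \cite{CC99} provides a branched surface $B_i$ carrying both of them, so $\mF$ and $\mF'$ can be joined through a smooth path of taut depth one foliations with compact leaves $R(\ga)$. Such a path is a homotopy of the oriented tangent $2$-plane field, hence preserves the associated relative $\Spin^c$ structure, giving $\Phi(\mF) = \Phi(\mF') =: \fs_i$. Because $\fs_i$ is extremal with respect to $PD\circ\la(\mF)$ for every $\mF$ whose ray lies in the open convex cone $\mC_i$, linearity of the functions $\al \mapsto \al(\fs_i) - \al(\ft)$ in $\al$ immediately yields the inclusion $\mC_i \subseteq Q_{\fs_i}$. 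Surjectivity of $\overline{\Phi}$ onto the extremal $\bZ$ vertices follows from Theorem A.

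The main obstacle is the reverse inclusion $Q_{\fs_i} \subseteq \mC_i$, which also supplies injectivity of $\overline{\Phi}$. For this, I would use the branched surface $B_i$ to produce, for each rational $\al \in Q_{\fs_i}$, a taut depth one foliation $\mF_\al$ carried by $B_i$ with $PD\circ\la(\mF_\al)$ on the ray of $\al$. Concretely, one perturbs a reference foliation $\mF_i$ with ray in $\mC_i$ by reshuffling its non-compact depth one leaves within the branched-surface framework of \cite{CC99}, a procedure which realizes every rational class in a neighbourhood of $PD\circ\la(\mF_i)$ inside $\mC_i$ as a foliation ray. By the previous step $\Phi(\mF_\al) = \fs_i$, so $\al$ belongs to $\mC_i$; density of rational rays in the open cone $Q_{\fs_i}$ combined with openness of $\mC_i$ then gives $Q_{\fs_i} \subseteq \mC_i$. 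The subtlety, and the part I expect to occupy the bulk of the technical work, is ensuring that the perturbed foliations $\mF_\al$ remain taut depth one with compact leaves exactly $R(\ga)$ throughout the entire open cone $Q_{\fs_i}$ rather than only in a small neighbourhood of $PD\circ\la(\mF_i)$; this likely requires invoking the full polyhedral structure of the Cantwell--Conlon cones and their characterisation in terms of branched-surface homology.
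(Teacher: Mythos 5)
Your proposal takes a genuinely different and considerably more involved route than the paper, and as written it has concrete gaps; moreover it misses the short argument that actually does the job. The paper's proof is a lattice-point observation. By Theorem \ref{thm CCcones}, $\fC(M,\ga)$ is a finite union of disjoint open convex polyhedral cones, and the integral lattice points lying in $\fC(M,\ga)$ are exactly the classes $PD\circ\la(\mF)$ for depth one (or depth zero) foliations $\mF$. By definition, $Q_\bZ(M,\ga)$ is also a finite union of disjoint open convex polyhedral cones, and by Theorem A together with Lemma \ref{lemma decomposition} and Lemma C its integral lattice points are exactly the same set. Since an open convex rational polyhedral cone is recovered from its lattice points (the rational rays through them are dense), and since a connected open convex set lying inside a disjoint union of open sets must lie inside a single one, the two cone decompositions coincide cone by cone. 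None of the branched-surface, perturbation, or plane-field homotopy machinery you invoke is needed.

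Beyond being an inefficiency, several of your steps would fail as stated. In Step 1 you assert that two depth one foliations with rays in a common cone $\mC_i$ ``can be joined through a smooth path of taut depth one foliations with compact leaves $R(\ga)$''; this is false, because foliations corresponding to irrational rays through $\mC_i$ have all noncompact leaves dense in $M_0$ and are of infinite depth, so no such path of depth one foliations exists. If instead you only use a homotopy of (not necessarily integrable) $2$-plane fields, you then need the unstated identification of $\Phi(\mF)$ with the relative $\Spin^c$ structure of the tangent plane field $T\mF$. But $\Phi(\mF)$ is defined by Theorem A as the polytope vertex maximizing the functional $PD\circ\la(\mF)$; the theorem is purely combinatorial and says nothing about $c_1$ of $T\mF$ or $T\mF^\perp$, and the paper never uses, let alone proves, such an identification. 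Finally, your reverse inclusion $Q_{\fs_i}\subseteq\mC_i$ is circular as sketched: to conclude that a rational $\al\in Q_{\fs_i}$ lies in $\mC_i$ you posit a foliation $\mF_\al$ carried by $B_i$ with ray on $\al$, but knowing that every such $\al$ is so realized is precisely the inclusion you are trying to prove. The lattice-point argument disposes of both inclusions simultaneously without constructing any of these objects.
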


Most interesting sutured manifolds are strongly balanced, so this is not a significant restriction in Theorem B.  For a precise definition of strongly balanced see Definition \ref{deff strongly balanced}. 

Theorem B provides a connection between two areas of low-dimensional topology that use seemingly different tools.  The construction of sutured Floer homology relies on pseudoholomorphic techniques which form the basis of Heegaard Floer homology, whereas Cantwell and Conlon prove the existence of foliation cones by using foliation currents \cite{Sullivan,Schwartzmann}.

Lastly, we would like to mention a key step in our work stated in Lemma C.  In the case of a closed 3-manifold $Y$ that fibres over $S^1$, cutting $Y$ along a fibre yields a product manifold homeomorphic to $\text{fibre}\times I$.  Conversely, given an automorphism of a closed surface $\varphi \colon S \to S$, the mapping torus fibres over $S^1$ with fibre $S$.  In the case of a sutured manifold $(M,\ga)$, Lemma C states that cutting $(M,\ga)$ along a properly embedded surface $S$ gives a connected product manifold if and only if $S$ is either a fibre of a fibration $M\to S^1$ or can be made into a leaf of a depth one foliation of $(M,\ga)$ by an operation called 'spinning'.  We give the precise definitions of all the terminology used in Lemma C at appropriate places throughout Sections \ref{section sutured mflds} and \ref{section foliations}.

\begin{lemC} 
Suppose $(M,\gamma)$ is a connected sutured manifold.  Let $(M,\ga)\leadsto^S (M',\ga')$ be a surface decomposition along $S$ such that $(M',\ga')$ is taut.  Then $(M',\gamma')$ is a connected product sutured manifold
if and only if either
\begin{enumerate}
\item $R(\ga)=\emptyset$ and $S$ is the fibre of a depth zero foliation $\mF$ given by a fibration $\pi \colon M \to S^1$, \\
or
\item $R(\ga) \neq \emptyset$ and $S$ can be spun along $R(\ga)$ to be a leaf of a depth one foliation $\mF$ of $(M,\ga)$ whose sole compact leaves are the connected components of $R(\ga)$.
\end{enumerate}
Up to equivalence, all depth zero foliations of $(M,\ga)$, and all depth one foliations of $(M,\ga)$ whose sole compact leaves are the connected components of $R(\ga)$, are obtained from a surface decomposition resulting in a connected product sutured manifold.
\end{lemC}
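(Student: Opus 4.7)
The plan is to prove both directions by identifying the product structure on the decomposed manifold with a fibration of $M \setminus R(\ga)$ over $S^1$, and then extending that fibration to a foliation of $(M,\ga)$ by a spinning construction near $R(\ga)$. Throughout I will freely use the fibring statement for depth one foliations (\ref{lemma fibring}) and the identification of leaves with fibres of $M_0 \to S^1$.

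Forward direction (product $\Rightarrow$ foliation): suppose $(M',\ga') \iso \Sigma \times I$ with $R_+(\ga') = \Sigma \times \{1\}$ and $R_-(\ga') = \Sigma \times \{0\}$, and let $S_\pm \subset \Sigma \times \{\pm\}$ denote the two copies of $S$ created by the decomposition. The homeomorphism $\psi\colon S_+ \to S_-$ used to reconstruct $(M,\ga)$ from $(M',\ga')$, combined with the product coordinate $I$, exhibits $M \setminus R(\ga)$ as a mapping torus of a surface $\Sigma^\circ$ obtained from $\Sigma$ by removing (open collar neighborhoods of) the remnants of $R_\pm(\ga)$, giving a fibration $\pi\colon M \setminus R(\ga) \to S^1$. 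In case (i) we have $R(\ga) = \emptyset$ and $\pi$ is already a global fibration of $M$, yielding the required depth zero foliation. In case (ii) I would extend $\pi$ to a foliation of $M$ by spinning near each component $R$ of $R(\ga)$: in a collar $R \times [0,\ep)$, replace the product foliation $R \times \{t\}$ by the image of the leaves of $\pi$ under a reparametrisation $[0,\ep) \to \bR$ that vanishes to infinite order at $0$, so leaves wrap more and more densely and accumulate onto $R \times \{0\}$ while remaining transverse to $\ga$. The resulting $\mF$ is smooth, has $R(\ga)$ as its \emph{only} compact leaves by the mapping-torus description of $M \setminus R(\ga)$, and is depth one. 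Tautness of $\mF$ follows from the tautness of $(M',\ga')$ (every fibre of $\pi$ meets a closed transversal lifted from $S^1$).

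Reverse direction (foliation $\Rightarrow$ product): in case (i), a regular fibre $S$ of the given fibration $\pi\colon M \to S^1$ is a decomposing surface with $(M',\ga') \iso S \times I$, which is a connected product since $M$ and $S$ are connected. In case (ii), by (\ref{lemma fibring}) the restriction $\mF|_{M \setminus R(\ga)}$ is itself a fibration over $S^1$; let $L$ be a fibre. Since $\mF$ has $R(\ga)$ as compact leaves and no others, $L$ is a non-compact leaf spiralling into each component of $R(\ga)$. I would \emph{unspin} $L$: truncate each spiralling end and close it off with an arc in $\ga$, obtaining a properly embedded compact surface $S \subset M$ with $\bdd S \subset \ga$. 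Cutting $(M,\ga)$ along $S$ breaks the mapping-torus structure on $M \setminus R(\ga)$ back to a product, and reattaching the collars of $R(\ga)$ gives a sutured manifold $(M',\ga') \iso \Sigma \times I$, connected because $L$ is.

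The uniqueness/equivalence clause follows once one observes that the class $\la(\mF) \in H^1(M;\bZ)$ determines the homotopy class of the fibration $\pi\colon M \setminus R(\ga) \to S^1$, and hence the spun foliation $\mF$ up to isotopy fixing the compact leaves; conversely, two decompositions with the same $\la(\mF)$ produce isotopic fibrations of $M\setminus R(\ga)$ and hence equivalent foliations. The main obstacle is the spinning/unspinning correspondence near $R(\ga)$: in the forward direction one must check smoothness of $\mF$ and that $R(\ga)$ contains \emph{all} compact leaves (not merely some), and in the reverse direction one must check that unspinning yields a compact properly embedded surface meeting $\ga$ transversely and respecting the decomposition $\bdd M = R(\ga) \cup \ga$. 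Both require a careful local analysis in a collar of each component of $R(\ga)$, adapting Gabai's construction for closed manifolds to the sutured setting.
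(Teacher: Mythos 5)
Your overall strategy matches the paper's: the forward direction uses Gabai-style spinning near $R(\ga)$, and the reverse direction truncates a noncompact leaf. However, the hardest parts of the argument are precisely the ones you postpone in your final paragraph, and they are not routine.

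\textbf{The mapping-torus claim is imprecise.} You assert that the regluing homeomorphism $\psi\colon S_+\to S_-$ together with the $I$-coordinate of $\Sigma\times I$ ``exhibits $M\setminus R(\ga)$ as a mapping torus.'' But $\psi$ does not take a surface to itself: $S_+$ and $S_-$ are different subsurfaces of $R_+(\ga')$ and $R_-(\ga')$, each of which also contains the remnants of $R(\ga)$. There is no monodromy map of a single fiber at this stage. The fibration $M_0\to S^1$ exists, but it comes a posteriori from the foliation $\mF$ you are trying to build, not directly from $\psi$. The paper sidesteps this by constructing the foliation explicitly via Gabai's ``ditch'' construction in a collar of each component $V$ of $R(\ga)$, using the junctures $\bdd S\cap V$ as the combinatorial data that prescribes how leaves spiral.

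\textbf{The depth-one check is missing.} Your reparametrisation ``vanishing to infinite order at $0$'' makes leaves accumulate on $R(\ga)$, but it does not by itself rule out (a) a compact leaf appearing in the interior, or (b) one noncompact leaf being asymptotic to another noncompact leaf, either of which would break the depth-one conclusion. This is the substance of Lemma \ref{lemma no conditions}: ruling out (a) uses the fact that a compact interior leaf would force $\abs{R(\ga')}>2$, contradicting productness, and ruling out (b) requires the holonomy-free gluing \eqref{shift one} and a concrete estimate of the form $\min\{\abs{s+m-t-n}:m,n\in\bZ\}>0$ showing that distinct noncompact leaves stay apart. Your construction does not make the no-holonomy condition explicit, so the depth-one claim is unverified. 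Likewise, the transversality of leaves to $\ga$ near the corners $\bdd\ga$ needs to be arranged explicitly; the paper handles this as part of the ditch-filling gluing, whereas your collar-reparametrisation description does not address the corner behavior at all.

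\textbf{The reverse direction is closer but still skips a step.} Truncating ends ``and closing off with an arc in $\ga$'' is the right idea, but to get an honest decomposing surface $S$ satisfying Definition \ref{def decomp}, and to verify that the cut manifold really is $(\overline L\times I,\,\bdd\overline L\times I)$ with the correct sutures, one has to cut along a specific spiral-staircase neighbourhood $\mN_V$ built from a juncture and the transverse flow $\mL$, and then check the orientations carefully when computing $\ga'$. The paper does this in Lemma \ref{lemma Gabai's truncation}. Your sketch is morally correct but would not survive a reader asking for the precise sutured-manifold structure on the cut piece. In short: same architecture as the paper, but the spinning/unspinning analysis near $R(\ga)$ --- which you correctly flag as ``the main obstacle'' --- is where all the content lives, and it is not carried out.
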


\begin{rmk}
The gist of Lemma C is known to experts \cite{Conlon private, Gabai private}, but the author was unable to find any written references.  Cantwell and Conlon are preparing a paper exploring the relationship of sutured manifold decomposition and foliations from the perspective of staircases and junctures \cite{CCSmooth} that will include a proof of Lemma C.  Below we give our own proof of Lemma C, together with the minimal necessary background from foliation theory.
\end{rmk}

\subsection{Organisation of the article} The article is organised as follows.  In Section \ref{section sutured mflds}, we give the background on the sutured Floer polytope and surface decompositions, followed by a survey of  seminorms on the homology of 3-manifolds.    In Section \ref{section foliations}, we describe Gabai's construction of depth one foliations, followed by the theory of junctures and spiral staircases of foliations, and an introduction to Cantwell and Conlon's foliation cones.  Finally, in Section \ref{section duality} we prove Lemma C, Theorem A and Theorem B (in that order), and we present a few examples that illustrate the duality between foliation cones and the sutured Floer polytope.  Notational conventions are explained at the beginning of Section \ref{section sutured mflds}.

\subsection{Acknowledgements} I would like to thank my Ph.D. advisers Stefan Friedl and Andr\'as Juh\'asz for suggesting this project to me, as well as for many helpful discussions and suggestions.  Many thanks to Lawrence Conlon for explaining aspects of depth one foliation theory, in particular, the concepts of spiral staircase and junctures, and for his insightful correspondence on foliation cones. I thank my Ph.D. adviser Saul Schleimer for his interest in my work.

I owe much gratitude to the University of Warwick and the Warwick Mathematics Institute for generously supporting me through a Warwick Postgraduate Research Scholarship, as well as to the Department for Pure Mathematics and Mathematical Statistics in Cambridge for their hospitality.

\section{Sutured manifolds} \label{section sutured mflds}

This section gives the background to understanding the sutured Floer homology side of the duality statement.  We begin with the basic definitions of sutured manifolds and the operation of surface decomposition \cite{Gabai}.  This is followed by a brief review of sutured Floer homology and relative $\Spin^c$ structures \cite{Ju}, as well as the definition of the sutured Floer polytope \cite{Jupolytope}.  Then there is a description of how sutured Floer homology behaves under  decompositions along ``well-behaved'' surfaces  \cite{Jusurface}. The section ends with a survey of seminorms for 3-manifolds: the Thurston norm \cite{Thurston norm}, the generalised Thurston norm \cite{Scharlemann}, the sutured Thurston norm \cite{CC sutured Thurston norm},  the sutured seminorm \cite{Jupolytope}, and finally what we call the geometric sutured function.

\subsection{Notation}    If two topological spaces $W$ and $X$ are homeomorphic, we write $W \cong X$.  If $U$ is an open set in $X$, then $\overline U$ denotes the closure of $U$ in the topology of $X$.    Denote by $\abs X$ the number of connected components of $X$.

All homology groups are assumed to be given with $\bZ$ coefficients unless otherwise stated.  Let $M$ be a $n$-manifold with boundary.  Then $PD$ denotes the Poincar\'e-Lefschetz duality map $H_*(M,\bdd M)  \to H^{n-*}(M)$.  As this map is an isomorphism, we simplify the notation and also call the inverse map $PD$; it is obvious from the context to which map we are referring. 

Let $M$ be an $n$-manifold, and $L \subset M$ a codimension-1 submanifold.  Then a tubular neighbourhood of $L$, denoted by $N(L)$, is often parametrized as $L \times (0,1)$.  We write $L \times s$ to mean $L \times \{s\}:=\{(x,s) \in M: x \in L\}$ for some $s \in [0,1]$.  Similarly, in general, when $J \times (a,b) \times (c,d)$ is the parametrisation of a tubular neighbourhood of a codimension-2 submanifold $J \subset M$, we write $J \times s \times t$ or $(J,s,t)$ to mean the codimension-2 manifold $J \times \{ s \} \times \{t\}$, where $(s,t) \in [a,b] \times [c,d]$.

\subsection{Sutured manifolds} \label{subsection sutured manifolds}

Sutured manifolds were defined by Gabai \cite[Def.\,2.6]{Gabai}.

\begin{deff}
A {\it sutured manifold} $(M, \gamma)$ is a compact oriented 3-manifold $M$ with boundary together with a set $\gamma \subset \bdd M$ of pairwise disjoint annuli $A(\gamma)$ and tori $T(\ga)$. Furthermore, in the interior of each component of $A(\ga)$ one fixes a suture, that is, a homologically nontrivial oriented simple closed curve. We denote the union of the sutures by $s(\ga)$.

Finally, every component of $R(\gamma): = \bdd M \setminus \Int(\ga)$ is oriented. Define $R_+(\ga)$ (or $R_-(\ga)$) to be those components of $\bdd M \setminus \Int(\ga)$ whose normal vectors point out of (into) $M$. The orientation on $R(\ga)$ must be coherent with respect to $s(\ga)$, that is, if $\de$ is a component of $\bdd R(\ga)$ and is given the boundary orientation, then $\de$ must represent the same homology class in $H_1(\ga)$ as some suture.
\end{deff}

Sutured Floer homology is defined on a wide subclass of sutured manifolds called balanced sutured manifolds, whereas the sutured Floer polytope is defined for strongly balanced sutured manifolds.

\begin{deff} \cite[Def.\,2.2]{Ju}
A {\it balanced} sutured manifold is a sutured manifold $(M, \ga)$ such that $M$ has no closed components, the equality $\chi(R_+(\ga)) = \chi(R_-(\ga))$ holds, and the map $\pi_0(A(\ga)) \to \pi_0(\bdd M)$ is surjective.
\end{deff}

\begin{deff} \cite[Def.\,3.5]{Jusurface} \label{deff strongly balanced}
A {\it strongly balanced} sutured manifold is a balanced sutured manifold $(M, \ga)$ such that for every component $F$ of $\bdd M$ the equality, $\chi(F \cap R_+(\ga)) = \chi(F \cap R_-(\ga))$ holds.
\end{deff}

The most frequently studied and most ``interesting'' examples of sutured manifolds are all strongly balanced.  A trivial example is the {\it product sutured manifold} given by $(\Si \times I, \bdd \Si \times I)$ where $\Si$ is a surface with boundary and with no closed components.  

\begin{convention}
For the purposes of this paper a product sutured manifold is always assumed to be connected.
\end{convention}

 Other simple examples are obtained from any closed, connected 3-manifold by removing a finite number of 3-balls and adding one trivial suture to each spherical boundary component.  Less trivial examples are those of link complements in closed 3-manifolds with sutures consisting of an even number of $(p,q)$-curves on the toroidal components, as well as, the complements of surfaces in closed 3-manifolds, endowed with sutures derived from the boundary of the surface (e.g. the complement of a Seifert surface of a knot)\footnote{More precisely, one does not take the complement of surface $\Si$ in a closed 3-manifold $Y$, but instead the complement of a double collar neighbourhood $\Si \times (-1,1)$ of $\Si$.  Then the sutures are the curves corresponding to $\bdd S \times \{0\}$ on $\bdd \left(Y \setminus \Si \times (-1,1) \right)$.}.

\begin{deff}
A sutured manifold $(M,\ga)$ is said to be {\it taut} if $M$ is irreducible, and $R(\ga)$ is incompressible and Thurston-norm minimising in $H_2(M,\ga)$.
\end{deff}

Lastly, we define the operation of decomposing sutured manifolds into simpler pieces which was introduced by Gabai \cite[Def.\,3.1]{Gabai}.  

\begin{deff} \label{def decomp}
Let $(M,\ga)$ be a sutured manifold and $S$ a properly embedded surface in $M$ such that for every component $\la$ of $S \cap \ga$ one of (i)--(iii) holds:
\begin{enumerate}
\item $\la$ is a properly embedded non-separating arc in $\ga$.
\item $\la$ is  simple closed curve in an annular component $A$ of $\ga$ in the same homology class as $A \cap s(\ga)$.
\item $\la$ is a homotopically nontrivial curve in a toroidal component $T$ of $\ga$, and if $\de$ is another component of $T \cap S$, then $\la$ and $\de$ represent the same homology class in $H_1(T)$.
\end{enumerate}
Then $S$ defines a {\it sutured manifold decomposition} 
\[
(M,\ga) \leadsto^S (M',\ga'),
\]
where $M':=M \setminus \Int(N(S))$ and 
\begin{align*}
\ga': & = (\ga \cap M') \cup N(S_+' \cap R_-(\ga)) \cap N(S'_- \cap R_+(\ga)),\\
R_+(\ga'): & = ((R_+(\ga) \cap M') \cup S_+') \setminus \Int(\ga'), \\
R_-(\ga'): & = ((R_-(\ga) \cap M') \cup S_-') \setminus \Int(\ga'),
\end{align*}
where $S_+'$ ($S'_-$) are the components of $\bdd N(S) \cap M'$ whose normal vector points out of (into) $M$.
\end{deff}

The manifolds $S_+$ and $S_-$ are defined in the obvious way as copies of $S$ embedded in $\bdd M'$ that are obtained by cutting $M$ along $S$.

\subsection{Sutured Floer homology and the sutured Floer polytope} \label{subsection polytope}

First of all, here is some background on $\Spin^c$ structures.  The following definition of relative $\Spin^c$ structures originates from Turaev's work \cite{Tu90}, but in the current phrasing comes from \cite{Ju}.  For proofs and details we refer to Juh\'asz's papers \cite{Ju,Jusurface} and \cite{Jupolytope}.

Fix a Riemannian metric on $(M,\ga)$.  Let $v_0$ denote a nonsingular vector field on $\bdd M$ that points  into $M$ on $R_-(\ga)$ and out of $M$ on $R_+(\ga)$, and that is equal to the gradient of a height function $s(\ga) \times I \to I$ on $\ga$. The space of such vector fields is contractible.

A relative $\Spin^c$ structure is defined to be a {\it homology class} of vector fields $v$ on $M$ such that $v|{\bdd M}$ is equal to $v_0$.  Here two vector fields $v$ and $w$ are said to be {\it homologous} if there exists an open ball $B \subset \Int(M)$ such that $v$ and $w$ are homotopic through nonsingular vector fields on $M \setminus B$ relative to the boundary.  There is a free and transitive action of $H_1(M)=H^2(M,\bdd M)$ on $\Spin^c(M,\ga)$ given by {\it Reeb turbularization} \cite[p.\,639]{Tu90}.  This action makes the set $\Spin^c(M,\ga)$ into an $H_1(M)$-torsor.  From now on, we refer to a map $\iota \colon \Spin^c(M,\ga) \to H_1(M)$ as an  {\it identification} of the two sets if $\iota$ is an $H_1(M)$-equivariant bijection.  Note that $\iota$ is completely defined by which element $\fs \in \Spin^c(M,\ga)$ it sends to $0 \in H_1(M)$ (or any other fixed element of $H_1(M)$).

The perpendicular two-plane field $v_0^\perp$ is trivial on $\bdd M$ if and only if $(M,\ga)$ is strongly balanced \cite[Prop.\,3.4]{Jusurface}.  Suppose that $(M,\ga)$ is strongly balanced.  Define $T(M,\ga)$ to be the set of trivialisations of $v_0^\perp$.  Let $t \in T(M,\ga)$.  Then there is a map dependent on the choice of trivialisation,
\[
c_1(\cdot, t) \colon \Spin^c(M,\ga) \to H^2(M,\bdd M),
\]
where $c_1(\fs,t)$ is defined to be the relative Euler class of the vector bundle $v^\perp \to M$ with respect to a partial section coming from a trivialisation $t$.  So $c_1(\fs,t)$ is the first obstruction to extending the trivialisation $t$ of $v_0^\perp$ to a trivialisation of $v^\perp$.  Here $v$ is a vector field on $M$ representing the homology class $\fs$.

Sutured Floer homology associates to a given balanced sutured manifold $(M,\ga)$ a finitely generated bigraded abelian group denoted by $SFH(M,\ga)$.  The group $SFH(M,\ga)$ is graded by the relative $\Spin^c$ structures $\fs \in \Spin^c(M,\ga)$, and has a relative $\bZ_2$ grading. In particular, for each $\fs \in \Spin^c(M,\ga)$ there is a well-defined abelian group $SFH(M,\ga,\fs)$ \cite{Ju}, and the direct sum of these groups forms the {\it sutured Floer homology} of $(M,\ga)$.  That is,
\[
SFH(M,\ga):=\bigoplus_{\fs \in \Spin^c(M,\ga)} SFH(M,\ga,\fs).
\]

A very important property of sutured Floer homology is that it detects the product sutured manifold.  In the following theorem we put together \cite[Prop.\,9.4]{Ju} and \cite[Thm.\,9.7]{Jusurface}.

\begin{thm} \label{thm product}
An irreducible balanced sutured manifold $(M,\ga)$ is a product manifold if and only if $SFH(M,\ga)=\bZ$.
\end{thm}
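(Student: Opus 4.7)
The plan is to prove each direction of the biconditional separately. The forward implication is a direct Heegaard-diagram computation, while the backward implication will use Gabai's sutured manifold hierarchy machinery combined with Juh\'asz's surface decomposition formula for sutured Floer homology.

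For the forward direction, suppose $(M,\ga)$ is a product $(\Sigma \times I,\, \bdd \Sigma \times I)$. I would construct an explicit sutured Heegaard diagram for $(M,\ga)$ with exactly one generator. Concretely, one can take a system of properly embedded disjoint arcs in $\Sigma$ cutting it into a single disk and build a sutured Heegaard diagram in which the $\alpha$- and $\beta$-arcs are small perturbations of each other producing a unique transverse intersection tuple. Since the resulting chain complex has only one generator and no non-trivial holomorphic disks, $SFH(M,\ga)=\bZ$.

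For the backward direction, suppose $SFH(M,\ga)=\bZ$. Because $SFH(M,\ga)\ne 0$ and $M$ is irreducible, Juh\'asz's tautness detection theorem implies $(M,\ga)$ is taut. By Gabai's theorem there is a sutured manifold hierarchy
\[
(M,\ga)=(M_0,\ga_0)\leadsto^{S_1}(M_1,\ga_1)\leadsto^{S_2}\cdots\leadsto^{S_n}(M_n,\ga_n),
\]
in which each $(M_i,\ga_i)$ is taut and $(M_n,\ga_n)$ is a disjoint union of product sutured manifolds. The $S_i$ can be chosen well-groomed, so Juh\'asz's decomposition formula applies at each stage and yields
\[
SFH(M_i,\ga_i)\;\cong\;\bigoplus_{\fs\in O_{S_i}} SFH(M_{i-1},\ga_{i-1},\fs),
\]
where $O_{S_i}\subseteq\Spin^c(M_{i-1},\ga_{i-1})$ denotes the outer relative $\Spin^c$ structures. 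Hence each $SFH(M_i,\ga_i)$ is a direct summand of $SFH(M_{i-1},\ga_{i-1})$. Starting from $SFH(M,\ga)=\bZ$ and using tautness at every stage (which forces $SFH(M_i,\ga_i)\ne 0$), an induction shows the rank remains $1$ throughout; in particular $SFH(M_n,\ga_n)=\bZ$, and so $(M_n,\ga_n)$ must be a single connected product rather than a disjoint union.

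The hard part will be lifting this conclusion back up the hierarchy to show that $(M,\ga)$ itself, and not merely the final stage, is a product. My plan is to arrange the hierarchy so that each $S_i$ is a product-disk or product-annulus reduction whenever one exists; such reductions are reversible, meaning that if the decomposed piece is a product then the original was too. For any residual step that is not of this form, I would argue that a non-trivial well-groomed decomposition of a taut, non-product manifold forces the support of $SFH$ to occupy at least two distinct relative $\Spin^c$ classes on the decomposed piece, contradicting the rank-one condition propagated through the induction. Combining these two observations eliminates every non-trivial step and identifies $(M,\ga)$ with a product sutured manifold.
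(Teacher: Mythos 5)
The paper does not actually prove this statement; it records it by combining two citations, namely Juh\'asz's \cite[Prop.\,9.4]{Ju} (a product sutured manifold has $SFH\cong\bZ$) and \cite[Thm.\,9.7]{Jusurface} (the converse, for irreducible manifolds). So there is no in-paper argument to match your attempt against. Judged on its own, your proposal correctly reproduces the easy direction (the Heegaard diagram with a single generator and no differential), and the framework you lay out for the converse — tautness detection, Gabai's hierarchy, the decomposition formula $SFH(M',\ga')\cong\bigoplus_{\fs\in O_S}SFH(M,\ga,\fs)$, and rank propagation — is the right scaffolding.

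The gap is in the sentence you yourself flag as ``the hard part.'' The assertion that \emph{a non-trivial well-groomed decomposition of a taut non-product manifold forces the support of $SFH$ to occupy at least two distinct relative $\Spin^c$ classes} is not a loose end to be tidied; it \emph{is} the content of Juh\'asz's Theorem 9.7, and it requires genuine sutured-manifold topology, not just bookkeeping with the decomposition formula. The standard route is to show (using Gabai/Scharlemann-style arguments) that a taut non-product $(M,\ga)$ admits a decomposing surface $S$ for which both $(M,\ga)\leadsto^{S}$ and $(M,\ga)\leadsto^{-S}$ give taut, hence $SFH$-nontrivial, pieces, and then to show that the corresponding outer sets $O_S$ and $O_{-S}$ meet the support of $SFH(M,\ga)$ in disjoint places, which forces rank $\geq 2$. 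None of that is argued here. You would also need to address the openness/niceness hypothesis (and the role of $H_2(M)$) required by the decomposition theorem. Finally, a small but real error: $SFH(M_n,\ga_n)\cong\bZ$ does \emph{not} force $(M_n,\ga_n)$ to be connected, since sutured Floer homology of a disjoint union is the tensor product and a disjoint union of product pieces still gives $\bZ\otimes\cdots\otimes\bZ\cong\bZ$. This does not sink your plan, but the justification as written is wrong. In short, the outline is reasonable but the crux — producing two $\Spin^c$ classes in the support from non-productness — is left unproved, and it is precisely the step that makes the theorem non-trivial.
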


Note that the statement would be false without the word irreducible: if  $P(1)$ is the Poincar\'e homology sphere with a 3-ball removed and a single suture along the the spherical boundary, then $SFH(P(1))=\bZ$ \cite[Rmk.\,9.5]{Ju}, but $P(1)$ is not a product (and not irreducible by definition).  

We now have all the ingredients required to define the sutured Floer polytope.  Let $S(M,\ga)$ be the {\it support} of the sutured Floer homology of $(M,\ga)$.  That is,
\[
S(M,\ga):=\{ \fs \in \Spin^c(M,\ga) \colon SFH(M,\ga,\fs)\neq 0\}.
\]
Consider the map $i \colon H^2(M,\bdd M;\bZ) \to H^2(M,\bdd M;\bR)$ induced by the inclusion $\bZ \hookrightarrow \bR$.  For $t$ a trivialisation of $v_0^\perp$, define
\[
C(M,\ga,t):=\{ i \circ c_1(\fs,t) : \fs \in S(M,\ga)\} \subset H^2(M,\bdd M;\bR).
\]
\begin{deff}
The {\it sutured Floer polytope} $P(M,\ga,t)$ with respect to $t$ is defined to be the convex hull of $C(M,\ga,t)$.
\end{deff}

Next, we have that $c_1(\fs,t_1)-c_1(\fs,t_2)$ is an element of $H^2(M,\bdd M)$ dependent only on the trivialisations $t_1$ and $t_2$ \cite[Lem.\,3.11]{Jupolytope}, and therefore we may write $P(M,\ga)$ to mean the polytope in $H^2(M,\bdd M;\bR)$ up to translation. 

 It is important to note that $c_1$ ``doubles the distances.''  Namely, for a fixed trivialisation $t$ and $\fs_1,\fs_2 \in \Spin^c(M,\ga)$, Lemma 3.13 of \cite{Jupolytope} says that
 \[
 c_1(\fs_1,t) - c_1(\fs_2,t)=2(\fs_1- \fs_2),
 \]  
 where $\fs_1-\fs_2$ is the unique element $h \in H^2(M,\bdd M)$ such that $\fs_1=h + \fs_2$.  Such an element exists and is unique by definition of a $H^2(M,\bdd M)$-torsor.
 
Let $t \in T(M,\ga)$.  Then an element $\al \in H_2(M,\bdd M;\bR)$ defines subsets $P_\al(M,\ga,t)$ and $C_\al(M,\ga,t)$ of $P(M,\ga,t)$ and $C(M,\ga,t)$, respectively \cite[p.17]{Jupolytope}.  Firstly, set 
\begin{equation} \label{equation calt}
c(\al,t):=\min \{ \langle c, \al \rangle : c \in P(M,\ga,t) \}.
\end{equation}
Then there is a subset $H_\al \subset H^2(M,\bdd M;\bR)$ given by
\[
H_\al :=\{ x \in H^2(M,\bdd M;\bR) : \langle x, \al \rangle=c(\al,t)\}.
\]
Lastly,
\begin{gather} 
P_\al(M,\ga,t):=H_\al \cap P(M,\ga,t), \text{ and }
C_\al(M,\ga,t):=H_\al \cap C(M,\ga,t), \\ \label{equation SFH alpha}
SFH_\al(M,\ga):=\bigoplus \{ SFH(M,\ga,\fs) : i(c_1(\fs,t)) \in C_\al(M,\ga,t)\}. 
\end{gather}

For an explanation of the types of well-behaved surfaces mentioned in the last part of this subsection see Definitions \ref{def groomed} and \ref{def nice}.

If $(M,\ga)$ is taut and strongly balanced, then $P_\al(M,\ga,t)$ is the convex hull of $C_\al(M,\ga,t)$ and it is a face of the polytope $P(M,\ga,t)$.  Furthermore, if $S$ is a nice decomposing surface that gives a taut decomposition $(M,\ga) \leadsto^S (M',\ga')$ and $[S]=\al$, then $SFH(M',\ga')=SFH_\al(M,\ga)$ \cite[Prop.\,4.12]{Jupolytope}. 

We conclude this subsection with the following theorem.

\begin{thm} \cite[Cor.\,4.15]{Jupolytope} \label{thm decomposition}
Let $(M,\ga)$ be a taut balanced sutured manifold, and suppose that $H_2(M)=0$.  Then the following two statements hold.
\begin{enumerate}
\item For every $\al \in H_2(M,\bdd M)$, there exists a groomed surface decomposition\linebreak $(M,\ga) \leadsto^S (M',\ga')$ such that $(M',\ga')$ is taut, $[S]=\al$, and 
\[
SFH(M',\ga') \cong SFH_\al(M,\ga).
\]
If moreover, $\al$ is well-groomed, then $S$ can be chosen to be well-groomed.
\item For every face $F$ of $P(M,\ga,t)$, there exists an $\al \in H_2(M,\bdd M)$ such that $F=P_\al(M,\ga,t)$.
\end{enumerate}
\end{thm}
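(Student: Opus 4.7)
The plan is to address the two statements separately.

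For part (i), given $\al \in H_2(M,\bdd M)$, I would first appeal to Gabai's existence theorem in \cite{Gabai}: because $(M,\ga)$ is taut and $H_2(M)=0$, there is a \emph{groomed} properly embedded surface $S$ with $[S]=\al$ whose decomposition $(M,\ga) \leadsto^S (M',\ga')$ is taut; moreover if $\al$ is well-groomed then $S$ can be chosen well-groomed. The next step is to modify $S$ along $\bdd M$ to promote it to a \emph{nice} surface in the sense required by Juh\'asz's surface decomposition formula for $SFH$, arranging the modification to be local on $\bdd M$ (pairing off arcs hitting toroidal sutures, cleaning up intersections with annular sutures, and eliminating stray closed components) while preserving the homology class $[S]$, tautness of the decomposition, and the (well-)groomed status of $S$. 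Finally, the surface decomposition formula for nice surfaces \cite[Prop.\,4.12]{Jupolytope} then yields
\[
SFH(M',\ga') \cong SFH_\al(M,\ga).
\]

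For part (ii), the polytope $P(M,\ga,t) \subset H^2(M,\bdd M;\bR)$ is the convex hull of the finite set $C(M,\ga,t)$, whose points lie on a rational lattice (indeed a half-lattice, after accounting for the doubling property $c_1(\fs_1,t)-c_1(\fs_2,t)=2(\fs_1-\fs_2)$). Any face $F$ of a convex polytope is the locus where some linear functional on the ambient vector space attains its minimum over $P$; the set of linear functionals selecting exactly $F$ forms the relative interior of a polyhedral cone in the dual space $H_2(M,\bdd M;\bR)$. Since $P(M,\ga,t)$ is rational, this cone is rational and therefore contains an integral point $\al \in H_2(M,\bdd M)$. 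For any such $\al$, definition \eqref{equation calt} gives $F = P_\al(M,\ga,t)$ immediately.

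The principal obstacle is the nice-surface upgrade in part (i): one must verify that the combinatorial alterations of $S$ along $\bdd M$ preserve $[S]$, the tautness of $(M',\ga')$, the (well-)groomed property, and bring $S$ into the class of surfaces to which the surface decomposition formula applies. Once that is accomplished, part (ii) reduces to a well-known fact of rational convex geometry and is essentially a formal consequence of the definition of $P_\al(M,\ga,t)$.
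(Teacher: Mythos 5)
Note first that the paper does not supply its own proof of this statement: it is cited verbatim as \cite[Cor.\,4.15]{Jupolytope}, so there is no in-paper argument to compare against. Judging your reconstruction against what Juh\'asz actually does, the outline is correct: for (i), start from Gabai's existence of a (well-)groomed $S$ with $[S]=\al$ giving a taut decomposition, then upgrade to a nice surface and invoke the surface decomposition formula \cite[Prop.\,4.12]{Jupolytope}; for (ii), observe that $P(M,\ga,t)$ is a rational polytope, so the normal cone of any face contains an integral class $\al$, and then $F=P_\al(M,\ga,t)$ is immediate from \eqref{equation calt}. Part (ii) as you wrote it is essentially complete.

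The one place you are glossing over a real issue is exactly where the hypothesis $H_2(M)=0$ enters. You list ``eliminating stray closed components'' as one of several local boundary moves, but it is neither local near $\bdd M$ nor obviously harmless. A closed component $T$ of $S$ has $[T]\in H_2(M)$, and only because $H_2(M)=0$ is $T$ nullhomologous, so that discarding it leaves $[S]=\al$ unchanged; you should say this explicitly, since otherwise the upgrade could change the homology class. More seriously, passing from $S$ to $S\setminus T$ changes the decomposed manifold $(M',\ga')$, and it is not automatic that the new decomposition is still taut, nor that the (well-)groomed status survives. Perturbing $\bdd S$ into generic position to make it nice in the sense of Definition \ref{def nice} really is a small isotopy and is harmless, but the closed-component step is a different kind of modification and needs its own justification (this is precisely why the $H_2(M)=0$ hypothesis is in the statement at all, as the paper's own remark after the definition of $y_t$ hints). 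As written, your proposal identifies this as ``the principal obstacle'' but does not resolve it; to make the proof complete you would need either to argue that a groomed representative can be chosen open from the start under $H_2(M)=0$, or to show that deleting nullhomologous closed components of $S$ preserves tautness of the resulting decomposition.
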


\subsection{Well-behaved surfaces} \label{subsection well-behaved surfaces}
The result of decomposition along some surfaces can be described more easily than along others.  Here we summarise the different types of surfaces, groomed, well-groomed, and nice, as well as how sutured Floer homology behaves under decomposition.

 Two parallel curves or arcs $\la_1$ and $\la_2$ in a surface $S$ are said to be {\it coherently oriented} if $[\la_1]=[\la_2] \in H_1(S,\bdd S)$.

\begin{deff} \cite[Def.\,0.2]{Gabai II} \label{def groomed}
If $(M,\ga)$ is a balanced sutured manifold then a surface decomposition $(M,\ga) \leadsto^S (M',\ga')$ is called {\it groomed} if for each component $V$ of $R(\ga)$ one of the following is true:
\begin{enumerate}
\item $S \cap V$ is a union of parallel, coherently oriented, nonseparating closed curves,
\item $S \cap V$ is a union of arcs such that for each component $\de$ of $\bdd V$ we have \linebreak $\abs{\de \cap \bdd S}=\abs{\langle \de,\bdd S \rangle}$. 
\end{enumerate}
A groomed surface is called {\it well-groomed} if for each component $V$ of $R(\ga)$ it holds that $S \cap V$ is a union of parallel, coherently oriented, nonseparating closed curves or arcs.
\end{deff}

In order to define a {\it nice} surface, we need the following definition. A curve $C$ is {\it boundary-coherent} if either $[C] \neq 0$ in $H_1(R;\bZ)$, or $[C]=0$ in $H_1(R;\bZ)$ and $C$ is oriented as the boundary of the component of $R-C$ that is disjoint from $\bdd R$.

\begin{deff} \cite[Def.\,3.22]{Jupolytope} \label{def nice}
A decomposing surface $S$ in $(M,\ga)$ is called {\it nice} if $S$ is open, $v_0$ is nowhere parallel to the normal vector field of $S$, and for each component $V$ of $R(\ga)$ the set of closed components of $S \cap V$ consists of parallel, coherently oriented, and boundary-coherent simple closed curves.
\end{deff}

An important observation is that any open and groomed surface can be made into a nice surface by a small perturbation which places its boundary into a generic position.

Finally we set the groundwork for the notion of ``extremal $\Spin^c$ structure'' given in Definition \ref{deff extremal}.

\begin{deff} \cite[Def.\,1.1]{Jusurface} \label{def outer}
Let $(M,\ga)$ be a balanced sutured manifold and let $(S,\bdd S) \subset (M,\bdd M)$ be a properly embedded oriented surface.  An element $\fs \in \Spin^c(M,\ga)$ is called {\it outer with respect to $S$} if there is a unit vector field $v$ on $M$ whose homology class is $\fs$ and $v_p \neq (-\nu_S)_p$ for every $p \in S$.  Here $\nu_S$ is the unit normal vector field of $S$ with respect to some Riemannian metric on $M$.  Let $O_S$ denote the set of outer $\Spin^c$ structures.
\end{deff}

\begin{thm} \cite[Thm.\,1.3]{Jusurface} \label{thm nice}
Let $(M,\ga)$ be a balanced sutured manifold and let $(M,\ga) \leadsto^S (M',\ga')$ be a sutured manifold decomposition along a nice surface $S$.  Then 
\[
SFH(M',\ga')=\bigoplus_{\fs \in O_S} SFH(M,\ga).
\]
In particular, if $O_S$ contains a single $\Spin^c$ structure $\fs$ such that $SFH(M,\ga)\neq 0$, then
\[
SFH(M',\ga')=SFH(M,\ga,\fs).
\]
\end{thm}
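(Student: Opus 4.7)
The plan is to prove this by reducing the statement to a comparison of chain complexes coming from cleverly chosen Heegaard diagrams. The starting point is the observation that both sides are invariants of balanced sutured manifolds computable from any admissible sutured Heegaard diagram, so the strategy is to produce a single diagram that simultaneously computes $SFH(M,\ga)$ and, after a combinatorial modification, computes $SFH(M',\ga')$.

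First I would construct a \emph{surface diagram} adapted to the nice decomposing surface $S$. Concretely, I would show that, after an ambient isotopy of $S$ rel $\bdd S$, one can find a sutured Heegaard diagram $(\Sigma,\hal,\hbe)$ of $(M,\ga)$ together with a subsurface $P \subset \Sigma$ such that $S$ is obtained by gluing $P$ to compressing disks bounded by the $\al$- and $\be$-curves that meet $\bdd P$. The niceness hypothesis (that $v_0$ is never parallel to $-\nu_S$ along $\bdd M$, and that the closed components of $S \cap R(\ga)$ are coherently oriented, parallel and boundary-coherent) is precisely what one needs so that $S$ intersects the standard Morse-theoretic handle decomposition in a controlled way: the boundary arcs of $S \cap R(\ga)$ can be lifted to transverse intersections with the $\al\cup\be$ curves, and the closed components become parallel copies inside $P$ without creating index obstructions. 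The existence of such an adapted diagram is the bulk of the technical work; I expect this to be the main obstacle, since one needs to perturb $S$ while keeping the vector field data intact, and one must verify admissibility after the perturbation.

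Next I would identify the generators $\mathbf{x}$ of $CF(\Sigma,\hal,\hbe)$ satisfying $\fs(\mathbf{x}) \in O_S$ with the generators of $CF$ of the cut diagram. A generator is a tuple of intersection points $x_i \in \al_i \cap \be_{\sigma(i)}$, and to each generator one associates a $\Spin^c$ structure via a vector field constructed from the gradient flow of a compatible Morse function. A point of $P$ contributes a \emph{local} constraint to this vector field, and a direct geometric argument shows that $\fs(\mathbf{x})$ admits a representative transverse to $-\nu_S$ exactly when all intersection points of $\mathbf{x}$ lying over $\bdd P$ sit on one prescribed side; call these the \emph{outer generators}. This is the combinatorial incarnation of outer $\Spin^c$ structures, and its proof is a homology computation using the description of $\Spin^c(M,\ga)$ as homology classes of nonsingular vector fields and the Reeb turbularization action.

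Then I would show that cutting the diagram along the $\al$- and $\be$-arcs of $\bdd P$ (and deleting the "wrong side" half of $P$) yields an admissible sutured Heegaard diagram for $(M',\ga')$, and that its generators are in canonical bijection with the outer generators identified in the previous step. Finally, to conclude the chain-level isomorphism, I would argue that every Whitney disk contributing to the differential in the cut diagram lifts to a Whitney disk in $(\Sigma,\hal,\hbe)$ that stays on the outer side of $\bdd P$, and conversely. This is done by a maximum-principle style argument: a holomorphic disk meeting $\bdd P$ transversely contributes to the boundary of the moduli space in a way incompatible with the outer condition, so disks between outer generators never cross $\bdd P$ and the moduli spaces match. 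Passing to homology produces the desired splitting $SFH(M',\ga') \iso \bigoplus_{\fs \in O_S} SFH(M,\ga,\fs)$, from which the second statement in the theorem is immediate.
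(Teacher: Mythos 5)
This theorem is not proved in the present paper at all: it is quoted as Theorem~1.3 of Juh\'asz's \emph{Floer homology and surface decomposition} (Geom.~Topol.~12 (2008), 299--350) and used as a black box, so there is no internal proof to compare your sketch against. What you have written is a reasonable high-level reconstruction of Juh\'asz's own argument in the cited paper: he indeed introduces the notion of a balanced (sutured Heegaard) diagram \emph{adapted to} the decomposing surface $S$, with a subsurface $P \subset \Sigma$ playing exactly the role you describe, proves that a nice decomposing surface admits such an adapted admissible diagram after isotopy, identifies the generators lying in outer $\Spin^c$ classes with the generators of the diagram obtained by cutting along $\bdd P$, and then matches the differentials.

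One mechanism in your final step is misattributed, though. The argument that differentials between outer generators ``stay on the outer side'' is not a maximum-principle or degeneration-of-moduli-spaces argument about the actual pseudoholomorphic curves in $\Sym^d(\Sigma)$. In Juh\'asz's proof it is a statement about the \emph{domains} (shadows) of Whitney disks: positivity of the local multiplicities of a holomorphic disk, combined with the combinatorial position of the outer generators relative to $\bdd P$, forces the domain of any nonnegative homotopy class $\phi \in \pi_2(\mathbf{x},\mathbf{y})$ with both ends outer to have zero multiplicity on the ``inner'' side of $P$. Once the domain is confined in this way, the moduli spaces of the cut and uncut diagrams literally coincide, and the identification of chain complexes follows. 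No analytic maximum principle is invoked; the confinement is purely topological. You also underplay the second half of the niceness hypothesis: boundary-coherence of the closed components of $S\cap R(\ga)$ is used to guarantee admissibility and an honest bijection on generators, not merely to avoid ``index obstructions.'' These corrections aside, the skeleton you propose is the same as Juh\'asz's.
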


\subsection{Norms on 3-manifolds} \label{subsection norms} 

Here we briefly review the various (semi)norms that have been defined on the second homology group  $H_2(M,\bdd M; \bR)$ of a 3-manifold $M$ with boundary.  This survey is meant to highlight the geometric nature of the sutured Floer polytope. 

Note that we are always given a map $H_2(M,\bdd M) \to \bZ^{\geq 0}$, which is first extended to a rational-valued map on $H_2(M, \bdd M; \bQ)$ by linearity and then to a real-valued map on $H_2(M,\bdd M; \bR)$ by continuity.  Finally, in each case some work has to be done to show that the resulting map on the real-valued homology group is indeed a seminorm.  In each case, we refer to all three maps by the same symbol, but it is obvious which one we mean. 
 
Thurston defined a seminorm on the homology of a 3-manifold $(M,\bdd M)$ with possibly empty boundary \cite{Thurston norm}.  Given a properly embedded, oriented closed surface $S \subset M$, set 
\[
\chi_-(S):= \sum_{\textrm{components $S_i$ of } S} \max\{ 0,-\chi(S_i)\}.
\]
Then the {\it Thurston seminorm} is given by the map
\begin{align*}
x &: H_2(M,\bdd M; \bR) \to \bZ^{\geq 0}, \\
x (\al) &:= \min \{ \chi_-(S) : [S]=\al \in H_2(M,\bdd M) \}.
\end{align*}
The seminorm $x$ is a norm if there exist no subspace of $H_2(M,\bdd M)$ that is spanned by surfaces of nonnegative Euler characteristic, that is, spheres, annuli and tori. The Thurston seminorm measures the ``complexity'' of a certain homology class.  Thurston showed that some top-dimensional faces of the norm unit ball are {\it fibred}.  That is, a face $F$ if fibred if there exists a fibration $M \to S^1$ with fibre $\Si$ such that the ray $r \cdot [\Si]$ for $r \in \bR^{\geq 0}$ intersects the unit ball in the interior of $F$.  Moreover, all of the rational rays through the interior of $F$ in a similar way represent fibrations of $M$.  

Scharlemann generalised the Thurston norm \cite{Scharlemann}. 
As before let $(M,\bdd M)$ be a given 3-manifold and $S$ a properly embedded surface in $M$.  Now let $\beta$ be a properly embedded 1-complex in $M$, and define 
\[
\chi_\beta(S):=\sum_{\textrm{components $S_i$ of } S} \max \{ 0, - \chi (S_i)+ \abs{S_i \cap \beta}\}.
\]
Then the {\it generalised Thurston norm} is given by the map
\begin{align*}
x_\beta & : H_2(M,\bdd M) \to \bZ^{\geq 0}, \\
x_\beta (\al) &:= \min \{ \chi_\beta(S)\mid [S]=\al \in H_2(M,\bdd M) \}.
\end{align*}

The generalised Thurston norm specialises to the case of sutured manifolds \cite{CC sutured Thurston norm, Scharlemann}.  In particular, suppose that $(M,\ga)$ is sutured manifold, and that $S$ is a properly embedded surface.  Then let $n(S)$ denote the absolute value of the intersection number of $\bdd S$ and $s(\ga)$ as elements of $H_1(\bdd M)$.  Define 
\[
\chi_-^s(S):=\sum_{\textrm{components $S_i$ of } S} \max \{ 0, - \chi (S_i)+\frac{1}{2}n(S_i)\}.
\]
Note that if we took $\be:=s(\ga)$, then $\chi_\be(S)=2 \chi_-^s(S) + \chi(S)$.  Similarly to before,  the {\it sutured Thurston norm}  is given by the map
\begin{align*}
x^s &: H_2(M,\bdd M) \to \bZ^{\geq 0}, \\
x^s (\al) &:= \min \{ \chi^s_-(S): [S]=\al \in H_2(M,\bdd M) \}.
\end{align*}
The motivation for defining $x^s$ comes from looking at the manifold $DM$ obtained by gluing two oppositely oriented copies of $M$ along the boundary, that is,  
\[
DM:=(M,\ga) \cup (-M,-\ga) / \sim,
\] 
where the equivalence relation identifies $R_+(\ga)$ with $R_+(-\ga)$ pointwise in the obvious way.  Then $DM$ is referred to as the {\it double} of $M$.  Similarly, if $S$ is a properly embedded surface in $M$, then $DS$ is the double of $S$ in $DM$.    Now, Theorem 2.3 \cite{CC sutured Thurston norm} says that there is a natural ``doubling map'' $D_* \colon H_2(M,\bdd M;\bR) \to H_2(DM,\bdd DM;\bR)$, so that for any $\al \in H_2(M,\bdd M;\bR)$, we have 
\[
x^s(\al)=\frac{1}{2}x(D_*(\al)).
\]

So far all of the described seminorms have been symmetric.  As the sutured Floer polytope is asymmetric in general, the unit balls of these seminorms are certainly not dual to the sutured Floer polytope. Also, in order to talk about the polytope as being dual to the unit ball of a seminorm, we must pick a trivialisation because otherwise the polytope is defined only up to translation in $H^2(M,\bdd M)$.  

Fix a balanced sutured manifold $(M,\ga)$ and a trivialisation $t \in T(M,\ga)$.  Using the theory developed by Juh\'asz, we define an integer-valued function on $H_2(M,\bdd M)$, dependent on $t$, that plays the role of the Thurston-type norms in the case of the sutured Floer polytope.  First of all, associate an integer $c(S,t)$ to an oriented decomposing surface $S$ in $(M,\ga)$ \cite[Def.\,3.16]{Jupolytope}.  This purely geometric invariant is given by
\begin{equation} \label{equation cst}
c(S,t):= \chi(S) + I(S) - r(S,t),
\end{equation}
where $\chi(S)$ is the Euler characteristic, $I(S)$ generalises the term $-\frac{1}{2}n(S_i)$ in the definition of the sutured seminorm, and $r(S,t)$ is an additional component, which accounts for the dependance of the polytope on the trivialisation $t$.  

Any generic oriented decomposing surface $S$ is such that the positive unit normal field $\nu_S$ of $S$ is nowhere parallel to $v_0$ along $\bdd S$.  Denote the components of $\bdd S$ by $T_1, \ldots, T_k$; each of the components has an orientation coming from the orientation of $S$.  Let $w_0$ denote the nowhere zero vector field obtained by projecting $v_0$ into $TS$.  Further, let $f$ be the positive unit tangent vector field of $\bdd S$.  For $1 \leq i \leq k$, define the {\it index} $I(T_i)$ to be the (signed) number of times $w_0$ rotates with respect to $f$ as we go around $T_i$.  Then set
\[
I(T_i):=\sum_{i=1}^k I(T_i).
\]

Next, let $p(\nu_S)$ be the projection of $\nu_S$ into $v_0^\perp$.  Observe that $p(\nu_S)|\bdd S$ is nowhere zero.  For $1 \leq i \leq k$ define $r(T_i,t)$ to be the number of times $p(\nu_S)|\bdd T_i$ rotates with respect to $r$ as we go around $T_i$.  Then set
\[
r(S,t):=\sum_{i=1}^k r(T_i,t).
\]
  
Now, assuming that $H_2(M)=0$ and for a fixed $t \in T(M,\ga)$ we define the function
\begin{align*}
y_t & : H_2(M,\bdd M) \to \bZ, \\
y_t(\al)& := \min \{ -c(S,t)): S \textrm{ nice decomposing surface}, [S]=\al \}.
\end{align*}
\begin{rmk} As we noted before, any open groomed surface can be slightly perturbed into a nice surface.  Any homology class $\al \neq 0$ has a groomed surface representative \cite[Lem.\,0.7]{Gabai II}, however it is not clear that it necessarily has an open groomed representative.  Thus the condition $H_2(M)=0$.   We could have relaxed the definition and required each $S$ to satisfy all the conditions of being nice except openness, but it is not clear that this would have been helpful.
\end{rmk}

It can be shown that if $T$ is a component of $\bdd S$ such that $T \not \subset \ga$ then $I(T)=-\frac{\abs{T \cap s(\ga) }}{2}$ \cite[Lem.\,3.17]{Jupolytope}.  In other words, in this case $-I(T)=\frac{1}{2} n(T)$ which is the second term in the definition of $x^s$.

We would like to say that the function $y_t$ has some useful properties, such as that it satisfies the triangle inequality and positive homogeneity with respect to the integers. Indeed, as we see in Proposition \ref{prop properties}, these properties follow from the definitions and from Lemma \ref{lem calt}.  

\begin{lemma} \cite[Cor.\,4.11]{Jupolytope} \label{lem calt}
Let $(M,\ga)$ be a taut, strongly balanced sutured manifold such that $H_2(M)=0$.  Then 
\begin{equation} \label{equation calt2}
c(\al,t)=\max \{ c(S,t) : S \textrm{ a nice decomposing surface}, [S]=\al\}.
\end{equation} 
\end{lemma}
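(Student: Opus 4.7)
The plan is to establish two matching bounds: that $c(\al,t) \geq c(S,t)$ for every nice decomposing surface $S$ with $[S]=\al$, and that $c(\al,t) = c(S_1,t)$ for some specific nice $S_1$ realising $\al$. Together these force $c(\al,t) = \max \{c(S,t) : S \text{ nice},\ [S]=\al\}$.

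For the attainment half, I would apply Theorem~\ref{thm decomposition}(i) to $\al$ to produce a groomed decomposing surface $S_0$ with $[S_0]=\al$ giving a taut decomposition $(M,\ga) \leadsto^{S_0} (M_0',\ga_0')$ and $SFH(M_0',\ga_0') \cong SFH_\al(M,\ga)$. As noted after Definition~\ref{def nice}, a small perturbation of $\bdd S_0$ into generic position yields a nice surface $S_1$ with $[S_1]=\al$, the same value $c(S_1,t) = c(S_0,t)$, and a decomposed manifold $(M_1',\ga_1')$ homeomorphic to $(M_0',\ga_0')$. Theorem~\ref{thm nice} then gives
\[
SFH(M_1',\ga_1') = \bigoplus_{\fs \in O_{S_1}} SFH(M,\ga,\fs),
\]
and matching this against $SFH(M_1',\ga_1') \cong SFH_\al(M,\ga)$ identifies the set $O_{S_1} \cap S(M,\ga)$ with $\{\fs \in S(M,\ga) : \langle c_1(\fs,t),\al\rangle = c(\al,t)\}$. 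Combining this identification with the geometric identity that $\langle c_1(\fs,t),[S]\rangle = c(S,t)$ for every $\fs \in O_S$ yields $c(\al,t) = c(S_1,t)$. The identity itself is a direct relative-Euler-class computation: for outer $\fs$ one chooses a representative $v$ with $v \neq -\nu_S$ everywhere on $S$, uses the projection $p(\nu_S)$ of $\nu_S$ onto $v^\perp$ as a partial section of $v^\perp|_S$, and reads off from its boundary winding against $t$ and its signed interior zeros exactly the quantity $\chi(S) + I(S) - r(S,t)$.

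For the upper-bound half, I would invoke the sutured adjunction-type inequality
\[
\langle c_1(\fs,t),[S]\rangle \geq c(S,t) \quad \text{for every } \fs \in S(M,\ga) \text{ and every nice } S \text{ with } [S]=\al;
\]
taking the minimum over $\fs$ yields $c(\al,t) \geq c(S,t)$, which combined with the attainment step proves the lemma. The hardest step will be establishing this adjunction inequality for non-outer $\fs$ in the support: for outer $\fs$ it is the equality above, but in general one must show that the excess $\langle c_1(\fs,t),[S]\rangle - c(S,t)$ is a non-negative even integer whenever $SFH(M,\ga,\fs) \neq 0$. This is the sutured analogue of the classical Ozsv\'ath--Szab\'o adjunction inequality, and the expected route is to argue that any non-outer representative of $\fs$ acquires extra signed zeros of $p(\nu_S)$ on $S$ whose sign is pinned down by the non-vanishing of $SFH(M,\ga,\fs)$. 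The taut and $H_2(M)=0$ hypotheses enter only to make Theorem~\ref{thm decomposition} available in the attainment step and to ensure the groomed surface produced there can be taken open.
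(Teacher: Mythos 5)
This lemma is cited verbatim from \cite[Cor.\,4.11]{Jupolytope}; the paper gives no proof of its own, so the comparison is against Juh\'asz's original argument.

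Your two-sided strategy (attainment plus a uniform upper bound) has the right shape, and your attainment half is essentially sound: produce a nice $S_1$ in class $\al$ giving a taut decomposition via Theorem~\ref{thm decomposition}(i) together with the nice-perturbation remark after Definition~\ref{def nice}, use tautness of $(M_1',\ga_1')$ to get $SFH(M_1',\ga_1')\neq 0$, pick $\fs\in O_{S_1}\cap S(M,\ga)$ via Theorem~\ref{thm nice}, and read off $c(S_1,t)=\langle c_1(\fs,t),\al\rangle\geq c(\al,t)$ from the geometric identity $\langle c_1(\fs,t),[S_1]\rangle=c(S_1,t)$ for outer $\fs$. One caveat: the ``matching of direct sums'' you invoke to identify $O_{S_1}\cap S(M,\ga)$ with the minimizing set is not a formal consequence of the two displayed isomorphisms --- that identification is precisely the content of \cite[Prop.\,4.12]{Jupolytope}, not something you can extract from comparing abstract direct sums --- but once the adjunction inequality below is in hand you only need the one-sided bound anyway, so this is a minor imprecision.

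The upper bound is where there is a real gap, and you correctly flag it as the hard step. The inequality you want, $\langle c_1(\fs,t),[S]\rangle\geq c(S,t)$ for all $\fs\in S(M,\ga)$ and all nice $S$ in the class, is indeed equivalent to $c(S,t)\leq c([S],t)$, which is exactly the substance of the technical work leading up to \cite[Cor.\,4.11]{Jupolytope}. But your proposed mechanism --- that a non-outer representative of $\fs$ ``acquires extra signed zeros of $p(\nu_S)$ whose sign is pinned down by the nonvanishing of $SFH(M,\ga,\fs)$'' --- is not an argument. There is no local mechanism by which nonvanishing of a Floer group constrains the sign of zeros of a section of a bundle; and the quantity $\langle c_1(\fs,t),[S]\rangle-c(S,t)$ is unbounded below as $\fs$ ranges over all of $\Spin^c(M,\ga)$ (it shifts by $2\langle h,[S]\rangle$ under $\fs\mapsto\fs+h$, by \cite[Lem.\,3.13]{Jupolytope}). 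The whole point is that restricting $\fs$ to the support $S(M,\ga)$ forces the sign, and proving that this restriction suffices is exactly where the Floer-theoretic content of Juh\'asz's theorem lives --- it is not a pointwise Euler-class count but a global consequence of the decomposition theorem and the structure of taut decompositions. Since you do not supply that step, the proposal is incomplete at precisely the place you identify as the hardest.
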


\begin{prop} \label{prop properties}
Let $(M,\ga)$ be a taut, strongly balanced sutured manifold such that $H_2(M)=0$.  Fix $t \in  T(M,\ga)$. Then for any $\al, \be \in H_2(M,\bdd M)$ and any $m \in \bZ$, the following hold
\begin{align*}
y_t(\abs{m} \cdot \al) & = \abs{m} \cdot y_t(\al ), \\
y_t(\al + \be)  & \leq y_t(\al) + y_t(\be).
\end{align*}
\end{prop}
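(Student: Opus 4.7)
The plan is to reduce both properties to elementary facts about minima over the polytope $P(M,\ga,t)$, with the nontrivial content absorbed entirely into Lemma \ref{lem calt}. The first step is to rewrite $y_t$ in a more usable form: by definition,
\[
y_t(\al) \;=\; \min \{-c(S,t) : S \text{ nice decomposing surface}, [S]=\al\} \;=\; -\max\{c(S,t) : [S]=\al\}.
\]
Under the hypotheses (taut, strongly balanced, $H_2(M)=0$), Lemma \ref{lem calt} identifies this maximum with $c(\al,t)$, so $y_t(\al) = -c(\al,t)$. Now $c(\al,t) = \min\{\langle c,\al\rangle : c \in P(M,\ga,t)\}$ by \eqref{equation calt}, which exhibits $-y_t$ as the (pointwise) minimum of a family of linear functionals of $\al$ parametrised by points of a bounded polytope.

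From this description, both properties follow by standard convex--analytic manipulations. For positive homogeneity, observe that for any integer $k \geq 0$,
\[
c(k\al,t) \;=\; \min_{c \in P}\langle c, k\al\rangle \;=\; k\cdot \min_{c \in P}\langle c,\al\rangle \;=\; k\,c(\al,t),
\]
so $y_t(k\al) = -k\,c(\al,t) = k\,y_t(\al)$; setting $k=|m|$ gives the first identity. For subadditivity, superadditivity of a minimum over a common domain yields
\[
c(\al+\be,t) \;=\; \min_{c\in P}\bigl(\langle c,\al\rangle + \langle c,\be\rangle\bigr) \;\geq\; \min_{c\in P}\langle c,\al\rangle + \min_{c\in P}\langle c,\be\rangle \;=\; c(\al,t) + c(\be,t),
\]
and negating gives $y_t(\al+\be) \leq y_t(\al) + y_t(\be)$.

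The only place where the hypotheses really enter is the identification $y_t(\al) = -c(\al,t)$ via Lemma \ref{lem calt}; once this is in hand, both claims are formal. I expect the main conceptual point to record—rather than a computational obstacle—is that tautness, strong balance, and $H_2(M)=0$ (used through Theorem \ref{thm decomposition}) ensure that every homology class admits enough nice decomposing surface representatives for the combinatorial minimum defining $y_t$ to coincide with the geometric--polytope quantity $-c(\al,t)$, after which convex analysis takes over and the proof is essentially immediate.
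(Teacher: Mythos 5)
Your proof is correct and follows essentially the same route as the paper: both invoke Lemma \ref{lem calt} to identify $y_t(\al)=-c(\al,t)$, then reduce the two properties to the scaling and superadditivity of the minimum of linear functionals over the polytope $P(M,\ga,t)$. The only difference is cosmetic—you phrase things via superadditivity of $\min$, while the paper equivalently uses subadditivity of $\max\{\langle -c,\al\rangle\}$—and you spell out the homogeneity step the paper calls ``obvious.''
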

\begin{proof}
From Lemma \ref{lem calt} it follows that $y_t(\al)=-c(\al,t)$ for any  $\al \in H_2(M,\bdd M)$.  By definition $c(\al,t)=\min \{ \langle c, \al \rangle : c \in P(M,\ga,t) \}$, thus 
\[
y_t(\al)=\max \{\langle -c, \al \rangle : c \in P_t\},
\]
where we have denoted $P(M,\ga,t)$ by $P_t$.

The first statement of the proposition is obvious. Proving the triangle inequality is also easy, and is identical to the proof given in  \cite[Prop.\,8.2]{Jupolytope}:
\begin{align*}
y_t(\al + \be) & =\max \{ \langle -c, \al + \be \rangle : c \in P_t \} \\
		& =\max \{ \langle -c, \al \rangle + \langle -c, \be \rangle : c \in P_t \}  \\
		& \leq \max \{ \langle -c, \al  \rangle : c \in P_t \} + \max \{ \langle -c, \be \rangle : c \in P_t \} \\
		& = y_t(\al) + y_t(\be).
\end{align*} 
\end{proof}

As before we can extend $y_t$ to a rational-valued map on $H_2(M,\bdd M ;\bQ)$ by linearty and then to a real-valued map on $H_2(M,\bdd M; \bR)$ by continuity.  Thus, for any balanced sutured manifold with $H_2(M)=0$ we can define a {\it geometric sutured function}
\[
y_t \colon H_2(M,\bdd M) \to \bR,
\]
such that $y_t(r \cdot \alpha)=r \cdot y_t(\al)$ and $y_t(\al + \be) \leq y_t(\al) + y_t(\be)$ for $r \in \bR$ and $\al, \be \in H_2(M,\bdd M ;\bR)$.

The following corollary says that $y_t$ is actually a (semi)norm for a lot of the often-studied sutured manifolds.

\begin{cor} \label{yt dual to polytope}
Let $(M,\ga)$ be a taut, strongly balanced sutured manifold such that \linebreak $H_2(M)=0$.  If there exists a $t \in T(M,\ga)$ such that 
\[
y_t \colon H_2(M,\bdd M) \to \bR^{\geq 0}, \\
\]
then $y_t$ is an asymmetric seminorm. In particular, this is the case when $H^2(M)=0$. Moreover, the unit ball of the seminorm $y_t$ is the dual to the polytope $P(M,\ga,t)$.  Finally,  $y_t$ is a norm if and only if  $\dim P(M,\ga,t) = b_1(M)$.
\end{cor}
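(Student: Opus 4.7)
The plan is to combine the support-function description of $y_t$ with the symmetry properties of the sutured Floer polytope. Recall that, by Lemma \ref{lem calt} together with \eqref{equation calt},
\[
y_t(\al) \;=\; -c(\al,t) \;=\; \max\{\langle -c, \al\rangle : c \in P(M,\ga,t)\},
\]
an identity that already appeared in the proof of Proposition \ref{prop properties}. Positive homogeneity and subadditivity on $H_2(M,\bdd M;\bZ)$ were proved there, and I would extend them to $H_2(M,\bdd M;\bR)$ by the standard linearity-and-continuity argument. Combined with the hypothesis $y_t \ge 0$, these three properties are the defining properties of an asymmetric seminorm, which gives the first statement of the corollary.

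For the duality, the unit ball of $y_t$ is
\[
\{\al \in H_2(M,\bdd M;\bR) : \langle -c, \al\rangle \le 1 \text{ for all } c \in P(M,\ga,t)\},
\]
which, under the pairing $H_2(M,\bdd M;\bR) \otimes H^2(M,\bdd M;\bR) \to \bR$, is by definition the appropriately-oriented polar dual of $P(M,\ga,t)$. For the norm criterion I would invoke the symmetrisation identity
\[
y_t(\al) + y_t(-\al) \;=\; \max_{c \in P(M,\ga,t)}\langle c,\al\rangle - \min_{c \in P(M,\ga,t)}\langle c,\al\rangle,
\]
whose right-hand side is the width of $P(M,\ga,t)$ in direction $\al$; this width is strictly positive for every nonzero $\al$ exactly when the polytope is full-dimensional. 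Since $\dim H^2(M,\bdd M;\bR) = \dim H_1(M;\bR) = b_1(M)$ by Poincar\'e--Lefschetz duality, this gives the equivalence ``$y_t$ is a norm $\Leftrightarrow \dim P(M,\ga,t) = b_1(M)$''.

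The main obstacle lies in the \emph{in particular} clause: finding a trivialisation with $y_t \ge 0$ when $H^2(M)=0$. My plan is to exploit the conjugation symmetry $SFH(M,\ga,\fs) \cong SFH(M,\ga,J\fs)$, which under $\fs \mapsto c_1(\fs,t)$ forces $P(M,\ga,t)$ to be centrally symmetric about some point $k(t) \in H^2(M,\bdd M;\bR)$. Changing $t$ by $h \in H^1(\bdd M)$ shifts $k(t)$ by $2\,\delta(h)$, where $\delta\colon H^1(\bdd M) \to H^2(M,\bdd M)$ is the connecting homomorphism. Under $H^2(M)=0$ the long exact sequence of $(M,\bdd M)$ forces $\delta$ to be surjective, so we may choose $t$ with $k(t)=0$; then $P(M,\ga,t)$ is symmetric about $0$, whence $0 \in P(M,\ga,t)$ and $y_t(\al) \ge \langle 0,\al\rangle = 0$. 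I expect the hard part here to be verifying the translation formula and the centre-of-symmetry claim in the relative setting, as both the conjugation action on $\Spin^c(M,\ga)$ and the change-of-trivialisation formula for $c_1(\cdot,t)$ are delicate and must be reconciled with the factor-of-two identity $c_1(\fs_1,t)-c_1(\fs_2,t)=2(\fs_1-\fs_2)$ recalled in Subsection \ref{subsection polytope}.
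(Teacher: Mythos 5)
Most of your argument is correct and essentially tracks the paper. The support-function identity $y_t(\al) = \max\{\langle -c,\al\rangle : c \in P(M,\ga,t)\}$ plus the hypothesis $y_t\ge 0$ immediately gives the asymmetric seminorm property; the polar-dual description of the unit ball is exactly what the paper intends; and the width/symmetrisation argument for the norm criterion (via $y_t(\al)+y_t(-\al)$ equalling the width of $P(M,\ga,t)$ in direction $\al$, nonvanishing for $\al\neq 0$ iff the polytope is full-dimensional) is the same argument the paper cites from \cite[Prop.\,8.2]{Jupolytope}.

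The problem is your argument for the \emph{in particular} clause. You propose to exploit a conjugation symmetry $SFH(M,\ga,\fs)\cong SFH(M,\ga,J\fs)$ to force $P(M,\ga,t)$ to be centrally symmetric about some $k(t)$, and then move $k(t)$ to the origin by changing $t$. But the sutured Floer polytope of a sutured manifold is \emph{not} centrally symmetric in general; the paper states this explicitly just before Theorem A (``the sutured Floer polytope is not symmetric in general'') and one of its own examples in Subsection \ref{subsection examples} has polytope equal to a pyramid, which has no centre of symmetry at all. The conjugation symmetry for sutured Floer homology involves reversing the orientation of the sutured manifold (or of its sutures), so it relates $SFH(M,\ga)$ to the sutured Floer homology of a different sutured manifold rather than inducing an involution on $\Spin^c(M,\ga)$ preserving $SFH(M,\ga)$. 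Since your argument for the \emph{in particular} clause rests entirely on this symmetry, it fails. The paper instead appeals to \cite[Lem.\,3.12]{Jupolytope}, which works directly with the change-of-trivialisation formula to show that when $H_1(M)$ is torsion-free (equivalently, under the standing hypothesis $H_2(M)=0$, when $H^2(M)=0$) one can pick a trivialisation $t$ with $0\in P(M,\ga,t)$; this is what gives $y_t\ge 0$, with no symmetry of the polytope needed.
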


The fact that such a $t$ exists follows from Lemma 3.12 of \cite{Jupolytope}.  Basically, when there is no torsion in $H_1(M)$, then we can choose a trivialisation such that $P(M,\ga,t)$ contains $0 \in H^2(M,\bdd M)$. The very last statement in the corollary uses the same argument as in the proof of \cite[Prop.\,8.2]{Jupolytope}.

\begin{rmk} \label{rmk Ju norm}
Juh\'asz defines an asymmetric seminorm $y$ whose dual seminorm unit ball is $-P(M,\ga)$, where  $-P(M,\ga)$ is the centrally symmetric image of $P(M,\ga)$  \cite[Def.\,8.1]{Jupolytope}.  Here $P(M,\ga)$ is the polytope with the centre of mass at $0 \in H^2(M,\bdd M;\bR)$.  Specifically he defines
\begin{align*}
y &: H_2(M,\bdd M ; \bR) \to \bR^{\geq 0} \\
y(\al) &:=\max \{\langle -c ,\al \rangle : c \in P(M,\ga) \}.
\end{align*}
When $t$ is such that the centre of mass of $P(M,\ga,t)$ lies at $0\in H^2(M,\bdd M)$, then $y_t=y$.
\end{rmk}

\section{Foliations on sutured manifolds} \label{section foliations}

This section sets up the foliation theory necessary to understand our duality result.  It begins with some basic theory of foliations on sutured manifolds; further background reading can be found in \cite{Foli1,Foli2}.  We then proceed to describe the way Gabai constructs taut, finite depth foliations on any given sutured manifold \cite{Gabai}.  This is followed by a description of junctures and spiral staircase neighbourhoods, which are used in proving one direction of Lemma C.  The last subsection introduces the foliation cones of Cantwell and Conlon \cite{CC99}.

\subsection{Foliations on sutured manifolds}\label{subsection foliations}

\begin{deff}
A {\it foliation $\mF$ of a sutured manifold $(M,\ga)$} is a foliation of $M$ such that the leaves of $\mF$ are transverse to $\ga$ and tangential to $R(\ga)$ with normal direction pointing inward along $R_-(\ga)$ and outward along $R_+(\ga)$.  
\end{deff} 

The collection of simple closed curves $\bdd \ga \subset \bdd M$ can be considered as a set of convex corners on $M$. Elsewhere in the literature, when $M$ is any given manifold, the notation $\bdd _\pitchfork M$ is used to denote the subset of $\bdd M$ where the leaves of $\mF$ are transverse to the boundary of $M$, and $\bdd_\tau M$ to denote the complement $\bdd M - \bdd_\pitchfork M$ where the leaves of $\mF$ are tangential to the boundary of $M$.

Let $L$ be a leaf of a foliation $\mF$ and let $\{C_\al\}_{\al \in \mA}$ denote the family of all compact subsets of $L$.  Let $W_\al:=L -C_\al$ and denote by $\overline W_\al$ the closure of $W_\al$ in $M$.  Recall that the {\it asymptote} of $L$ is the limit set $\lim L:=\bigcap_{\al \in \mA} \overline W_\al$.  The asymptote for a noncompact leaf $L$ is a compact, nonempty, $\mF$-saturated set, that is, $\lim L$ is a compact set which is a nonempty union of leaves of $\mF$.  Further,  $L$ is said to be {\it proper} if it is not asymptotic to itself, and {\it totally proper} if every leaf in the closure $\overline L$ is a proper leaf.  

A leaf $L$ of a foliation $\mF$ is said to be at {\it depth 0} if it is compact.  For $k>1$, a leaf $L$ is said to be at {\it depth k} if $\overline L \setminus L$ is a collection of leaves at depths less than $k$, with at least one leaf at depth $k-1$ \cite[Def.\,8.3.14]{Foli1}.  Note that this definition of depth  assumes that the leaf is totally proper.  A {\it depth $k$ foliation} is a foliation with all leaves at depth $l \leq k$ and at least one leaf at depth $l=k$.

We are primarily interested in taut depth one foliations $\mF$ of a sutured manifold $(M,\ga)$.  As is described below, these foliations are built from the product foliation $\mP$ of a product sutured manifold $(M',\ga'):=(\Si \times I, \bdd \Si \times I)$. It is sometimes necessary to specify a particular leaf of $\mP$, and so we assume that $\mP$ is precisely the foliation with leaves $L_t:=\Si \times t$, for $t\in I =[0,1]$.

\begin{deff}  \label{deff taut}
A transversely oriented codimension one foliation $\mF$ on a sutured manifold $(M,\ga)$ is {\it taut} if there exists a curve or properly embedded arc in $M$ that is transverse to the leaves of $\mF$ and that intersects every leaf of $\mF$ at least once.

\end{deff}

\begin{convention}
Let $(M,\ga)$ be a sutured manifold.  From now on, when we say a depth one or depth zero foliation of $(M,\ga)$, it is implicit that these foliations are smooth, transversely oriented, and in the case of the depth one their sole compact leaves are the connected components of $R(\ga)$.  
\end{convention}  

\begin{rmk} \label{rmk depth zero and foliations} Suppose that $M$ is a manifold with boundary that fibres over $S^1$.  Then $\bdd M$ is a (possibly empty) collection of tori.  If $M$ is made into a sutured manifold by specifying the sutures $\ga$, then one of two situations occurs: 
\begin{enumerate}
\item $\ga=T(\ga)=\bdd M$ and the fibration is a depth zero foliation;
\item $A(\ga) \neq \emptyset$, and the fibres of the fibration are {\it not} transverse to $\ga$ or {\it not} tangential to $R(\ga)$.
\end{enumerate}
\end{rmk}

Set $M_0:=M \setminus R(\ga)$.   The following lemma summarises useful information about depth one foliations.

\begin{lemma} \cite[Lem.\,11.4.4]{Foli2}\label{lemma fibring}
Let $\mF$ be a transversely oriented, $C^\infty$ foliation of the connected sutured manifold $(M,\ga)$, transverse to $\ga$ and having the components of $R(\ga)$ as sole compact leaves.  Let $\mL$ be a smooth one-dimensional foliation transverse to $\mF$ and tangent to $\ga$, so that $\mL|A(\ga)$ is a foliation by compact, properly embedded arcs.  Then the following statements are equivalent.
\begin{enumerate}
\item $\mF$ is a taut depth one foliation. 
\item There is a smoothly embedded circle $\Si \subset M_0$ that is transverse to $\mF | M_0$, meeting each leaf of that foliation exactly once.
\item $\mL$ can be chosen to have a closed leaf in $M_0$ that meets each leaf of $\mF| M_0$ exactly once.
\item $\mF|M_0$ fibres $M_0$ over $S^1$.
In this case, there is a $C^0$ flow $\Phi_t$ on $M$ having the leaves of $\mL$ as flow lines, stationary at the points of $R(\ga)$, smooth on $M_0$ and carrying the leaves of $\mF$ diffeomorphically onto one another. 
\end{enumerate}
Let  $\mF$ be a depth one foliation on $(M,\ga)$.  Then $\mF$ determines a fibration $M_0 \to S^1$, with fibres the noncompact leaves of $\mF$.
\end{lemma}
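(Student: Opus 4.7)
I would prove the equivalence of (i)--(iv) via the cycle (i) $\Rightarrow$ (iv) $\Rightarrow$ (iii) $\Rightarrow$ (ii) $\Rightarrow$ (i), then build the flow $\Phi_t$, and finally derive the last sentence. A central tool throughout is the collar description of $\mF$ near $R(\ga)$ that holds for any depth one foliation on $(M,\ga)$ whose sole compact leaves are the components of $R(\ga)$: a neighbourhood of each such component is a spiral staircase in which noncompact leaves wind monotonically onto $R(\ga)$.

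For the easy arc (iv) $\Rightarrow$ (iii) $\Rightarrow$ (ii), I would start from a fibration $p \colon M_0 \to S^1$ whose fibres are the leaves of $\mF|M_0$, pick a Riemannian metric, and take $\mL|M_0$ to be the orthogonal one-dimensional foliation. The collar picture lets me extend $\mL$ smoothly across $R(\ga)$ tangent to $\ga$, so that $\mL|A(\ga)$ is a foliation by compact properly embedded arcs joining $R_-(\ga)$ to $R_+(\ga)$. A small isotopy of the metric along an arc in $S^1$ then arranges for some flow line of $\mL$ in $M_0$ to close up after exactly one revolution, giving (iii); smoothness of $\mL$ makes it a smoothly embedded circle, so (ii) follows immediately. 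For (ii) $\Rightarrow$ (iv), I would define $f \colon M_0 \to \Sigma \cong S^1$ by sending $x$ to the unique intersection of $\Sigma$ with the leaf of $\mF|M_0$ through $x$; smoothness of $f$ is standard from transversality, $df$ is surjective along $\Sigma$, and $f$ is proper since $M$ is compact and the leaves of $\mF|M_0$ are closed in $M_0$. Ehresmann's fibration theorem now yields (iv).

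The main obstacle is (i) $\Rightarrow$ (iv). Tautness provides a transverse closed curve or properly embedded arc $c$ meeting every leaf of $\mF$. If $c$ is an arc from $R_-(\ga)$ to $R_+(\ga)$, I would close $c$ up to a smooth transverse circle $\Sigma \subset M_0$ using the spiral staircase, exploiting the fact that noncompact leaves wind infinitely many times around $R_\pm(\ga)$. The resulting $\Sigma$ meets each noncompact leaf some constant positive number $k$ of times; a standard holonomy argument along $\Sigma$ combined with the depth-one structure then lets me pass to a transverse subcircle meeting each noncompact leaf exactly once. Then the map $f$ from the previous paragraph is a well-defined smooth fibration, establishing (iv). For (ii) $\Rightarrow$ (i), $\Sigma$ already witnesses tautness for the noncompact leaves, any short $\mL$-arc through $R(\ga)$ witnesses it for the compact leaves, and depth one is forced by the standing convention on $\mF$.

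To construct $\Phi_t$ I would pick $\mL$ as in (iii), choose a smooth nonvanishing vector field $X$ tangent to $\mL|M_0$ whose time-one flow traverses each $\mL$-leaf in $M_0$ exactly once, and damp $X$ to zero on $R(\ga)$; integrating gives a $C^0$ flow that is smooth on $M_0$, stationary on $R(\ga)$, and which carries leaves of $\mF$ diffeomorphically onto one another because $X$ is tangent to $\mL$, which is transverse to $\mF$. For the final sentence, the standing convention that $\mF$ is depth one with sole compact leaves the components of $R(\ga)$ already supplies the spiral staircase without invoking tautness, so the same first-return construction as in (i) $\Rightarrow$ (iv) yields a fibration $M_0 \to S^1$ whose fibres are exactly the noncompact leaves of $\mF$.
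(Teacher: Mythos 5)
The paper does not contain its own proof of this lemma; it is cited verbatim from Candel and Conlon, \cite[Lem.\,11.4.4]{Foli2}, so there is no internal argument to compare against. Judging your sketch on its own merits, there are two genuine gaps.

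First, your route (ii) $\Rightarrow$ (iv) via Ehresmann's fibration theorem does not work as stated. Ehresmann requires the submersion to be \emph{proper}, and the map $f \colon M_0 \to \Sigma \cong S^1$ cannot be proper: $S^1$ is compact while $M_0$ is noncompact, and the fibres of $f$ are the noncompact leaves of $\mF|M_0$. The fibration structure must instead be built directly, e.g.\ by flowing a vector field tangent to $\mL$ off $\Sigma$; one then has to argue that the resulting local trivialisations over small arcs of $\Sigma$ fit together, and that the flow does not escape to $R(\ga)$ before returning, which uses the tangency of $\mL$ to $\ga$ together with the hypothesis that each leaf is met exactly once.

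Second, in (i) $\Rightarrow$ (iv) the phrase "pass to a transverse subcircle meeting each noncompact leaf exactly once" is not an operation that exists: a circle has no distinguished subcircles, and truncating or isotoping $\Sigma$ does not in general lower the intersection number with all leaves simultaneously. You would also need an argument that $\Sigma$ meets every noncompact leaf the \emph{same} positive number of times, which is not given. The usual strategy here is different in kind: use the depth-one hypothesis to show all leaves of $\mF|M_0$ are proper with trivial holonomy, so the leaf space is a Hausdorff 1-manifold, and use the transverse closed curve from tautness to show that leaf space is compact, hence $S^1$; the quotient map to the leaf space is then the desired fibration. A smaller issue is in (iv) $\Rightarrow$ (iii): "a small isotopy of the metric" making a flowline close up is not automatic; the right point is that the monodromy of the fibration is a diffeomorphism of a connected noncompact fibre, hence isotopic to one having a fixed point, and this furnishes a section.
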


Following \cite[Sec.\,2]{CCisotopy}, we associate to each depth zero or depth one foliation a cohomology class $H^1(M)$ using Lemma \ref{lemma fibring}.  In particular, the fibration of $M_0$ over $S^1$ gives rise to a map 
\begin{equation} \label{lambda}
\la(\mF) \colon \pi_1(M) \to \bZ = \pi_1(S^1),
\end{equation}
which passes to a cohomology class $\la(\mF) \in H^1(M)$.  

\begin{deff} \label{deff foliation equivalence}
Two depth zero or depth one foliations $\mF$ and $\mF'$ are said to be {\it equivalent}, denoted by $\mF \sim \mF'$, if $\mF$ is isotopic to $\mF'$ via a continuous isotopy that is smooth in $M_0$.  
\end{deff}

The following theorem says  that an equivalence class of foliations $\mF$ is uniquely determined by $\la(\mF)$. 

\begin{thm} \cite[Thm.\,1.1]{CCisotopy} \label{thm equivalence}  Let $\mF$ and $\mF'$ be depth one  type foliations of $(M,\ga)$, such that $\la(\mF)=\la(\mF')$.  Then $\mF$ is equivalent to $\mF'$.
\end{thm}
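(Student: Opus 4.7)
The plan is to exploit Lemma \ref{lemma fibring}(iv) to convert the statement into an isotopy question about two fibrations $M_0 \to S^1$ representing the same cohomology class, to isotope these fibrations inside $M_0$ via Thurston-type fibered-face arguments, and then to extend the resulting isotopy continuously across the compact leaves $R(\ga)$ using the spiral staircase normal form near $R(\ga)$.

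First I would observe that, by Lemma \ref{lemma fibring}(iv), the restrictions $\mF|M_0$ and $\mF'|M_0$ come from smooth fibrations $\pi,\pi'\colon M_0\to S^1$, and by the construction of $\la$ in \eqref{lambda} both $\pi$ and $\pi'$ pull the generator of $H^1(S^1;\bZ)$ back to the same class in $H^1(M_0;\bZ)$. Since $S^1=K(\bZ,1)$, $\pi$ and $\pi'$ are therefore smoothly homotopic.

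Next I would upgrade this homotopy to a smooth isotopy through fibrations of $M_0$. The natural tool is Thurston's theorem on the openness of the fibered cone: the set of fibered classes in $H^1(M_0;\bR)$ is a union of open cones over fibered faces of the Thurston norm ball, and any integral class in such a cone admits a fibration representative unique up to smooth isotopy. Concretely, the closed nonvanishing one-forms $\omega:=\pi^{*}d\theta$ and $\omega':=(\pi')^{*}d\theta$ both represent $\la$, so the convex combination $\omega_s:=(1-s)\omega+s\omega'$ is closed and, after a small exact perturbation that keeps us inside the open fibered cone, nonvanishing; integrating vector fields dual to $\omega_s$ yields a smooth isotopy $H_s\colon M_0\to M_0$ with $H_0=\id$ and $H_1$ carrying leaves of $\mF|M_0$ onto leaves of $\mF'|M_0$.

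Finally I would extend $H_s$ across $R(\ga)$. Each component $R$ of $R(\ga)$ admits a collar neighborhood in $M$ on which both $\mF$ and $\mF'$ take the standard spiral staircase shape of a depth one foliation accumulating on $R$; because this local model is determined up to smooth equivalence by $\la$, the models for $\mF$ and $\mF'$ agree on some collar $R\times[0,\ep)$. A bump function in the collar parameter lets me alter $H_s$ so that it is the identity on $R\times[0,\ep/2)$ while leaving its action on $\mF,\mF'$ unchanged outside $R\times[0,\ep)$; after this modification $H_s$ extends by the identity across $R$, producing a continuous isotopy of $M$ that is smooth on $M_0$ and carries $\mF$ to $\mF'$. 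The main obstacle is precisely this last extension: noncompact leaves accumulate onto $R(\ga)$ and the rate and holonomy of that accumulation can look very different for $\mF$ and $\mF'$, so that the mere equality $\la(\mF)=\la(\mF')$ does not by itself identify the foliations near $R(\ga)$. The spiral staircase machinery introduced earlier in the paper is what pins down a local normal form near $R(\ga)$ depending only on $\la$, and is the essential tool that promotes the smooth isotopy of fibrations on $M_0$ to a continuous isotopy of $M$.
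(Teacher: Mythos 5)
The paper does not actually prove Theorem~\ref{thm equivalence}; it cites it as \cite[Thm.\,1.1]{CCisotopy}, so there is no ``paper's own proof'' to compare against. Evaluated on its own terms, however, your proposal has two genuine gaps.

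First, the interpolation step on $M_0$ is not covered by the tools you invoke. The manifold $M_0 = M \setminus R(\ga)$ is \emph{noncompact} whenever $R(\ga) \neq \emptyset$, and the fibers of $\pi, \pi' \colon M_0 \to S^1$ are noncompact surfaces (the depth-one leaves). Thurston's fibered cone theory and the Laudenbach--Blank / Qu\^e--Roussarie isotopy theorems \cite{LB, QR} are results about compact manifolds; the paper itself invokes them only in the case $R(\ga) = \emptyset$, precisely because they do not directly extend to the noncompact $M_0$. The claim that $\omega_s = (1-s)\omega + s\omega'$ can be perturbed to stay within an ``open fibered cone'' and integrated to an isotopy is therefore unsupported: there is no fibered-cone structure on $H^1(M_0;\bR)$ to appeal to, and even a nonvanishing $\omega_s$ need not define a fibration over $S^1$ when the fibers are noncompact and their ends must accumulate on $R(\ga)$ in a controlled way.

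Second, the extension across $R(\ga)$ is asserted rather than proved, and it is exactly the heart of the theorem. You write that the spiral staircase model near a component $R$ is ``determined up to smooth equivalence by $\la$'' and that the two models therefore ``agree on some collar.'' But the local normal form near $R$ is governed by the junctures and the germinal holonomy of $R$, and one must argue both that these data are functions of $\la$ alone and that the abstract equivalence of local models is compatible with the isotopy $H_s$ already constructed on $M_0$ (an abstract diffeomorphism between collar models need not match $H_1$ on the overlap). You explicitly flag this as ``the main obstacle,'' which is correct, but deferring to ``the spiral staircase machinery introduced earlier in the paper'' is circular: that machinery (Subsection~\ref{subsection junctures}) describes the structure of a single depth-one foliation near $R(\ga)$, it does not establish that two foliations with equal $\la$ have conjugate staircase structures, which is what Cantwell--Conlon prove in \cite{CCisotopy}. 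As it stands the proposal reduces the theorem to its hardest unproven sub-claim.
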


Note that Theorem \ref{thm equivalence} is also true when the foliations are of depth zero, that is, when they are fibrations of $M$ with fibres transverse to $\ga$.

\subsection{Gabai's construction of depth one foliations} \label{subsection Gabai's construction}

In the well-known paper \cite{Gabai}, Gabai gives a way of constructing finite depth foliations on sutured manifolds from a sutured manifold hierarchy.  A  {\it sutured manifold hierarchy} is a sequence of decompositions along surfaces 
\[
(M_0,\ga_0) \leadsto^{S_1} (M_1,\ga_1) \leadsto^{S_2} \cdots \leadsto^{S_n} (M_n,\ga_n),
\]
where $(M_n,\ga_n)$ is a product sutured manifold.  

\begin{thm} \cite[Thm.\,5.1]{Gabai} \label{thm Gabai construction}
 Suppose $M$ is connected and $(M,\ga)$ has a sutured manifold hierarchy
 \[
(M,\ga)=(M_0,\ga_0) \leadsto^{S_1} (M_1,\ga_1) \leadsto^{S_2} \cdots \leadsto^{S_n} (M_n,\ga_n),
\]
so that no component of $R(\ga_i)$ is a compressing torus.  Then there exist  transversely oriented foliations $\mF$ and $\mG$ of $M$ such that the following hold.
\begin{enumerate}
\item $\mF$ and $\mG$ are tangent to $R(\ga)$.
\item $\mF$ and $\mG$ are transverse to $\ga$.
\item If $H_2(M,\ga) \neq 0$, then every leaf of $\mF$ and $\mG$ nontrivially intersects a transverse closed curve or a transverse arc with endpoints in $R(\ga)$.  However, if $\emptyset \neq \bdd M =R_+(\ga)$ or $R_-(\ga)$, then this holds only for interior leaves.
\item There are no 2-dimensional Reeb components of $\mF|\ga$ and $\mG|\ga$.
\item $\mG$ is $C^\infty$ except possibly along toroidal components of $R(\ga)$ or along toroidal components of $S_1$ if $\bdd M = \emptyset$.
\item $\mF$ is of finite depth.
\end{enumerate}
\end{thm}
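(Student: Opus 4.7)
The plan is to induct on the length $n$ of the sutured manifold hierarchy, building $\mF$ and $\mG$ on $(M,\ga)=(M_0,\ga_0)$ by extending foliations back across the decomposing surfaces $S_n, S_{n-1}, \dots, S_1$ in reverse order. For the base case, the terminal piece $(M_n,\ga_n)$ is a product sutured manifold $(\Si\times I,\bdd\Si\times I)$, and both $\mF$ and $\mG$ start life as the product foliation with leaves $\Si\times\{t\}$; this is depth zero, tangent to $R(\ga)$, transverse to $\ga$, and any vertical arc $\{p\}\times I$ witnesses tautness, so all six properties hold trivially.

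For the inductive step I would assume foliations satisfying (i)--(vi) have been built on $(M_1,\ga_1)$ and extend them across $S_1$. The manifold $(M,\ga)$ is recovered from $(M_1,\ga_1)$ by identifying the two copies $S_1^+, S_1^- \subset \bdd M_1$; after decomposition each $S_1^\pm$ lies in $R_\mp(\ga_1)$ and so is tangent to the inductive foliation. A naive regluing would leave $S_1$ as a new compact leaf and, worse, would generally fail to produce even a lamination, because the two-sided holonomies at $S_1$ are independent. The remedy is Gabai's \emph{spinning} (or \emph{spiralling}) operation on a collar neighbourhood $S_1^\pm\times[0,\ep)$: I would replace the trivial collar foliation by one whose leaves spiral infinitely onto $S_1^\pm$ as the collar parameter tends to $0$, with spiral direction dictated by the transverse orientation. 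After spinning on both sides and regluing $S_1^+$ to $S_1^-$, the previously tangent leaves open up into noncompact leaves of $M$ that accumulate onto $S_1$ from both sides, so $S_1$ itself does not become a new compact leaf.

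The hard part will be arranging the spinning so that all six conclusions survive the induction. Tangency to $R(\ga)$ and transversality to $\ga$ are preserved because the spinning is supported off $R(\ga_1)\setminus S_1^\pm$ and because $S_1\cap\ga$ meets $\ga$ only in the controlled ways prescribed by Definition \ref{def decomp}; a transverse arc or circle from the inductive step can then be extended across the glued collar by threading it along a spiralling trajectory, yielding (iii); and the direction of spinning can be matched to the orientations of $S_1\cap\ga$ so that no $2$-dimensional Reeb annulus on $\ga$ appears, giving (iv). The hypothesis that no component of $R(\ga_i)$ is a compressing torus enters precisely here, since spinning onto such a torus leaf would force either a Reeb component or an irreparable loss of smoothness. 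To obtain both $\mF$ and $\mG$ I would use two distinct spinning models: one tuned to preserve finite depth at every stage, giving (vi) for $\mF$ at the cost of only $C^0$ regularity along the new compact leaves; the other tuned to preserve $C^\infty$ smoothness off the toroidal exceptions named in (v), giving this regularity for $\mG$ at the possible expense of depth. Properties (i)--(iv) follow from the construction in both variants, closing the induction.
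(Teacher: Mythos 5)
This theorem is cited from Gabai's paper and is \emph{not} proved in the present article. The paper only unpacks Gabai's construction in the special case $n=1$ (a single decomposition landing directly on a product), since that is the only case needed for Lemma C; for the full induction the reader is referred to Gabai and to Calegari's exposition. So there is no ``paper's own proof'' to compare against, and you have sketched an argument the article deliberately omits.

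Your outline does capture the broad shape of Gabai's original induction, but as a proof sketch it has gaps and one outright error. The error: after decomposition $S_1^{\pm}$ is attached to $R_{\pm}(\ga_1)$, not $R_{\mp}(\ga_1)$ (see Definition~\ref{def decomp}: $R_+(\ga')\supset S'_+$). The main omission is that you skip the first and structurally load-bearing step of Gabai's argument, namely replacing the given hierarchy by one in which every decomposing surface is \emph{well-groomed}; the spinning model is set up precisely for well-groomed junctures, and without this reduction one cannot control how the spiral collars meet $\ga$ or guarantee (iv). The present paper flags exactly this two-step structure in Subsection~\ref{subsection Gabai's construction} (``first, he shows that there is a series of decompositions along well-groomed surfaces \dots; second, he demonstrates how to produce a $C^0$ foliation~$\mF_{k-1}$\dots''). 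Your account of where the ``no compressing torus'' hypothesis enters is speculative and not how Gabai uses it (it is a tautness/incompressibility condition, not a regularity one), and the remark about ``two distinct spinning models'' for $\mF$ versus $\mG$ is a placeholder rather than an argument: in Gabai's paper the two foliations arise from genuinely different constructions, not two tunings of the same spin, and proving (iii) (tautness) and the exact depth count $\depth(\mF_{k-1})=\depth(\mF_k)+1$ requires a more careful analysis of how transversals propagate through the staircase neighbourhoods than ``threading along a spiralling trajectory.''
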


The ideas from this proof have been exploited a lot; a good summary of the construction is given in \cite[Sec.\,5.6]{Calegari}.  However, understanding the  construction in the case when $n=1$ is essential for the proof of Lemma C, so we explain the ideas in this simplified case.

Let $(M,\ga)$ be a sutured manifold with a properly embedded surface $S$ in $M$, such that $(M,\ga) \leadsto^S (M',\ga')$ gives a product manifold $(M',\ga')$.  Gabai's construction proceeds as follows: first, he shows that there is a series of decompositions along well-groomed surfaces $S_i \subset M_{i-1}$ giving the same product manifold
\[
(M,\ga) \leadsto^{S_1} (M_1,\ga_1) \leadsto^{S_2} \cdots \leadsto^{S_n} (M',\ga');
\]
second, he demonstrates how to produce a $C^0$ foliation $\mF_{k-1}$ on $(M_{k-1}, \ga_{k-1})$ from a finite depth $C^0$ foliation $\mF_{k}$ on $(M_k,\ga_k)$, such that $\depth(\mF_{k-1}) =\depth (\mF_k)+1$.  This means that if  $S$ is well-groomed, starting from the product foliation on $(M',\ga')$ we have a recipe how to construct a taut depth one $C^0$ foliation $\mF=\mF_0$ on $(M,\ga)$.  

\begin{rmk} When we have $(M,\ga) \leadsto^{S_1} (M_1,\ga_1)$, for $(M_1,\ga_1)$ a product manifold, then the construction of $\mF$ coincides with the construction of $\mG$.  This means that $\mF$ is $C^\infty$ except possibly along toroidal components.  However, as we are starting from the product foliation on $(M_1,\ga_1)$, actually $\mF$ is smooth along toroidal components as well.
\end{rmk}

So far we have that if $S$ is well-groomed   the ``only if'' direction of Lemma C follows straight from Theorem \ref{thm Gabai construction}.  Thus, the content of the proof of the ``only if'' direction of Lemma C below is the removal of the condition for $S$ to be well-groomed.  When we remove this condition in Lemma \ref{lemma no conditions}, we do not use Gabai's method of showing that any decomposition can be broken down into the well-groomed ones. Thus we omit the details of this first step in his proof; see Theorem \cite[Thm.\,5.4]{Gabai}.  However, we explain the remaining steps of his construction, as it is crucial to the rest of the paper.

Let us focus on constructing $\mF$ from a product foliation $\mP$ on $(M',\ga')$ when $S$ is well-groomed and nonempty.  Let $V$ be a component of $R(\ga)$.  Recall that $S$ being well-groomed is a condition on $\bdd S \cap V$; in particular, that $\bdd S \cap V$ is a (possibly empty) collection of  parallel, coherently oriented, nonseparating closed curves or arcs.    The construction describes a process of {\it spinning} $S$ near $V$.  It suffices to show how this process works near $V$, as it is analogous for all other components.

Consider each of the following cases: first, $\bdd S \cap V$ is empty; second, $\bdd S \cap V$ is a collection of closed curves; third, $\bdd S \cap V$ is a collection of arcs. For definiteness assume that $V \subset R_+(\ga)$.  
\\ 

\noindent {\bf Case 1.}  Suppose that $\bdd S \cap V$ is empty for some component $V $. Then $V$ is a component of $R_+(\ga')$, but since $(M',\ga')$ is a product manifold $(\Si \times I, \bdd \Si \times I)$, it follows that $V = R_+(\ga')$. As $S_+$ must be contained in $R_+(\ga')$, it follows that $S=\emptyset$, which is a contradiction.  An analogous argument works for $V \subset R_-(\ga)$.  Thus, either $\bdd S \cap V \neq \emptyset$ for all components $V\subset R(\ga)$ or $R(\ga)=\emptyset$.  In the latter case $\ga=T(\ga)$ and $R_\pm(\ga') \cong S_\pm$.  Then $M$ fibres over $S^1$ with fibre homeomorphic to $S$ and $\mF$ is the fibration.
\\

\noindent {\bf Case 2.} Suppose that $\bdd S \cap V$ is a collection of parallel, coherently oriented, nonseparating closed arcs.  Denote by $\mJ$ the collection of arcs of $\bdd S \cap V$ and label the arcs $J_1, \ldots, J_l$ starting with one outermost arc, and proceeding in the obvious way.  Subsection \ref{subsection junctures} gives the motivation for using the letter $J$, which stands for {\it juncture}. We first explain how to construct $\mF$ near $V$ when $l=1$.  It is then fairly clear how to extend the construction to the general case.

\begin{figure}[h]
\centering
\includegraphics [scale=0.50]{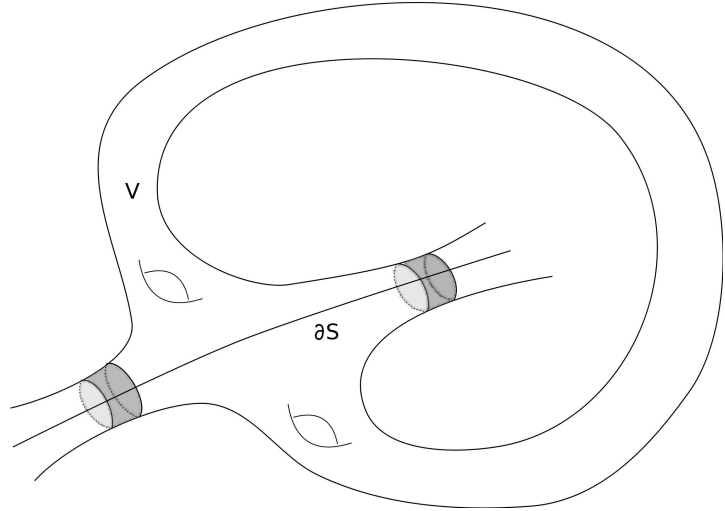}
\caption{Example where $\bdd S \cap V$ is a single arc.  The shaded annuli are two annular components of $\ga$.}
\label{fig ArcOnSurface}
\end{figure}

Suppose that $\bdd S \cap V$ is a single arc $J$; for an example see Figure \ref{fig ArcOnSurface}.   When decomposing along $S$, we remove a neighbourhood $N(S) \cong S\times I$, where $S \times \bdd I=S_+ \cup S_-$; see Definition \ref{def decomp} for the labelling convention.  Define $J^+:= S_+ \cap V$ and $J^-:=S_-\cap V$.  Consider a neighbourhood $N:=J \times (-2,0)$ of $J$ in $V$, parametrised so that $J^-=J \times 0$ and $J^+=J \times -1$ (Figure \ref{fig JNeighbourhood}). The reason for parametrising an interval as $(-2,0)$ becomes clear shortly.

\begin{figure}[h]
\centering
\includegraphics [scale=0.6]{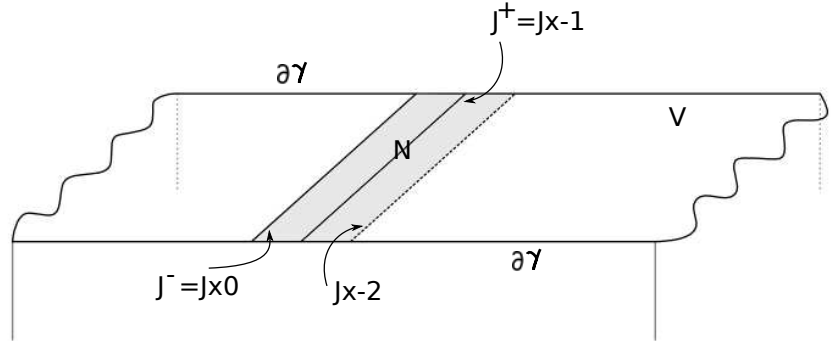}
\caption{A parametrised neighbourhood of $\bdd S \cap V=J$.}
\label{fig JNeighbourhood}
\end{figure}

Start with the product foliation $\mP$ on $(M',\ga')$.  Glue $(M',\ga')$ along $S_+$ and $S_-$ to recover $(M,\ga)$, in such a way that $J^+$ is a concave corner, whereas $J^-$ is a convex corner in $V$ (Figure \ref{fig ConvexConcave}).  
\begin{figure}[h]
\centering
\includegraphics [scale=0.6]{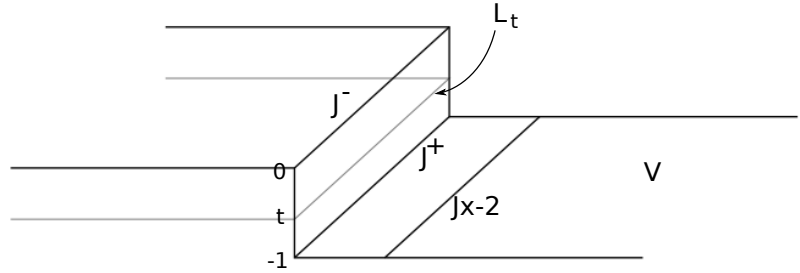}
\caption{A leaf $L_t$ of the product foliation $\mP$ is transverse to $V$ after the regluing of $S_+$ and $S_-$.  }
\label{fig ConvexConcave}
\end{figure}  
  
Let $K$ be the product manifold $(V \setminus N) \times [0,\infty]$ with the product foliation; that is, the leaves are of the form $(V \setminus N) \times s$, for $s \in [0,\infty]$.  The manifold obtained by gluing $K$ to $M$ in the obvious way
 \[
M \cup_{(V \setminus N) \sim (V\setminus N) \times 0}  K
\] 
is homeomorphic to $M$, but it has a ``ditch'' with two ``walls'' $W_0$ and $W_1$ (Figure \ref{fig ConstructingTheFoliation}).  Note that $W_0= J  \times -1 \times [-1,\infty]$ and $W_1=J \times -2 \times [0,\infty]$.  It is now evident why we chose the parametrisation $(-2,0)$: so that the last coordinate of $W_0$ is parametrised as $[-1, \infty]$.

\begin{figure}[h]
\centering
\includegraphics [scale=0.60]{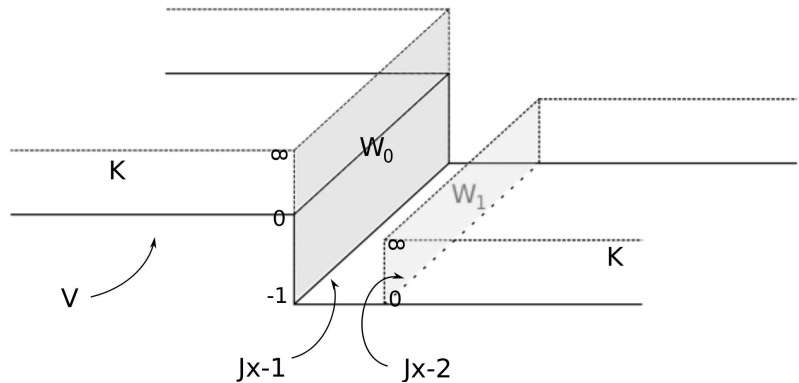}
\caption{The manifold $M \cup K \cong M$.}
\label{fig  ConstructingTheFoliation}
\end{figure}  

There is now a foliation $\mF'$ of $M \cup K$ obtained from $\mP$ union the product foliation on $K$, however it is not a foliation of the sutured manifold $(M,\ga)$.  The leaves of $\mF'$ are transverse to the walls and tangential everywhere else on $V$.  It remains to glue a product manifold $D$ parametrised as $J \times [-1,-2] \times [0,\infty]$ with a product foliation whose leaves are parallel to $J \times [-2,-1]$.  The gluing is done by making pointwise identifications:
\begin{enumerate}
\item $W_0 \sim J \times -1 \times [0,\infty]$ via the equivalence relation
\begin{equation} \label{shift one}
(x ,-1, t) \sim (x, -1, t+1),
\end{equation}
\item $J \times -2 \times [0,\infty] \sim W_1$ via the (identity) equivalence relation $(x, -2,t) \sim (x, -2, t).$
\end{enumerate}
So the ditch has been filled in with a product neighbourhood in such a way that the new foliation $\mF$ is of depth one and is tangent to $V$.

Note that the leaf $L_t \subset \mP$ from Figure \ref{fig ConvexConcave} has now become a leaf that ``spirals'' onto $V$.  See Figure \ref{fig TheLeaf} for a two-dimensional representation of what $L_t$ looks like near $J$ as it spirals onto $V$.
\begin{figure}[h]
\centering
\includegraphics [scale=0.9]{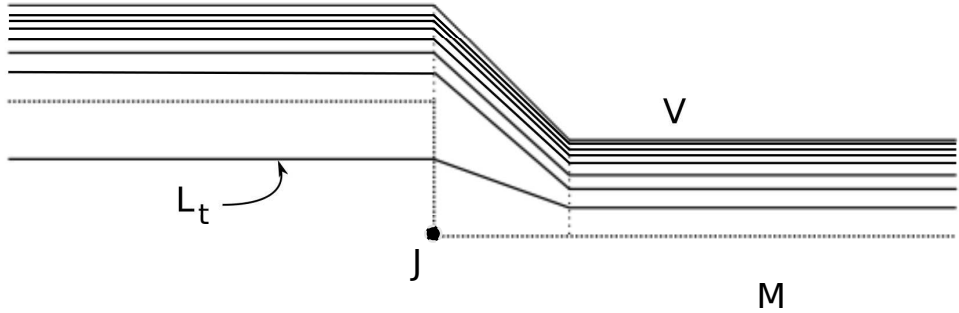}
\caption{A depth one leaf $L_t$ of the $\mF$ spiralling onto $V$.}
\label{fig  TheLeaf}
\end{figure}  

The identification \eqref{shift one} ensures that two interior leaves $L_t$ and $L_s$ of $\mP$ are {\it spun} onto $V$ in such a way that their distance along a transverse arc remains the same, and hence there can be no holonomy along the noncompact leaves.   This means that we have constructed the (depth one) foliation $\mF$ of $(M,\ga)$. The manifold $K \cup J \times [-1,-2] \times [0,\infty]$ is  a so called {\it spiral staircase neighbourhood} of $V$ for the foliation $\mF$ as is explained in Subsection \ref{subsection junctures}.

In the general case, since the construction takes place in a collar neighbourhood of a component of $R(\ga)$, it can be repeated for any number of components $V$ for which $\bdd S \cap V$ is nonempty and the intersection consist of arcs.  Further, if $\mJ=\bdd S \cap V $ contains $l >1$ arcs, then a simple modification of the above construction yields a depth one foliation $\mF$ of $(M,\ga)$.  More specifically, it suffices to replace $J$ by $\cup_{i=1}^l J_i$ and to repeat the same gluings of the walls in each of  the $l$ ditches $D_i$, labeled after their associated junctures $J_i$.  The key observation is that since the arcs are well-groomed and the identification in \eqref{shift one} is specified to induce no holonomy, the depth of each noncompact leaf is one.  Then the spiral staircase neighbourhood becomes a ``staircase'' indeed, with $l$ steps.
\\ 

\noindent {\bf Case 3.}  Suppose that $\bdd S \cap V$ is a collection of parallel, coherently oriented, nonseparating closed curves.  The same construction as described above works, only the $J_i$-s are now going to be parallel closed curves, as opposed to arcs.

\begin{rmk}
The described construction slightly differs from the one given by Gabai originally in \cite[Thm,\,5.1]{Gabai}.  For closed curves instead of gluing a copy of 
\[
(V \setminus \cup_i J_i \times (-1,1)) \times [0,\infty],
\]
 he finishes off the foliation using a ``1/2 infinity cover'' of $V$. Here it is more convenient to use the ``creating ditches'' method in both cases, as it is easier to see why the well-groomed condition can be removed.
\end{rmk}

From now on we refer to the construction described in this subsection as {\it Gabai's construction}.

\subsection{Junctures and spiral staircases} \label{subsection junctures}

 In the previous subsection we discussed Gabai's method for constructing a taut depth one foliation $\mF$ from a well-groomed surface decomposition $(M,\ga) \leadsto (M',\ga')$ that results in a product $(M',\ga')$. This construction is crucial in proving the ``only if'' direction of Lemma C.  The connected components of the 1-manifold $\bdd S \cap \bdd M$ are called the {\it junctures} of the foliation $\mF$.  We now explain the role that junctures play in the spiralling of leaves of a depth one foliation along the components of $R(\ga)$, as understanding this is crucial in proving the ``if'' direction of Lemma C.  The following terminology and theory can be found in \cite{Foli1} and \cite[Sec.\,2]{Handel-Miller}.

  Let $\{K_\al\}_{\al\in A}$ be the family of all compact subsets of $L$, and let $\{U_\al\}_{\al \in A}$ be the family of sets $U_\al:=L \setminus K_\al$.  Then consider descending chains $\{U_{\al_i}\}_{i=1}^\infty$ of the  form
\[
 U_{\al_1} \supsetneq U_{\al_2} \supsetneq \cdots \supsetneq U_{\al_n} \supsetneq \cdots,
\]
which satisfy the following condition 
\[
\bigcap_{i=1}^\infty U_{\al_k}=\emptyset.
\]
Two such chains, $\mU:=\{U_{\al_i}\}_{i=1}^\infty$ and $\mV:=\{V_{\be_i}\}_{i=1}^\infty$ are related, and their relation denoted by $\mU \sim \mV$, if for each $i \geq 1$ there exists an $n >i$ such that 
\[
U_{\al_i} \supset V_{\be_n} \textrm{ and } V_{\be_i} \supset U_{\al_n}.
\]
Clearly this is an equivalence relation.  An equivalence class of such descending chains is called an {\it end} of $L$.  Any descending chain $\mU$ that belongs to the equivalence class of an end $e$ is said to be a {\it fundamental open neighbourhood system} of $e$.   If an open set $U \subset L$ contains a fundamental open neighbourhood system of $e$, then $U$ is said to be a {\it neighbourhood} of $e$. Denote by $\mE(L)$ the set of ends of $L$.

Let $f \colon L \to L$ be a homeomorphism.  Then an end $e$ of $L$ is said to be {\it cyclic of period $p$} if $f^p(e)=e$ and $p$ is the least such integer.  An end $e$ is called {\it attracting} ({\it repelling}) if there is a neighbourhood $U$ of $e$ and an integer $n>0$ ($n<0$) such that $f^n(\overline U) \subset U$ and $\bigcap_{i=1}^\infty f^{n}(U) =\emptyset$, where $\overline U$ denotes the closure of $U$ in $L$.  Attracting and repelling ends are called {\it periodic ends}.  

It is clear that periodic ends are cyclic, but the converse is not be true in general; see \cite[p.\,4]{Handel-Miller}.  If a homeomorphism $f \colon L \to L$ is such that all cyclic ends are periodic, then $f$ is called an {\it endperiodic homeomorphism} of $L$.

\begin{lemma} \label{lemma cyclic}
Let $\mF$ be a taut depth one foliation of the sutured manifold $(M,\ga)$.  Then every depth one leaf $L$ has finitely many ends.  Hence, for any homeomorphism $f \colon L \to L$ every end $e \in \mE(L)$ is cyclic. 
\end{lemma}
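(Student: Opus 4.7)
The last sentence is immediate: if $|\mE(L)| = N < \infty$ then any self-homeomorphism of $L$ permutes the finite set $\mE(L)$, so every end has orbit of size dividing $N!$ and is cyclic. The content of the lemma is thus the finiteness of $\mE(L)$.

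\textbf{Plan.} The plan is to use Lemma \ref{lemma fibring} to realise $L$ as a closed, connected, properly embedded smooth $2$-submanifold of $M_0$ (a fibre of $\pi \colon M_0 \to S^1$), and then exhibit a compact $K \subset L$ whose complement splits into only finitely many components, each with a unique end. Let $V_1,\dots,V_m$ denote the components of $R(\ga)$; these are finitely many compact surfaces. Fix pairwise disjoint smooth closed collar neighbourhoods $N_i \iso V_i \times [0,1]$ with $V_i = V_i \times \{0\}$. With a standard handling of the corners $\bdd V_i \subset \ga$, the set $K := L \cap (M \setminus \bigsqcup_i V_i \times [0,\tfrac12))$ is a compact subset of $L$, so the ends of $L$ are represented by non-compact components of $L \cap \bigsqcup_i V_i \times (0,\tfrac12)$. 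It therefore suffices to bound the component count of $L \cap V_i \times (0,\tfrac12)$ for each $i$ separately.

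\textbf{Transversality near $V_i$.} By Sard's theorem, pick a regular value $t_0 \in (0,\tfrac12)$ of the smooth height function $(x,t) \mapsto t$ restricted to the $2$-submanifold $L \cap N_i$. Then $V_i \times \{t_0\}$ is transverse to $L$, so $L \cap V_i \times \{t_0\}$ is a compact $1$-submanifold of $V_i$ with finitely many components. The key observation is that every connected component $C$ of $L \cap V_i \times (0, t_0]$ must meet $V_i \times \{t_0\}$: otherwise $C$ would be simultaneously open (as a component of the open subset $L \cap V_i \times (0, t_0)$ of $L$) and closed (as a component of the closed subset $L \cap V_i \times (0, t_0]$ of $L$) in the connected manifold $L$, forcing the degenerate ``localisation'' $L \subset V_i \times (0, t_0)$. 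Setting this possibility aside, the components of $L \cap V_i \times (0, t_0]$ then inject into the finite set $\pi_0(L \cap V_i \times \{t_0\})$, and each non-compact component is a spiral end accumulating on $V_i$ contributing exactly one end of $L$. Summing over $i$ yields $|\mE(L)| < \infty$.

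\textbf{Main obstacle.} The delicate step is dispatching the degenerate localisation $L \subset V_i \times (0, t_0)$. The cleanest route is to represent the class $\lambda(\mF)|_{V_i} \in H^1(V_i; \bZ)$ by a smooth map $g_i \colon V_i \to S^1$ coming from restricting the closed $1$-form on $M_0$ dual to $\pi$: then $L \cap N_i$ is identified with a finite disjoint union of lifted level sets of $g_i$, whose finite topological type is immediate from the compactness of $V_i$. This gives the end bound directly in the degenerate case and, in combination with the transversality step, completes the proof.
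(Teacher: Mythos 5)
Your approach is genuinely different from the paper's. You work inside closed collars $N_i \cong V_i \times [0,1]$ of the components $V_i$ of $R(\ga)$, applying Sard's theorem to a height function to get a transverse slice $V_i \times \{t_0\}$ and then an open-closed dichotomy to bound component counts. The paper instead restricts the foliation to the \emph{annular} components $\de$ of $\ga$, where $\mF_\de$ is a depth-one foliation of an annulus with noncompact leaves $\cong (0,1)$; it relates $|\mE(L)|$ to $\sum_\de |\pi_0(L \cap \de)|$, and shows each summand is finite by a direct accumulation argument on a transverse arc: if $L\cap\de$ had infinitely many components they would cluster on some compact subarc $xy$, contradicting depth one. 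Both routes try to reduce the end count to a finite intersection count near the compact leaves, but the paper exploits the one-dimensional depth-one structure on $\ga$ directly, while yours is a smooth-topology argument in a three-dimensional collar.

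There is, however, a genuine gap in your argument. You establish that $\pi_0\bigl(L \cap V_i\times(0,t_0]\bigr)$ is finite, i.e.\ that $L\setminus K$ has finitely many components for the chosen compact $K$. That by itself does not bound $|\mE(L)|$: the set of ends is the inverse limit of $\pi_0(L\setminus K_n)$ over a compact exhaustion, and having each term finite is not enough (the Cantor tree surface has $\pi_0(L\setminus K)$ finite for every compact $K$ but uncountably many ends). The missing step is exactly your unproved assertion that ``each non-compact component is a spiral end\,\dots\,contributing exactly one end of $L$'' --- i.e.\ that a non-compact component of $L\cap V_i\times(0,t_0]$ is itself one-ended. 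Establishing this requires knowing the local spiralling structure near $V_i$ (the juncture/spiral-staircase picture of Subsection~\ref{subsection junctures}), which is the content one is trying to set up; the paper avoids this by working with $L\cap\de$, where each component is literally an arc and the end count reduces to a boundary-component count. Your ``Main obstacle'' paragraph is also unconvincing as written: $V_i$ is a compact leaf lying in $\bdd M$ and not in $M_0$, so the restriction $\la(\mF)|_{V_i}$ and the proposed identification of $L\cap N_i$ with lifted level sets of a circle-valued map $g_i$ would need justification that is not supplied.
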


\begin{proof}   Let $\pi \colon M_0 \to S^1$ be the fibration induced by $\mF$; see Lemma \ref{lemma fibring}.  For every component $\de$ in $\ga$, the foliation $\mF$ induces on $\de$ either a fibration or a depth one foliation $\mF_{\de}$ with two compact leaves and noncompact leaves homeomorphic to the open interval $(0,1)$.   The number of ends of a noncompact leaf $L$ is equal to the number of components of $L \cap \de$ summed over all $\de \subset \ga$ when $\mF_\de$ is of depth one. Note that if $\de \subset T(\ga)$ then $\mF_\de$ is a fibration.

Let $\de \subset A(\ga)$ and so $\mF_\de$ is of depth one.  Let $a$ be an arc transverse to $\bdd \de$ with one endpoint in each component of $\bdd \de$.  Let $l$ be a depth one leaf of $\mF_\de$.  Then $l \cap a$ is a collection of points $\mA$ with limit points at $\bdd a$.  Choose two points $x,y \in \mA$, so that the line segment from $x$ to $y$ in $a$, denoted by $xy$, contains no other points of $\mA$.  Then any other leaf $l'$ of $\mF_\de$ intersects $xy$ in precisely one point.  So if a noncompact leaf $L$ of $\mF$ intersects $\de$ in infinitely many leaves of $\mF_\de$, then $\abs{\bdd L \cap xy}$ is also a infinite collection of points.  As $xy$ is a compact interval, the points $\{ \bdd L \cap xy\}$ have a limit point in the interval $xy$, thus contradicting the fact that $\mF_\de$ is of depth one.
\end{proof}

Let $\mL$ be a 1-dimensional foliation transverse to a taut depth one foliation $\mF$ of a sutured manifold $(M,\ga)$. All smooth, codimension-1 foliations $\mF$ admit at least one such transverse foliation $\mL$.  If $L$ is a depth one leaf, define $f \colon L \to L$ to be the first return map given by $\mL_f:=\mL$.  Then $f$ is {\it endperiodic}, that is, all of the (cyclic) ends of $L$ are periodic \cite{Fenley}.  In combination with Lemma \ref{lemma cyclic} this means that all ends of a taut depth one foliation are periodic.

Next, we review some terminology related to ``spiralling'' in of leaves and ends.  Fix a depth one leaf $L$.  As before let $K_\al$ be the family of all compact subsets of $L$.   Let $U_\al:=L \setminus K_\al$, and denote by $\overline U_\al$ the closure of $U_\al$ in $M$.  Then the set $\lim L:=\bigcap_{\al \in A} \overline U_\al$ is a compact, non-empty, $\mF$-saturated set \cite[Lem.\,4.3.2]{Foli1}.  Thus we say that $L$ is {\it asymptotic} to a leaf $F$ if $F \subset \lim L$.  

Similarly, if $\mU:=\{U_{\al_i}\}_{i=1}^\infty$ is a fundamental open neighbourhood system of an end $e$ of $L$, and $\overline U_{\al_i}$ the closure of $U_{\al_i}$ in $M$, then $\lim_e L:= \bigcap_{i=1}^\infty \overline U_{\al_i}$.  Again, \cite[Lem.\,4.3.5]{Foli1} says that for each leaf $L$ of $\mF$ and each end $e \in \mE(L)$, the set $\lim_e L$ is a compact, nonempty $\mF$-saturated set, not depending on the choice of fundamental neighbourhood system of $e$.  Thus we also say that the end $e$ is {\it asymptotic} to a leaf $F \subset \lim_e L$.

Finally, we arrive at the definition of junctures.  Let $e$ be an attracting, cyclic end of $L$ of period $n_e$, and let $\mU=\{U_i\}_{i=1}^\infty$ be a fundamental open neighbourhood system.  Define $\overline U_i$ to be the closure of $U_i$ in $L$.  From the definition of a neighbourhood of $e$, we know that the closure of $\bdd U_i \setminus \bdd L$ is a compact 1-manifold that separates $L$.  It follows that the sets $B_i:= \overline U_i \setminus U_{i+1}$ are closed, not necessarily compact, subsurfaces of $L$, such that $\bdd B_i \setminus \bdd L$ is also a compact 1-manifold.  Moreover, since the end is cyclic, it follows that $f^{n_e}(B_i) =B_{i+1}$ and that $\overline U_i = B_i \cup B_{i+1} \cup \cdots, \forall i \geq 0$.  The subsurfaces $B_i$ are called {\it fundamental domains} for the attracting end.  

\begin{deff}
The compact 1-manifolds $J_i:=\overline U_i \setminus U_i$, for $0 \leq i < \infty$ are called positive (negative) junctures for an attracting (repelling) end $e$. 
\end{deff}

Note that the connected components of junctures can be both closed curves and properly embedded arcs.  

Actually, one can also define a set of curves on $R(\ga)$ called junctures.  Let $V$ be a component of $R(\ga)$.  As before, suppose that $\mF$ is a taut depth one foliation of $(M,\ga)$, so certainly $V$ is a compact leaf of $\mF$.  Let $e$ be an end of a depth one leaf $L$ which is asymptotic to $V$.  Then if $f \colon L \to L$ is the first return map, it follows from Lemma \ref{lemma cyclic} that $f$ is endperiodic with some period $n_e$.  Therefore, as above, we can find a set of junctures $J_i$ starting from a neighbourhood $U$ of $e$.  As $f$ is the first return map defined by a transverse 1-dimensional foliation $\mL_f$, flowing along $\mL_f$ each $J_i$ is carried into $J_{i+t}$ at time $t \in \bN$.  More specifically, $f$ defines a semi-map (local homeomorphism with path lifting property) $p \colon U \to V$ which takes each $B_i$ (locally) onto $V$, and which maps each $J_i$ to $J_{i+1}$. Therefore, $J_V:=p(J_i)$, for $i \geq 0$, is a well-defined 1-manifold in $V$, and is also referred to as a {\it juncture} of $\mF$ in the component $V$ of $R(\ga)$.  Note that the construction of the juncture depends on the choice of the fundamental neighbourhood system $\mU$.

Using junctures one may easily describe the behaviour of a taut depth one foliation near a compact leaf.  As always, let $V$ be a component of $R(\ga)$.  Then there exists a {\it spiral staircase (neighbourhood)} $\mN_V$ associated to $\mF$ and $\mL_f$ \cite[Sec.\,12.2.2]{Handel-Miller}.  Again suppose that $L$ is a noncompact leaf of $\mF$, and $e$ is an end of $L$ that is asymptotic to $V$.  For the sake of definiteness suppose that $e$ is an attracting end.  Using the same notation as before, suppose that $\mU$ is a fundamental neighbourhood system of $e$, and for some $i$, $J_i$ is a positive juncture for $e$.  Then there is a surface $T_i \cong J_i \times [0,1]$  transverse to $\mF$ such that for each point $x \in J_i$, the arc $x \times [0,1] \in T_i$ is contained in a flowline of $\mL_f$.  Note that $T_i \subset T_{i+1}$. The surface $B_i \cup T_i$ separates $M$ into two connected components; the component which contains $V$ is called a {\it spiral staircase neighbourhood} denoted by $\mN_V$, or denoted by $\mN_V^{\mU,i}$ if we wish to emphasise the choice of $\mU$ and $i$.  Observe that $\mN_V^{\mU, i+1} \subset \mN_V^{\mU,i}$, so the choice of $i$ can be seen as affecting the ``size'' of the neighbourhood; in other words, any given collar neighbourhood of $V$ contains $\mN_V^{\mU,i}$ for any choice of $\mU$ and for all $i>i_0$, for some $i_0$ large enough.

\subsection{Foliation cones of Cantwell and Conlon} \label{subsection foliation cones}  Let $N$ be a compact, connected $n$-manifold and let $\om \in \Om^1(N)$ be a closed, nonsingular 1-form.  Then $\om$ defines a codimension-1 foliation $\mF$ of $N$.  Thus we call $\om$ a {\it foliated form}.  Note that by Tischler's theorem \cite[Thm.\,9.4.2]{Foli1} $N$ admits such a form if and only if $N$ fibres over $S^1$.  Now suppose that $\mF$ is a depth one foliation of a sutured manifold $(M,\ga)$.  Then there is a fibration $p\colon M_0 \to S^1$.  So the foliation determines a {\it foliated class} $[\om] \in H^1(M;\bR)$, where $\om := p^*(dt) \in \Om^1(M_0)$ is the pullback of  the standard form $dt \in \Om^1(S^1)$ via the fibration $p$.    Clearly $\om$ defines a foliation $\mF_0:=\mF|M_0$.  In particular, $\om$ ``blows up nicely'' at $R(\ga)$ \cite[p.\,3.9] {CC99}, which means that $\mF_0$ can be completed to the foliation $\mF$ by adjoining the connected components of $R(\ga)$ as leaves.  

Let $\om \in H^1(M;\bR)$ and let $\mF$ be a foliation determined by $\om$.  Cantwell and Conlon define a {\it foliated ray} $[\mF]$ to be a ray $t \cdot \om$ for $t \in \bR$,  $t>0$ in $H^1(M;\bR)$ issuing from the origin \cite[Cor.\,4.4]{CC99}.   If $\mF$ is a depth zero or depth one  foliation, then the foliation ray is called a {\it proper foliated ray} \cite[p.\,36]{CC99} and $[\mF]$ is then given by  $[\mF]=\{t \cdot \la(\mF)\}_{t >0}$, since $\la(\mF)$ is precisely the foliated form determining $\mF$.  

In general, a foliated form defines foliations $\mF$ which are tangent to $R(\ga)$, have no holonomy, and their leaves are dense in $M_0$ \cite{Hector}.   Moreover, any smooth foliation with holonomy only along $R(\ga)$ is $C^0$ isotopic to a foliation defined by a foliated form \cite[Cor.\,4.4]{CC99}.  If $R(\ga) =\emptyset$, then the foliated ray determines the foliation up to smooth isotopy \cite{LB,QR}.  If a foliated form corresponds to an integral lattice point of $H^1(M;\bR)$, then it determines a depth zero or depth one foliation, up to equivalence (see Theorem \ref{thm equivalence}).  Cantwell and Conlon have shown that in all other cases the foliated ray also determines the foliation up to smooth-leaved isotopy \cite{CCOpen}.

\begin{thm} \cite[Thm.\,1.1]{CC99} \label{thm CCcones}
 Let $(M,\ga)$ be a sutured manifold.  If there are depth zero and depth one foliations $\mF$ of $(M,\ga)$, then there are finitely many open, convex, polyhedral cones in $H^1(M)$, called foliation cones, having disjoint interiors and such that the foliated rays $[\mF]$ are exactly those lying in one of these cones.  The proper foliated rays are exactly the foliated rays through points of the integral lattice and determine the corresponding foliations up to isotopy.
\end{thm}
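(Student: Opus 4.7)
My plan is to prove the theorem via the Ruelle--Sullivan theory of foliation currents, which translates the existence of foliated forms into a positivity condition dual to a finitely generated cone. Fix a smooth one-dimensional foliation $\mL$ tangent to $\ga$ and transverse to some reference depth one (or depth zero) foliation of $(M,\ga)$, with the property that the leaves of $\mL$ inside $M_0$ either recur or spiral into $R(\ga)$; Lemma~\ref{lemma fibring} guarantees that such an $\mL$ exists. To any $\mL$-invariant transverse measure one associates, via the Ruelle--Sullivan construction, a class in $H_1(M;\bR)$, and the closure of the convex hull of these classes is a cone $\mathcal{C}(\mL)\subset H_1(M;\bR)$ of \emph{structure cycles}. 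The key observation is that a closed $1$-form $\om$ is everywhere positive on $\mL$ (equivalently, nonsingular and transverse to $\mF$) if and only if its cohomology class $[\om]\in H^1(M;\bR)$ pairs strictly positively with every nonzero element of $\mathcal{C}(\mL)$. So the set of foliated classes compatible with $\mL$ is the open dual cone $\mathcal{C}(\mL)^\vee$.

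The next step is to show $\mathcal{C}(\mL)$ is polyhedral, i.e.\ finitely generated. I would argue this by constructing, from $\mL$ and the reference foliation, a branched surface $B\subset M$ carrying every foliation transverse to $\mL$ whose class lies in $\mathcal{C}(\mL)^\vee$. The branch locus together with the spiral staircase neighborhoods $\mN_V$ of the components $V\subset R(\ga)$ (described in Subsection~\ref{subsection junctures}) provides a finite combinatorial cross-section for $\mL$: the first-return dynamics along $\mL$ on a finite family of disks transverse to the branch sectors is encoded by a finite directed graph. Invariant transverse measures for $\mL$ therefore correspond to nonnegative cycles in this graph, and these form a finitely generated cone. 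Projecting by Ruelle--Sullivan gives that $\mathcal{C}(\mL)$ is finitely generated, so $\mathcal{C}(\mL)^\vee$ is an open convex polyhedral cone. I expect this polyhedrality step to be the principal technical obstacle, since one must check carefully that the branched surface has enough sectors to carry \emph{every} nearby foliated form and yet not so many as to destroy finiteness; controlling the spiraling into $R(\ga)$ via junctures is precisely what makes this possible in the sutured setting.

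Letting $\mL$ vary, one obtains a family of open polyhedral cones $\{\mathcal{C}(\mL_i)^\vee\}$ covering the set of all foliated classes. Two such cones can only overlap if their relative interiors represent compatible foliations; but a smooth foliation tangent to $R(\ga)$ with no holonomy is determined up to $C^0$ isotopy by its cohomology class (this is the result of Hector and of Laudenbach--Blank/Quinn--Rourke cited in Subsection~\ref{subsection foliation cones}), so the relative interiors of the cones must be disjoint. Finiteness of the collection follows because there are only finitely many combinatorial types of branched surfaces that can carry a foliation with the given compact leaves $R(\ga)$, up to isotopy of $M$. This yields the first half of the theorem.

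Finally, for the identification of proper foliated rays with integral rays: if $[\om]$ is a rational class in one of the cones, clearing denominators gives an integral form $\om'$, which by a Tischler-type argument defines a fibration $\pi\colon M_0\to S^1$ extending to either a depth zero foliation (when $R(\ga)=\emptyset$) or a depth one foliation whose compact leaves are $R(\ga)$. Conversely every depth zero or depth one foliation of this type arises from such an integral form. Uniqueness up to equivalence is exactly the content of Theorem~\ref{thm equivalence}, which finishes the proof. The rational/irrational dichotomy is handled uniformly by noting that only integral classes produce closed leaves, so non-integral classes in the interior of a cone yield foliations with all noncompact leaves dense in $M_0$, again determined up to smooth-leaved isotopy by the ray.
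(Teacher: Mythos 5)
This theorem is \emph{cited} in the paper as \cite[Thm.\,1.1]{CC99} and used as a black box; the paper contains no proof of it, so there is no in-paper argument to compare against. The paper does remark that Cantwell and Conlon prove the existence of foliation cones via foliation currents (in the sense of Sullivan and Schwartzmann), and your sketch is faithful to that strategy: fix a transverse one-dimensional foliation $\mL$, take the Ruelle--Sullivan cone of structure cycles in $H_1(M;\bR)$, and identify the foliated classes compatible with $\mL$ as its open dual cone. The branched-surface/juncture reduction to a finite directed graph (hence finitely many extremal invariant measures, hence polyhedrality) is likewise the right technical engine, and you are correct to flag it as the principal obstacle.

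There is, however, a genuine gap in the way you argue disjointness of the cone interiors. You claim: if a class lay in the interior of two cones, the corresponding foliations would agree by the isotopy-uniqueness results, hence the interiors must be disjoint. This does not follow. If a class $[\om]$ lies in $\mathcal{C}(\mL_1)^\vee\cap\mathcal{C}(\mL_2)^\vee$, uniqueness only says that both cones assign the \emph{same} foliation to $[\om]$, which is consistent rather than contradictory; a single foliation can perfectly well be transverse to two different one-dimensional foliations $\mL_1,\mL_2$. In Cantwell--Conlon's actual argument the cones are cut out so that they tile the set of foliated classes, and disjointness of interiors is a structural consequence of the combinatorics of the branched surfaces (and of a suitable maximality in the choice of the transverse foliations), not of the uniqueness theorem for foliated classes. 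You would need to replace your appeal to Theorem \ref{thm equivalence} and the Laudenbach--Blank/Qu\^e--Roussarie results here with an argument at the level of the carrying branched surfaces. The final paragraph identifying proper foliated rays with integral rays and invoking Theorem \ref{thm equivalence} for uniqueness is fine.
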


Let  $\fC (M,\ga)$ denote the interior of the foliation cones in $H_2(M,\bdd M;\bR)$ obtained by Poincar\'e duality.

\section{Duality of the sutured Floer polytope and the foliation cones} \label{section duality}

As was remarked in Section \ref{section intro}, the moral of Lemma C is known to experts \cite{Conlon private, Gabai private}, but the author was unable to find any written references.  Cantwell and Conlon are preparing a paper exploring the relationship of sutured manifold decomposition and foliations from the perspective of staircases and junctures \cite{CCSmooth} that will include a proof of Lemma C.  Nonetheless, for the reader's convenience, we give our own proof of Lemma C, together with the minimal necessary background from foliation theory.

 Therefore, the first part of this section is dedicated to the ``classical'' topology of taut depth one foliations and the proof of Lemma C using Gabai's construction and the theory of spiral neighbourhoods given in Subsections \ref{subsection Gabai's construction} and \ref{subsection junctures}.  The second part of this section covers the proofs of Theorems A and B.  We conclude the paper by discussing some concrete examples that illustrate the duality of Theorem B.

\subsection{The classical topology of depth one foliations on sutured manifolds}

Let $(M,\ga)$ be a sutured manifold as defined by Gabai.

\begin{lemC} 
Suppose $(M,\gamma)$ is a connected sutured manifold.  Let $(M,\ga)\leadsto^S (M',\ga')$ be a surface decomposition along $S$ such that $(M',\ga')$ is taut.  Then $(M',\gamma')$ is a connected product sutured manifold
if and only if either
\begin{enumerate}
\item $R(\ga)=\emptyset$ and $S$ is the fibre of a depth zero foliation $\mF$ given by a fibration $\pi \colon M \to S^1$, \\
or
\item $R(\ga) \neq \emptyset$ and $S$ can be spun along $R(\ga)$ to be a leaf of a depth one foliation $\mF$ of $(M,\ga)$.
\end{enumerate}
Up to equivalence, all depth zero and depth one foliations of $(M,\ga)$ are obtained from a surface decomposition resulting in a connected product sutured manifold.
\end{lemC}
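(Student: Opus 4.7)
My plan is to prove both directions of the biconditional and then dispose of the ``up to equivalence'' clause. For the forward direction (``cutting along $S$ gives a connected product $\Rightarrow$ foliation exists'') I would invoke Gabai's construction from Subsection \ref{subsection Gabai's construction}. When $R(\ga)=\emptyset$, $\ga=T(\ga)$ consists of tori only, and $(M',\ga')=(\Si\times I,\bdd\Si\times I)$ has $R_\pm(\ga')=S_\pm\cong\Si$; regluing $S_+$ to $S_-$ via the product structure realises $M$ as a mapping torus, giving a fibration $\pi\colon M\to S^1$ with $S\cong\Si$ as fibre, and the product foliation descends to the depth zero foliation $\mF$. When $R(\ga)\neq\emptyset$ I would apply Gabai's construction in the one-step case $n=1$: starting with the product foliation on $(M',\ga')$, regluing along $S_+,S_-$ creates convex and concave corners along the components of $\bdd S\cap R(\ga)$, and filling in the resulting ``ditches'' by factors $(V\setminus N)\times [0,\infty]$ together with the identification \eqref{shift one} yields a depth one foliation $\mF$ for which a spinning of $S$ along $R(\ga)$ is a leaf. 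The well-groomed hypothesis of Theorem \ref{thm Gabai construction} will have to be removed as a preliminary step, but the ditch construction is local near each component of $\bdd S\cap R(\ga)$ and only requires each such component to be a simple closed curve or a properly embedded arc, so the coherence-of-orientations condition is unnecessary.

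For the reverse direction (``foliation exists $\Rightarrow$ cutting along $S$ gives a connected product''), in case (1) $S$ is the fibre of the fibration $\pi\colon M\to S^1$, and $M$ cut along $S$ is classically $S\times I$, a connected product. In case (2), let $L$ denote the depth one leaf of $\mF$ obtained by spinning $S$, so that conversely $S$ is the compact fundamental domain of the first return map on $L$ obtained by truncating $L$ at suitable junctures in each component $V$ of $R(\ga)$. I would pick a one-dimensional foliation $\mL$ transverse to $\mF$ and tangent to $\ga$; by Lemma \ref{lemma fibring}(iv), $\mF|M_0$ fibres $M_0$ over $S^1$ and comes with a flow $\Phi_t$ along $\mL$. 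By Lemma \ref{lemma cyclic}, every end of $L$ is cyclic and hence periodic under $f\colon L\to L$, with associated junctures $J_V\subset V$. Cutting $M$ along $S$ slices $M_0$ into the mapping cylinder of $f|S$, which via $\Phi_t$ trivialises as $S\times[0,1]$; the portion near each component $V$ of $R(\ga)$ is controlled by a spiral staircase neighbourhood $\mN_V$ from Subsection \ref{subsection junctures}, which encodes precisely the product structure after cutting. Assembling these pieces shows $M'\cong S\times I$ is connected, and the sutured structure $\ga'$ is forced by $\bdd S$.

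For the final clause, any depth zero or depth one foliation $\mF$ of the stated type determines a class $\la(\mF)\in H^1(M;\bR)$ via \eqref{lambda}, and by Theorem \ref{thm equivalence} the equivalence class of $\mF$ depends only on $\la(\mF)$. Applying the reverse direction to $\mF$ produces a decomposing surface $S$ with $(M,\ga)\leadsto^S(M',\ga')$ a connected product, and applying the forward direction to this $S$ produces a foliation $\mF'$ whose fibration of $M_0$ agrees with that of $\mF$, hence $\la(\mF')=\la(\mF)$ and $\mF\sim\mF'$. The main technical obstacle I anticipate is verifying that the un-spun surface $S$ in the reverse direction is a legal decomposing surface in the sense of Definition \ref{def decomp}, in particular that every component of $S\cap\ga$ satisfies one of conditions (i)--(iii); this forces a careful choice of junctures and a delicate bookkeeping inside the spiral staircase neighbourhoods, but the theory reviewed in Subsection \ref{subsection junctures} supplies precisely the tools needed to reconcile the periodicity of $f$ at the ends of $L$ with the local product structure across $\ga$.
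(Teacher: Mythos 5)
Your overall architecture mirrors the paper's (Gabai's construction for the forward direction, truncation at spiral staircase neighbourhoods for the reverse direction, Theorem \ref{thm equivalence} for the equivalence clause), but there is a genuine gap in the forward direction.

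You dismiss the well-groomed hypothesis by asserting that ``the ditch construction is local \ldots{} so the coherence-of-orientations condition is unnecessary.'' This is too quick. The local ditch-filling can certainly be \emph{carried out} without coherent orientations, but it is not automatic that the \emph{output} is a taut depth one foliation whose only compact leaves are the components of $R(\ga)$. When junctures in $\bdd S \cap V$ are not coherently oriented, the spinning process can a priori produce (a) compact leaves in the interior of $M$, if some subset of junctures satisfies $[J_{i_1}]+\cdots+[J_{i_n}]=0$ and the corresponding spirals close up, or (b) noncompact leaves of depth greater than one, if two noncompact leaves become asymptotic to one another inside the spiral staircase. The paper's Lemma \ref{lemma no conditions} is devoted precisely to ruling these out: it rules out (a) by observing that a compact interior leaf would force $\abs{R(\ga')} > 2$, contradicting that $(M',\ga')$ is a connected product; and it rules out (b) by parametrising points in the spiral staircase as $y\times(t+m)$ with $m\in\bZ^{\geq0}$ and using the shift identification \eqref{shift one} to show that distinct leaves $L_s$ and $L_t$ stay a bounded distance apart along any transverse arc. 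Neither of these is a consequence of locality alone; the first explicitly uses the global hypothesis that the decomposed manifold is a connected product.

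Your reverse direction and your treatment of the final clause are essentially the paper's argument (truncate a leaf at spiral staircase neighbourhoods, check that $\ga'$ is forced by the transversality of the fibres to $\ga$, and invoke Theorem \ref{thm equivalence} together with the identity $\la(\mF)=PD\circ[S]$), though you would still need to carry out the bookkeeping you flag at the end --- the paper does this by choosing $S:=K\setminus B$ where $K$ is the union of truncated leaves bounding the complement of the spiral staircases and $B$ is the tangential part of its boundary, and then verifying by an explicit orientation check that the induced annuli are exactly $\bdd L'\times I$.
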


\begin{lemma} \label{lemma well-groomed}
Let $(M,\ga)$ be a connected sutured manifold such that $R(\ga) \neq \emptyset$ and suppose $(M,\ga) \rightsquigarrow^{S} (M',\ga')$ is a {\rm well-groomed} surface decomposition giving a connected product sutured manifold $(M',\ga')$.  Then there is a depth one foliation on $(M,\ga)$.
\end{lemma}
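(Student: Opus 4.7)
The plan is to apply Gabai's construction from Subsection \ref{subsection Gabai's construction} directly: the hypothesis gives a one-step well-groomed hierarchy terminating in a product, which is exactly the setting in which that construction is spelled out in complete detail. I would start from the product foliation $\mP$ on $(M',\ga')$, whose leaves are $\Sigma \times \{t\}$. Because $R(\ga) \neq \emptyset$, the dichotomy in Case 1 of Subsection \ref{subsection Gabai's construction} rules out $\bdd S \cap V = \emptyset$ for any component $V$ of $R(\ga)$: since $S$ is well-groomed, each $\bdd S \cap V$ is then a nonempty disjoint union of parallel, coherently oriented, nonseparating simple closed curves (Case 3) or arcs (Case 2), so every component of $R(\ga)$ falls into one of the two cases treated there.

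Next, for each component $V$ of $R(\ga)$ I would perform the spinning construction in a collar neighbourhood of $V$; these collars can be chosen pairwise disjoint, so the operations at different components do not interact. For each juncture component $J_i \subset \bdd S \cap V$, glue in a product ``ditch'' carrying its tangent product foliation, using the identifications \eqref{shift one} and the companion identity identification, exactly as in Case 2. The key point is that \eqref{shift one} enforces the no-holonomy condition, so any two interior leaves of $\mP$ are spun onto $V$ while keeping their transverse distance constant. This is what guarantees that the resulting noncompact leaves are at depth exactly one, accumulating only on the components of $R(\ga)$. By the remark following Theorem \ref{thm Gabai construction}, the foliation obtained this way is moreover smooth along all of $R(\ga)$, including any toroidal components, precisely because the input foliation on $(M',\ga')$ is the product foliation.

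The main step to verify carefully is the bookkeeping at a single component $V$ when several parallel junctures $J_1,\ldots,J_l$ accumulate there: one must check that the $l$ ditches, glued in the order dictated by the coherent orientation, stack into an honest spiral staircase neighbourhood of $V$ in the sense of Subsection \ref{subsection junctures}, rather than producing conflicting identifications or extra holonomy. This is precisely the reason the well-groomed hypothesis (parallelism and coherent orientation) is imposed, and it is exactly how the construction in Case 2 is arranged: each successive ditch is attached along the next parallel juncture, yielding the desired $l$-step staircase. With these checks in hand the resulting $\mF$ is, by construction, a depth one foliation of $(M,\ga)$ with compact leaves $R(\ga)$, transverse to $\ga$ and tangent to $R(\ga)$, which is what was to be shown.
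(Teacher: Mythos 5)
Your proposal is correct and follows exactly the same route as the paper: the paper's proof of this lemma is a one-line citation to Gabai's Theorem 5.1 together with the detailed exposition in Subsection \ref{subsection Gabai's construction}, and what you have written is precisely a careful unpacking of that construction in the one-step, well-groomed case (ruling out empty intersections via Case 1, spinning along arcs and closed curves via Cases 2 and 3, and checking the no-holonomy identification and the multi-juncture staircase).
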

\begin{proof}
This lemma is just a particular case of \cite[Thm.\,5.1] {Gabai}; see subsection \ref{subsection Gabai's construction}.

\end{proof}

We now remove the well-groomed condition, thereby proving the 'only if' direction of the Lemma \nolinebreak C.

\begin{lemma} \label{lemma no conditions}
Let $(M,\ga)$ be a connected sutured manifold such that $R(\ga) \neq \emptyset$ and suppose $(M,\ga) \rightsquigarrow^{S} (M',\ga')$ is a surface decomposition giving a connected product sutured manifold $(M',\ga')$.  Then there is a depth one foliation on $(M,\ga)$.
\end{lemma}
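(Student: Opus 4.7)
The plan is to revisit Gabai's spinning construction from Subsection \ref{subsection Gabai's construction} and observe that the well-groomed hypothesis is used only to arrange a clean global ``staircase'' picture; the no-holonomy shift \eqref{shift one} that forces the resulting foliation to have depth exactly one is a purely local prescription. So the approach will be to run the single-juncture version of the construction locally, one connected component of $\bdd S \cap V$ at a time, with no reference to parallelism or coherent orientation of the components of $\bdd S \cap V$.

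To begin, I would equip the connected product $(M',\ga') \cong (\Si \times I, \bdd \Si \times I)$ with its product foliation $\mP$ (leaves $\Si \times \{t\}$) and then reverse the decomposition by gluing $S_+$ to $S_-$; this carries $\mP$ to a foliation $\mF'$ of $M \setminus \bdd S$ that is transverse to $\ga$ and tangent to $R(\ga) \setminus \bdd S$. As in Case 1 of Subsection \ref{subsection Gabai's construction}, the connectedness of $(M',\ga')$ together with the product hypothesis forces $\bdd S \cap V$ to be nonempty for every component $V$ of $R(\ga)$: otherwise $V$ would be all of $R_\pm(\ga')$ and $S$ would be empty. For each such $V$, I would split $\bdd S \cap V$ into its connected components $J_1,\ldots,J_l$ (arcs or simple closed curves), pick pairwise disjoint tubular neighborhoods $N(J_i) \subset V$, and extend them to disjoint collar neighborhoods of $V$ in $M$. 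Inside each collar I would apply the $l=1$ case of Case 2 or Case 3 of Subsection \ref{subsection Gabai's construction} verbatim: create a ditch along $J_i$, fill it with a product slab $J_i \times [-2,-1] \times [0,\infty]$, and glue the walls using the no-holonomy shift \eqref{shift one}. Since the pieces are supported on disjoint open sets, no global alignment between the $J_i$'s is required, and each local modification is independent of the others.

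Having assembled these local modifications into a $C^0$ foliation $\mF$ of $(M,\ga)$ that is tangent to $R(\ga)$ and transverse to $\ga$, what remains is the depth one assertion. The hard part will be excluding the possibility that a single noncompact leaf, after passing through several distinct local models, acquires an additional limit leaf beyond the components of $R(\ga)$ it spirals into. I would address this through Lemma \ref{lemma fibring}: the vertical direction in each spiral staircase piece glues to the $I$-direction of $M' \cong \Si \times I$ to yield a $1$-dimensional foliation $\mL$ of $M$ transverse to $\mF$ and tangent to $\ga$, and the no-holonomy identification \eqref{shift one} is precisely what guarantees that a suitable closed curve in $M_0$ transverse to $\mL$ meets each leaf of $\mF|M_0$ exactly once. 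Lemma \ref{lemma fibring} then promotes this to a fibration $M_0 \to S^1$ and identifies $\mF$ as a taut depth one foliation of $(M,\ga)$ whose only compact leaves are the components of $R(\ga)$, completing the argument.
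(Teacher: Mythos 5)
Your setup is the same as the paper's: run Gabai's ditch-and-slab spinning independently at each connected component of $\bdd S \cap R(\ga)$, observing that the no-holonomy identification \eqref{shift one} is purely local and never used the parallelism or coherent orientation of the junctures. You also correctly observe, via the Case 1 argument, that the product hypothesis forces $\bdd S \cap V \neq \emptyset$ for every component $V$ of $R(\ga)$.

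The problem is the final step, where you appeal to Lemma \ref{lemma fibring} to conclude depth one. That lemma takes as a \emph{hypothesis} that $\mF$ already has the components of $R(\ga)$ as its sole compact leaves, and then asserts the \emph{equivalence} of ``$\mF$ is a taut depth one foliation'' with ``there is a transverse circle (or closed leaf of $\mL$) in $M_0$ meeting each leaf of $\mF|M_0$ exactly once'' and with ``$\mF|M_0$ fibres over $S^1$.'' You have verified neither the hypothesis (that the spinning produces no compact leaves in $\Int M$) nor any of the equivalent conditions; you simply assert that \eqref{shift one} ``precisely guarantees'' the transverse closed curve. Since producing such a curve is itself one of the equivalent characterizations of depth one, the argument as written is circular: you are using the conclusion to justify invoking the lemma that is supposed to deliver the conclusion.

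The paper's proof fills exactly this gap with a direct verification, in two steps: (a) no interior compact leaf can arise, because such a leaf would force a nontrivial null-homologous sum $[J_{i_1}] + \cdots + [J_{i_n}]=0$ of junctures, which would mean decomposing along $S$ yields $\abs{R(\ga')} > 2$, contradicting the product hypothesis; and (b) no two noncompact leaves are asymptotic, shown by labelling each noncompact leaf $L_s$ by the level $s \in (0,1)$ of the product foliation it extends and observing that, because \eqref{shift one} shifts by exactly $1$, points of $L_s$ in the staircase pieces have transverse coordinate of the form $s+n$ with $n \in \bZ^{\geq 0}$, so for $s \neq t$ the quantity $\min\{\abs{s+m-t-n}: m,n\in\bZ\}$ is strictly positive and the clustering needed for asymptoticity cannot occur. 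To complete your argument you would need to supply analogues of both (a) and (b) before Lemma \ref{lemma fibring} can be legitimately invoked.
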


\begin{proof}
We assume that the reader is familiar with the construction and the notation in Case 2 of subsection \ref{subsection Gabai's construction}. As before let $V$ be a component of $R_+(\ga)$.   Let $\mJ:=\{J_1, \ldots, J_k\}$ denote the connected components of $\bdd S \cap V$; the collection $\mJ$ are the junctures of $V$.  

We can apply the construction from  Case 2 of Subsection \ref{subsection Gabai's construction} at every component of $R(\ga)$ even though $S$ is not necessarily well-groomed.  This gives a foliation $\mF$ of $(M,\ga)$. To prove the lemma we need to show three things: that every leaf in the interior of $M$ is noncompact, that every such leaf is of depth one (and hence totally proper), and that $\mF$ is taut.  

Firstly, if the spinning procedure were to yield a compact leaf in the interior that would mean that $[J_{i_1}] + \cdots + [J_{i_n}] = 0$ for some subset $J_{i_1}, \ldots, J_{i_n}$ of junctures in $\mJ$.  But this is impossible as cutting along $S$ would result in a manifold $(M',\ga')$ with $\abs{ R(\ga')} >2$, which would mean that $(M',\ga')$ is not a product.

Secondly,  let $L_1$ and $L_2$ be two arbitrary noncompact (and not necessarily distinct) leaves of $\mF$.  If we can show that $L_1$ is not asymptotic to $L_2$, then each noncompact leaf is totally proper and has depth one.  To show this we use the following observation.  If $L_1$ is asymptotic to $L_2$, then for every point $x \in L_2$ and for each arc $a$ transverse to $\mF$ and passing through $x$ the set $a \cap L_1$ clusters at $x'$.

Let us label the noncompact leaves of $\mF$ according to the level set of the product foliation $\mP$ of $(M',\ga')=(\Si  \times [0,1],\bdd \Si \times [0,1])$ that gives rise to each leaf.  So for some $s,t \in(0,1)$, we have two (not necessarily distinct) leaves  $L_s'=\Si \times s$ and $L_t'=\Si \times t$.  Let $L_s$ and $L_t$ be the leaves of $\mF$ such that $L_s \supset L_s' $ and $L_t \supset L_t'$.

Let $x$ be a point in $L_t$ and let $a$ be an arc in $M$ that is transverse to $\mF$.  We need to show that $a \cap L_s$ does not cluster around $x$.  Suppose that $x$ is in $L'_t$. Then $x$ is unaffected by the spinning of $\mF$ near $R(\ga)$.  So we can choose a sufficiently small open neighbourhood $U_x$ of $x$ in $M$ such that $L_s \cap U_x = \emptyset$ if $s \neq t$, and $L_s \cap U_x = \{ x \}$ if $s=t$. Therefore, $a \cap L_s$ does not cluster around $x$.  

 Now suppose that $x \not \in L'_t$.  Then either $x \in D_i$ or $x \in K$.  Recall that $D_i = J_i \times [0,1] \times [0,\infty]$ and $K =(R(\ga) \setminus N) \times [0,\infty]$, where $N$ was a neighbourhood of $J_i$ as described in the construction of $\mF$.  In either case, $x$ is of the form $y \times (t+m)$, for some point $y$ in $J_i \times [0,1]$ or in $R(\ga) \setminus N$ and some nonnegative integer $m$.  Moreover, because of the choice of parametrisation and gluing given in \eqref{shift one}, any point of $L_s \setminus L_s'$ is of the form $z \times (s+n)$, for some nonnegative integer $n$.  If $s \neq t$, then
 \[
 \min \{ \abs{s+m-t-n} : m,n \in \bZ \}>0.
 \] 
If $s =t$, then by definition of transversality the arc $a$ can only intersect $L_t$ at points of the form $y \times (t+m)$, where the second coordinate points are discrete.  In both cases there is again a small neighbourhood $U_x$ such that $L_s \cap U_x = \emptyset$, and $L_s \cap U_x=\{x\}$, respectively.  So two noncompact leaves of $\mF$, $L_1$ and $L_2$ are not asymptotic.
 
Lastly, $\mF$ is taut for the same reason as when $S$ was well-groomed.
\end{proof}

Before we proceed to prove the `if' direction of Lemma C, in Lemma \ref{lemma Gabai's truncation}, consider the following definition.

\begin{deff} \label{deff truncating}
Let $\mF$ be a depth one foliation of $(M,\ga)$.  Suppose $L$ is a noncompact leaf of $\mF$.  Then a surface $\Si$ is obtained by {\it truncating} $L$, if $\Si =L \setminus \mN$, for some spiral staircase neighbourhood $\mN$ of $R(\ga)$.
\end{deff}

We can now summarise the content of Lemma \ref{lemma Gabai's truncation}: from a depth one foliation $\mF$ of $(M,\ga)$, one can obtain a surface $S$ giving a product decomposition by truncating an arbitrary noncompact leaf $L$ of $\mF$.  Morally, we take a leaf and remove its ends and what we are left with is the surface.  The details follow.

\begin{lemma} \label{lemma Gabai's truncation}
Suppose that $\mF$ is a depth one foliation on $(M,\ga)$.  Then there exists a surface decomposition  $(M,\ga) \rightsquigarrow^S (M',\ga')$ giving a product manifold.
\end{lemma}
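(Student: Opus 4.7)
The plan is to invert the spinning construction of Subsection \ref{subsection Gabai's construction}: given the depth one foliation $\mF$ on $(M,\ga)$, I will build a properly embedded surface $S$ by truncating a noncompact leaf and capping the resulting interior junctures with carefully chosen transverse walls, and then verify that the decomposition $(M,\ga) \rightsquigarrow^S (M',\ga')$ yields a connected product manifold. Since $\mF$ is depth one, by the conventions of Subsection \ref{subsection foliations} we have $R(\ga) \neq \emptyset$; apply Lemma \ref{lemma fibring} to fix a smooth 1-dimensional foliation $\mL_f$ transverse to $\mF$ and tangent to $\ga$, together with the $C^0$ flow $\Phi_t$ along $\mL_f$ that permutes leaves of $\mF$ and is stationary on $R(\ga)$.

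Fix a noncompact leaf $L$ of $\mF$. By Lemma \ref{lemma cyclic} and endperiodicity of the first return map of $\mL_f|L$, every end $e$ of $L$ is periodic and asymptotic to a unique component $V_e \subset R(\ga)$. For each $e$ choose a spiral staircase neighbourhood $\mN_e = \mN_{V_e}^{\mU_e,i_e}$ as in Subsection \ref{subsection junctures}, with $i_e$ large enough that the $\mN_e$ are pairwise disjoint and each $\mN_e$ fully contains the spiral tail of $L$ at $e$. The truncation $\Si := L \setminus \bigcup_e \mN_e$ of Definition \ref{deff truncating} is then compact, with interior boundary along the junctures $J^e_{i_e}$ and exterior boundary retained from $\bdd L \subset \ga$. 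To promote $\Si$ to a properly embedded surface, glue along each $J^e_{i_e}$ a transverse wall obtained by following $\mL_f$-flowlines from $J^e_{i_e}$ within $\mN_e$ to the boundary of $M$; this produces a compact surface $\tilde T_{i_e}$ whose remote boundary lies on $V_e \cup (\ga \cap \mN_e)$ and which corresponds exactly to the ditch wall $W_0$ of the local model in Case 2 of Subsection \ref{subsection Gabai's construction}. Define $S := \Si \cup \bigcup_e \tilde T_{i_e}$ and smooth corners. Then $S$ is a smooth properly embedded surface with $\bdd S \subset \ga \cup R(\ga)$; the conditions of Definition \ref{def decomp} on $S \cap \ga$ can be checked from the structure of $\mF|\ga$, which is a fibration on toroidal components and a spun depth-one foliation on annular ones.

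Decompose $(M,\ga) \rightsquigarrow^S (M',\ga')$. On the exterior piece $M \setminus \bigcup_e \mN_e$, the foliation $\mF$ is an honest fibre bundle with fibre $\Si$ and is trivialised by $\Phi_t$, so cutting along $\Si$ there gives the product $\Si \times (0,1)$. Inside each $\mN_e$, cutting along the transverse wall $\tilde T_{i_e}$ undoes the shift identification \eqref{shift one} that created the spiral, collapsing the infinite spiral staircase back into a single product slab $\hat V_e \times [0,1]$, where $\hat V_e$ is $V_e$ cut along the juncture $J^e_{V_e}$. These two product pieces glue along their common $\tilde T_{i_e}$-boundary into a single product sutured manifold $(M',\ga') \cong \hat \Si \times I$, where $\hat \Si$ is obtained from $\Si$ by capping each juncture $J^e_{i_e}$ with the subsurface of $V_e$ cobounded by $J^e_{V_e}$. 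Connectedness of $(M',\ga')$ follows from that of $L$ and $(M,\ga)$.

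The main technical obstacle lies in the third paragraph, namely the claim that cutting along $\tilde T_{i_e}$ truly unrolls the spiral staircase into a single product slab. The flow $\Phi_t$ alone is insufficient here, because it degenerates to a merely $C^0$ flow at $R(\ga)$; the argument has to be made with reference to the explicit staircase model of Subsection \ref{subsection Gabai's construction}, where a transverse wall meeting each step of the staircase exactly once severs the shift identification and opens the spiral into a finite product. A secondary, routine, obstacle is the case-check of Definition \ref{def decomp} for $S \cap \ga$, which requires examining each of the three types of suture components.
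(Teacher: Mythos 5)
Your overall plan — truncate a noncompact leaf using spiral staircase neighbourhoods and turn the result into a decomposing surface — is the same idea that drives the paper's proof, but the specific decomposing surface you build and the manifold you build it in are genuinely different, and the differences create real gaps. The paper removes a single spiral staircase neighbourhood $\mN = \bigcup_{V\subset R(\ga)} \mN_V$, observes that $\overline M := M \setminus \mN$ is sutured-homeomorphic to $(M,\ga)$, and then takes $S$ to be the truncated leaves $K = \bigcup \overline{L_i}$ with the tangential boundary pieces $B \subset \bdd\overline M$ removed. Everything happens inside $\overline M$, so all the surfaces in play are compact, all the flows are smooth, and ``cutting gives a product'' reduces to the immediate observation that $\mF|\overline M$ has no compact leaves. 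You instead stay in $M$ and cap $\Si$ with transverse walls $\tilde T_{i_e}$ running all the way to $R(\ga)$.

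There are two concrete gaps in your construction. First, you choose one staircase neighbourhood $\mN_e = \mN_{V_e}^{\mU_e, i_e}$ per end $e$ of $L$ and require the $\mN_e$ to be pairwise disjoint. This is impossible whenever two ends $e_1, e_2$ of $L$ are asymptotic to the same component $V$ of $R(\ga)$ (a generic situation, e.g.\ whenever the leaf has positive genus), since each $\mN_{V_{e_j}}$ contains $V$ by definition and hence they intersect. The paper avoids this by choosing exactly one spiral staircase $\mN_V$ per component $V$ of $R(\ga)$: the tails of the other ends asymptotic to $V$ also lie (eventually) in $\mN_V$ and simply get truncated by the single transverse wall $T_i$ on $\bdd\mN_V$, contributing boundary arcs on $T_i$ rather than additional fundamental domains. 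Second, you build each $\tilde T_{i_e}$ by flowing the juncture $J^e_{i_e}$ all the way down to $V_e$; you flag the $C^0$-degeneracy of the flow at $R(\ga)$ as the main technical obstacle in verifying the ``unrolling'' claim, but the issue already arises one step earlier, in showing that $\tilde T_{i_e}$ is a well-defined, properly embedded surface with the boundary behaviour you claim — this limit surface is precisely what the flow cannot produce smoothly. The paper never constructs this object: it only ever uses the compact transverse wall $T_i \cong J_i \times [0,1]$ joining one fundamental domain to the next, which is part of $\bdd\mN_V$ and lives entirely in $M_0$ where the flow is smooth. Both gaps are closed by the single idea of working in $\overline M \cong M$ rather than $M$, which is the real content the paper's proof adds to your outline.
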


\begin{proof}
Lemma \ref{lemma fibring} says that a depth one foliation $\mF$ determines a fibration $M_0 \to S^1$.  For each component $V$ of $R(\ga)$, let $\mN_V$ be a spiral staircase neighbourhood of $V$ associated to $\mF$ and to some transverse foliation $\mL$ (see Subsection \ref{subsection junctures} for details and notation).  Recall that the construction of $\mN_V$ starts by choosing a leaf $L_0$ and an end $e$ of $L_0$ converging to $V$, followed by a choice of a fundamental neighbourhood system $\mU$ of $e$, which yields a set of junctures $J_1, J_2, \ldots J_k$.  The boundary of each $\mN_V$ is $V \cup B_i \cup T_i$, for some $i \in \bN$, where $B_i \subset L_0$, and $T_i$ is obtained by flowing $J_i$ along $\mL$.   

 Set $\mN:=\cup_{V \subset R(\ga)} \mN_V$.  Then $\overline M:=M \setminus \mN$ is clearly homeomorphic to $M$ and  determines a sutured manifold $(\overline M,\overline \ga)$ with the same sutures as $M$.  Consider the effect of removing $\mN$ on the foliation $\mF$.  Let  $L$ and $L'$ be two leaves of $\mF$ transverse to $\mN$.  This implies that $\overline L:=\overline M \cap L$ is homeomorphic to $\overline {L'}:= \overline M \cap L'$ via a map defined using $\mL$.
 
 The boundary $\bdd \overline M$ is the union of two subsets $T$ and $B$, where $T$ is the subset of $\bdd \mN$ such that the leaves of $\mF| \overline M$ are transverse to $T$, and $B:= \bdd \overline M \setminus \Int T$.  Note that $B \subset \cup_{i=1}^k \overline {L_i}$, where $L_i$ are some leaves of $\mF$ and $\overline{L_i}:=L_i \cap \overline M$. Set $K:=\cup_{i=1}^k \overline {L_i}$ . As $\overline L$ is homeomorphic to  $\overline {L'}$, the manifold $M\setminus K$ is a fibration over the unit interval with fibre $\overline L$.
 
Set $S:=K \setminus B$.  Then $S$ is a decomposing surface of $(\overline M, \overline \ga)$, so we have a decomposition $(\overline M, \overline \ga) \leadsto^S (M',\ga')$ and we know that $M'  \cong \overline L \times I$.  It remains to show that $\ga' = \bdd  \overline L \times I$.  Choose the orientation of $S$ to be opposite from the orientation of $K$.  By the definition of a surface decomposition
\[
\ga':=(\ga \cap M') \cup (S_+ \cap R_-(\overline \ga)) \cup (S_- \cup R_+(\overline \ga)).
\]
Further, by the definition of $\mF$ all of the fibres are transverse to $\ga$.  A careful consideration of orientations now shows that $(S_+ \cap R_-(\overline \ga)) \cup (S_- \cup R_+(\overline \ga))=B$, possibly after a small isotopy of the boundary.  Thus, $\ga'$ is precisely the subset of $\bdd M'$ to which the fibres of the fibration are transverse; in other words, $\ga'=\bdd L' \times I$.

It follows that $(M,\ga) \leadsto^S (M',\ga')$ yields a product sutured manifold $(M',\ga')$.  
\end{proof}

\begin{rmk} \label{rmk leaf homology}
Note that if $\al$ is a loop in $M$, then $\langle \la(\mF), [\al] \rangle$ is the signed intersection number of $\al$ with a noncompact leaf $L$.  Truncating $L$ by a sufficiently small spiral staircase neighbourhood in Lemma \ref{lemma Gabai's truncation}, we have that $\langle PD \circ [S],[\al] \rangle=\langle \la(\mF),[\al]\rangle$ for any loop $\al$.  Hence,  $\la(\mF)= PD \circ[S]$ where $S$ is the decomposing surface obtained by truncating $L$.  
\end{rmk}

The following is a corollary of Remark \ref{rmk leaf homology} and of Theorem \ref{thm equivalence}.

\begin{cor} \label{cor equivalent foliations}
Let $\mF$ and $\mF'$ be two depth one foliations of $(M,\ga)$, together with the decomposing surfaces from Lemma \ref{lemma Gabai's truncation}, $S$ and $S'$, respectively.  Then $\mF$ is equivalent to $\mF'$ if and only if $[S]=[S'] \in H_2(M,\bdd M)$.
\end{cor}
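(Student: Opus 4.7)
The proof is essentially a direct chaining of the two results cited just before the corollary, so the plan is short. I would first apply Theorem \ref{thm equivalence}, which tells us that $\mF \sim \mF'$ if and only if $\la(\mF)=\la(\mF') \in H^1(M)$. This reduces the problem to comparing the cohomology classes $\la(\mF)$ and $\la(\mF')$ rather than the foliations themselves.

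Next, I would invoke Remark \ref{rmk leaf homology}, which identifies these foliated classes with the Poincar\'e--Lefschetz duals of the truncated leaves: $\la(\mF) = PD\circ [S]$ and $\la(\mF') = PD\circ [S']$. Since $PD \colon H_2(M,\bdd M) \to H^1(M)$ is an isomorphism, the equality $\la(\mF)=\la(\mF')$ is equivalent to $[S]=[S'] \in H_2(M,\bdd M)$, and the corollary follows by combining the two equivalences.

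The only point that requires a moment of care is verifying that the identity $\la(\mF)=PD\circ[S]$ from Remark \ref{rmk leaf homology} applies to \emph{any} surface $S$ coming from Lemma \ref{lemma Gabai's truncation}, not just a particular truncation of a particular leaf. But the remark's argument is that for any loop $\al \subset M$, both $\langle \la(\mF),[\al]\rangle$ and $\langle PD \circ [S],[\al]\rangle$ compute the signed intersection number of $\al$ with a truncated noncompact leaf; this is independent of the choices of noncompact leaf $L$ and of the sufficiently small spiral staircase neighbourhood used for the truncation, since two noncompact leaves represent the same class in $H_2(M,\bdd M)$ via the flow along $\mL$. Thus the formula $\la(\mF)=PD\circ[S]$ holds for any $S$ produced by Lemma \ref{lemma Gabai's truncation}, and the argument above goes through with no real obstacle.
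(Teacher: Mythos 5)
Your proof is correct and follows exactly the route the paper intends: the paper states the corollary "is a corollary of Remark \ref{rmk leaf homology} and of Theorem \ref{thm equivalence}" without further argument, and your chaining of those two results (plus the observation that $PD$ is an isomorphism) is precisely that argument. Your extra care in checking that $\la(\mF)=PD\circ[S]$ is independent of the truncation choices is a sensible sanity check, consistent with what the remark asserts.
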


\begin{proof} [Proof of Lemma C]
When $R(\ga)= \emptyset$ and $(M',\ga')$ is a product, then $\mF$ is a fibration.  Conversely, when $\mF$ is a fibration, cutting along a fibre gives a product.  For the depth one case Lemmas \ref{lemma no conditions} and \ref{lemma Gabai's truncation} each prove one direction of the theorem.   
\end{proof}

\begin{rmk} \label{rmk restricting to balanced}
Recall that in Remark \ref{rmk depth zero and foliations} we distinguished two cases when $M$ fibres over $S^1$.  The first case is when $R(\ga) =\emptyset$ and the fibration is a depth zero foliation (think of $S^1 \times D^2$ fibered with $T(\ga)=S^1 \times \bdd D^2$).  The second case is when $A(\ga) \neq \emptyset$, and the fibres are not transverse to $\ga$ or not tangential to $R(\ga)$ (think $S^1 \times D^2$ with two parallel sutures).  In both cases cutting along a fibre can result in a connected product sutured manifold.  From the proof of Lemma C, it is now evident that in the first case our construction from the surface decomposition recovers the depth zero foliation (i.e. the fibration), but that in the second case we construct a depth one foliation. 

Note that in Lemma C, we do not restrict to balanced sutured manifolds, therefore $R(\ga) = \emptyset$ can happen.  However, in order to work with the sutured Floer polytope in the following subsection, we must restrict to strongly balanced sutured manifolds, so only the latter case of fibring can occur.
\end{rmk}

\subsection{The duality}

Let $(M,\ga)$ be a strongly balanced sutured manifold.  Recall that given a decomposing surface $S$, there exists a set of outer $\Spin^c$ structures denoted by $O_S$; see Definition \ref{def outer}.

\begin{deff} \label{deff extremal}
Let $(M,\ga)$ be a taut, strongly balanced sutured manifold.  
\begin{enumerate}
\item A $\Spin^c$ structure $\fs$ is called {\it extremal} if there exists a surface $S$ such that  $\{\fs\} =O_S \cap S(M,\ga)$.  Then $\fs$ is {\it extremal with respect to $\al:=[S]$}, and this is equivalent to saying that $\al(\fs) > \al(\ft)$ for any other $\ft \in S(M,\ga)$ (see Theorems \ref{thm nice} and \ref{thm decomposition}).
\item  The polytope $P(M,\ga)$ is said to have an {\it extremal $\bZ$ at $\fs$}, if $\fs$ is extremal  and \linebreak $SFH(M,\ga,\fs)=\bZ$.  
\end{enumerate}
\end{deff}

The following lemma is a direct consequence of Juh\'asz's work; it is useful for us to write it in the terminology from Definition \ref{deff extremal}.

\begin{lemma} \label{lemma decomposition}
Let $(M,\ga)$ be a taut, strongly balanced sutured manifold with $H_2(M)=0$.  Then $\fs$ is an extremal $\bZ$ with respect to a homology class $\al$ if and only if there exists a surface decomposition $(M,\ga) \leadsto^S (M',\ga')$ such that $(M',\ga')$ is a product and $\al=[S]$.
\end{lemma}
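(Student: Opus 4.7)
The plan is to exploit the behaviour of sutured Floer homology under surface decompositions, as captured by Theorems \ref{thm nice} and \ref{thm decomposition}. For the forward implication, suppose $\fs$ is an extremal $\bZ$ with respect to $\al$. Unpacking Definition \ref{deff extremal}, this says $SFH(M,\ga,\fs) = \bZ$ and $\al(\fs) > \al(\ft)$ for every other $\ft \in S(M,\ga)$. Since the definition in \eqref{equation SFH alpha} of $SFH_\al(M,\ga)$ picks out only those $\Spin^c$ structures whose image under $c_1(\cdot,t)$ lies on the extremal face $C_\al(M,\ga,t)$, the uniqueness of $\fs$ among these vertices forces $SFH_\al(M,\ga) = SFH(M,\ga,\fs) = \bZ$. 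Theorem \ref{thm decomposition}(i), applied to the class $\al$, then produces a groomed decomposition $(M,\ga) \leadsto^S (M',\ga')$ with $[S] = \al$, $(M',\ga')$ taut, and $SFH(M',\ga') \cong SFH_\al(M,\ga) = \bZ$. Since tautness forces irreducibility, Theorem \ref{thm product} delivers $(M',\ga')$ as a connected product sutured manifold, finishing this direction.

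For the reverse direction I begin with a decomposition $(M,\ga) \leadsto^S (M',\ga')$ where $(M',\ga')$ is a product and $[S] = \al$, so Theorem \ref{thm product} gives $SFH(M',\ga') = \bZ$. If $S$ can be arranged to be nice in the sense of Definition \ref{def nice}, Theorem \ref{thm nice} yields
\begin{equation*}
\bZ \cong SFH(M',\ga') = \bigoplus_{\fs \in O_S} SFH(M,\ga,\fs),
\end{equation*}
so that $O_S \cap S(M,\ga) = \{\fs\}$ for a unique $\Spin^c$ structure $\fs$ with $SFH(M,\ga,\fs) = \bZ$. Definition \ref{deff extremal}(1) then identifies $\fs$ as extremal with respect to $[S] = \al$, hence as an extremal $\bZ$ with respect to $\al$.

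The main obstacle is therefore upgrading the given $S$ to a nice representative of the same class $\al$ that still gives a product decomposition. The remark after Definition \ref{def nice} guarantees that any \emph{open and groomed} surface can be made nice by a small boundary perturbation, but a product-decomposition surface need not a priori satisfy these conditions. To overcome this, I plan to route the argument through the foliation side of the picture. By Lemma C, the product decomposition along $S$ equips $(M,\ga)$ with a depth zero or depth one foliation $\mF$, and by Remark \ref{rmk leaf homology} the foliated class satisfies $\la(\mF) = PD \circ [S]$. Invoking the construction of Lemma \ref{lemma Gabai's truncation} on a generic noncompact leaf of $\mF$, with a sufficiently small spiral staircase neighbourhood of $R(\ga)$, yields a new decomposing surface $\tilde S$ that is open, still gives a connected product decomposition, and represents the same class $\al$. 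With a generic choice of truncation, $\tilde S$ meets each component of $R(\ga)$ in parallel coherently oriented junctures, so $\tilde S$ is groomed and, after the generic perturbation, nice. Feeding $\tilde S$ in place of $S$ into the previous paragraph's argument then completes the proof.
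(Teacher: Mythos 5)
Your forward direction matches the paper's proof exactly: extract $SFH_\al(M,\ga)=\bZ$ from the uniqueness of the extremal vertex, apply Theorem~\ref{thm decomposition}(i) to produce a taut decomposition with $SFH(M',\ga')\cong\bZ$, and invoke Theorem~\ref{thm product} to conclude $(M',\ga')$ is a product. No issues there.

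The reverse direction is where you diverge. The paper simply writes $\bigoplus_{\fs\in O_S}SFH(M,\ga,\fs)=SFH_\al(M,\ga)=SFH(M',\ga')=\bZ$ and cites Theorem~\ref{thm nice} and Theorem~\ref{thm product}, tacitly applying Theorem~\ref{thm nice} (and the auxiliary result $SFH(M',\ga')=SFH_\al(M,\ga)$ for nice $S$) to the given surface $S$. You are right to notice that both of these ingredients require $S$ to be nice in the sense of Definition~\ref{def nice}, and that the hypothesis only hands you an arbitrary product-giving decomposing surface. The paper does not explicitly address this; it effectively assumes $S$ can be taken nice.

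However, your proposed repair via Lemma~C and Lemma~\ref{lemma Gabai's truncation} does not actually close the gap --- it relocates it. The load-bearing step is the sentence ``With a generic choice of truncation, $\tilde S$ meets each component of $R(\ga)$ in parallel coherently oriented junctures, so $\tilde S$ is groomed and, after the generic perturbation, nice.'' This is asserted, not argued, and it is exactly the kind of claim that would need a careful proof: the junctures of a depth one foliation near a compact leaf come from the endperiodic structure of a noncompact leaf, and there is no a priori reason the resulting 1-manifold in $R(\ga)$ consists of parallel, coherently oriented curves, nor that the resulting surface satisfies condition (ii) of Definition~\ref{def groomed} when the junctures are arcs. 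Showing that the truncated leaf is groomed is essentially the same difficulty as showing the original $S$ could have been perturbed to a nice surface in the first place. A secondary, smaller point: Remark~\ref{rmk leaf homology} gives $\la(\mF)=PD\circ[\tilde S]$ for the \emph{truncated} surface, so identifying $[\tilde S]$ with $[S]=\al$ needs a short additional argument (that spinning does not change the relative homology class), which you leave implicit. So the detour is more elaborate than the paper's argument without actually delivering the missing justification; if you want to make this rigorous, the grooming claim for the truncated leaf is the statement you must prove.
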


\begin{proof}
Let $\fs$ carry a $\bZ$ extremal to $\al$.  Then by Theorem \ref{thm decomposition}, there exists a surface decomposition $(M,\ga) \leadsto^S (M',\ga')$ such that $(M',\ga')$ is taut, $\al=[S]$ and $SFH(M',\ga')=SFH(M,\ga,\fs)=\bZ$.  Theorem \ref{thm product} implies that $(M',\ga')$ is a product.  Conversely, if such a decomposition is given, then
\[
\bigoplus_{\fs \in O_S} SFH(M,\ga, \fs)=SFH_\al(M,\ga)=SFH(M',\ga')=\bZ,
\]
where the second equality comes from Theorem \ref{thm nice} and the third from Theorem \ref{thm product}.  The result follows.
\end{proof}

Lemma C and Lemma \ref{lemma decomposition} lead to Theorem A.

 \begin{thmA}
Suppose $(M,\ga)$ is a strongly balanced sutured manifold with $H_2(M;\bZ)=0$, and let $P(M,\ga)$ denote its sutured polytope. Then $P(M,\ga)$ has an extremal $\bZ$ at a $\Spin^c$ structure $\fs$ if and only if there exists a taut depth one foliation $\mF$ of $(M,\ga)$ whose sole compact leaves are the connected components of $R(\ga)$ and such that $\fs$ is extremal with respect to $PD \circ \la(\mF)$.
\end{thmA}

\begin{proof}
Let $\fs$ be an extremal $\bZ$ of $P(M,\ga)$.  By Lemma \ref{lemma decomposition}, there exists a decomposing surface $S$ with $(M,\ga) \leadsto^S (M',\ga')$ such that $[S]=\al$ and $(M',\ga')$ is a product.  By Lemma C, we can construct $\mF$, and by Remark \ref{rmk leaf homology} we have that $\la(\mF)=PD \circ [S]$.
Conversely, given such a foliation $\mF$, by the proof of Lemma C, truncating a noncompact leaf $L$ gives a decomposing surface $S$ with $\la(\mF)= PD \circ [S]$.  Since we get a product $(M',\ga')$ when decomposing along $S$, applying Lemma \ref{lemma decomposition} completes the proof.
\end{proof}

The disadvantage of the sutured Floer polytope is that it is well defined only up to translation in $H^2(M,\bdd M;\bR)$.  Thus, a choice needs to be made for there to exist a well-defined {\it dual polytope}.  However, without making any choices we can define the {\it dual sutured cones} $Q(M,\ga)$ in $H_2(M,\bdd M; \bR)$. 

Let $P$ be a polytope given by vertices $v_1, \ldots, v_{n}$ living in a vector space $V$ over some field $\bF$.  Then, as we said in the Introduction, the dual cones $Q$ can be defined to be a collection of polyhedral cones $Q_1, \ldots, Q_n$ in the dual space $V^*= \Hom(V, \bF)$ where
\[
Q_i:=\{v^* \in V^* : v^*(v_i) > v^*(v_j) \textrm{ for } i \neq j\}.
\]

\begin{deff} \label{deff dual cones} 
Define the {\it dual sutured cones $Q(M,\ga)$} in $H_2(M,\bdd M ;\bR)$ to be the dual cones of the sutured polytope $P(M,\ga)$, with each labeled by the corresponding extremal $\Spin^c$ of $P(M,\ga)$.  The cones that correspond to extremal $\bZ$ vertices of $P(M,\ga)$ are called the {\it extremal $\bZ$ cones} and are denoted by $Q_\bZ(M,\ga)$.
\end{deff}

In the introduction we mentioned how one defines a dual cone of any polytope in an affine space given just by its vertices.  Definition \ref{deff dual cones} just repeats this definition only in the language that is most useful here.

\begin{lemma} \label{lemma same as CCcones}
Let $(M,\ga)$ be a taut, strongly balanced sutured manifold with $H_2(M)=0$. Then the closure of each subset $C_\fs$ of the dual sutured cones $Q(M,\ga)$ is indeed a convex, polyhedral cone.    Also, if $\dim P(M,\ga)=b_1(M)$, then the closure of $Q(M,\ga)$ covers all of $H_2(M,\bdd M;\bR)$. 
\end{lemma}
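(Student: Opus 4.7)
The plan is to reduce both statements to standard facts about dual cones of compact convex polytopes. Fix the Kronecker pairing $\langle \cdot, \cdot \rangle \colon H^2(M, \bdd M; \bR) \times H_2(M, \bdd M; \bR) \to \bR$, which identifies the two real vector spaces as duals of one another (both of dimension $b_1(M)$ by Poincar\'e--Lefschetz duality, since $H^2(M,\bdd M;\bR) \cong H_1(M;\bR)$ and $H_2(M,\bdd M;\bR) \cong H^1(M;\bR)$). Unwrapping Definition \ref{deff dual cones}, the cone labeled by an extremal $\fs$ takes the explicit form
\[
C_\fs = \{\al \in H_2(M, \bdd M; \bR) \,:\, \langle v_\fs, \al \rangle > \langle v_\ft, \al \rangle \text{ for every other vertex } v_\ft \text{ of } P(M,\ga)\},
\]
where $v_\fs \in H^2(M,\bdd M;\bR)$ denotes the vertex corresponding to $\fs$. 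Since $P(M,\ga)$ has only finitely many vertices, this writes $C_\fs$ as an intersection of finitely many open, strict, homogeneous linear half-spaces. Passing to the topological closure relaxes each strict inequality to a non-strict one, so $\overline{C_\fs}$ is the intersection of finitely many closed homogeneous half-spaces---that is, a convex polyhedral cone---which proves the first assertion.

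For the covering statement I would invoke the compactness of $P(M,\ga)$ directly. Given any $\al \in H_2(M, \bdd M; \bR)$, the continuous linear functional $c \mapsto \langle c, \al \rangle$ attains its maximum over $P(M,\ga)$ on some nonempty face, and any nonempty face of a polytope contains at least one vertex $v_\fs$. At such a vertex, $\langle v_\fs, \al \rangle \geq \langle v_\ft, \al \rangle$ for every other vertex $v_\ft$, which is exactly the condition $\al \in \overline{C_\fs}$. Hence $\bigcup_\fs \overline{C_\fs} = H_2(M, \bdd M; \bR)$, giving the second assertion.

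The role of the hypothesis $\dim P(M,\ga) = b_1(M)$ is not the set-theoretic covering itself but rather to ensure that the covering is by cones of full dimension. Since $\dim H^2(M,\bdd M;\bR) = b_1(M)$, the assumption says that $P(M,\ga)$ is full-dimensional in its ambient space; standard separation then produces, at each vertex $v_\fs$, a linear functional strictly maximised at $v_\fs$ alone, so that each $C_\fs$ has nonempty interior and $\{\overline{C_\fs}\}_\fs$ is a bona fide polyhedral fan decomposition of $H_2(M,\bdd M;\bR)$. The only point where I expect one must tread carefully---and effectively the main technical obstacle---is precisely this dimension bookkeeping, because in the non-full-dimensional case every $\al$ annihilating the affine hull of $P(M,\ga)$ would lie simultaneously in every $\overline{C_\fs}$, so although the union still exhausts $H_2(M,\bdd M;\bR)$, the geometric picture would be degenerate. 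Beyond this observation the argument is pure elementary convex geometry.
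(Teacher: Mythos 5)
Your argument is correct and is essentially the paper's own proof spelled out: the paper disposes of this lemma in a single sentence ("This is just stating that $Q(M,\ga)$ is defined as a dual to $P(M,\ga)$"), so your elementary convex-geometry unpacking is exactly what is being taken as self-evident there. One small detail you implicitly use in the closure step: to conclude that $\overline{C_\fs}$ equals the intersection of the \emph{closed} half-spaces you need $C_\fs \neq \emptyset$, which holds because $v_\fs$ is a genuine vertex of the polytope and hence is strictly exposed by some linear functional.
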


\begin{proof}
This is just stating that $Q(M,\ga)$ is defined as a dual to $P(M,\ga)$.
\end{proof}

\begin{thmB} \label{thm C}
Let $(M,\ga)$ be a taut, strongly balanced sutured manifold with $H_2(M)=0$.  The extremal $\bZ$ cones of $Q(M,\ga)$ are precisely the foliation cones $\fC(M,\ga)$ defined by Cantwell and Conlon in \cite{CC99} (see Theorem \ref{thm CCcones}).
\end{thmB}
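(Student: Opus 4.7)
Plan for Theorem B:

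The strategy combines Theorem A with the Cantwell--Conlon theorem (Theorem \ref{thm CCcones}), using continuity of a $\Spin^c$-valued invariant of foliations within a foliation cone, and exploiting that both collections $\{Q_\fs\}$ (for extremal $\bZ$ vertices $\fs$) and $\{\fC_i\}$ are disjoint open convex rational polyhedral cones in $H_2(M,\bdd M;\bR)$.

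The first step is to match the integer rays of the two families. By Theorem A together with Lemma \ref{lemma decomposition} and Lemma C, an integer class $\al\in H_2(M,\bdd M;\bZ)$ belongs to some extremal $\bZ$ cone $Q_\fs$ if and only if $\al=PD\circ\la(\mF)$ for some taut depth one foliation $\mF$ of $(M,\ga)$ whose sole compact leaves are the components of $R(\ga)$. By Theorem \ref{thm CCcones}, this is in turn equivalent to $\al$ being a proper foliated ray, that is, an integer point of some CC cone $\fC_i$.

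The second step is to show that each $\fC_i$ sits inside a single $Q_\fs$. The key ingredient is that the $\Spin^c$ structure $[\nu_\mF]$ determined by the positive unit normal field of a taut depth one foliation $\mF$ coincides with the extremal vertex $\fs(\mF)$ from Theorem A. Indeed, Lemma \ref{lemma Gabai's truncation} produces a decomposing surface $S$ from $\mF$ with $\nu_\mF|_S=\nu_S$, which places $[\nu_\mF]\in O_S$, and Theorem \ref{thm nice} applied to the product decomposition along $S$ pins down $\fs(\mF)$ as the unique element of $O_S\cap S(M,\ga)$. Granting this identification, as a foliated form $\om$ varies continuously within $\fC_i$ the normal field $\om^\sharp$ varies continuously, so the discrete invariant $[\nu_{\mF_\om}]\in\Spin^c(M,\ga)$ is locally constant and hence constant on the connected set $\fC_i$. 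All rational rays of $\fC_i$ thus share the same extremal vertex $\fs(i)$, and by openness of $Q_{\fs(i)}$ as a rational polyhedral cone together with density of rationals, $\fC_i\subset Q_{\fs(i)}$. A parallel argument (or direct use of the connectedness of $Q_\fs$ inside the disjoint open union $\bigsqcup_i\fC_i$) yields the reverse inclusion $Q_\fs\subset\fC_{i(\fs)}$. Chaining $Q_\fs\subset\fC_{i(\fs)}\subset Q_{\fs(i(\fs))}$ and using the disjointness of distinct extremal $\bZ$ cones (as interiors of distinct maximal cones of the normal fan of $P(M,\ga,t)$) forces $\fs(i(\fs))=\fs$, so $Q_\fs=\fC_{i(\fs)}$ and the two families of cones coincide bijectively.

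The main obstacle is the identification $[\nu_\mF]=\fs(\mF)$: while $[\nu_\mF]\in O_S$ is essentially tautological, pinning down $[\nu_\mF]$ as the \emph{unique} element of $O_S\cap S(M,\ga)$ amounts to showing $SFH(M,\ga,[\nu_\mF])\neq 0$, which rests on a careful compatibility between the Gabai-style construction of Lemma C and Juh\'asz's decomposition Theorem \ref{thm nice}.
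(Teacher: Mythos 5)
The paper's proof of Theorem~B is considerably shorter and more elementary than what you propose, and it avoids the normal-field $\Spin^c$ structure entirely. The argument there is: by Theorem~\ref{thm CCcones}, $\fC(M,\ga)$ is a finite collection of disjoint open convex (rational) polyhedral cones, and by Definition~\ref{deff dual cones} so is $Q_\bZ(M,\ga)$; Theorem~A shows that the integer lattice points contained in each collection are identical (both being the classes $\al=\mathrm{PD}\circ\la(\mF)$ of depth one foliations). Since the two collections are unions of disjoint open \emph{rational} polyhedral cones, equality of integer points already forces the collections of cones to coincide: if a connected cone $\fC_i$ met two distinct $Q_{\fs_1}$, $Q_{\fs_2}$, convexity and openness of $\fC_i$ would produce a rational (hence, after scaling, integral) point of $\fC_i$ lying on a rational wall of the normal fan and therefore in no $Q_\fs$ --- contradicting the bijection of integer points given by Theorem~A. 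So $\fC_i\subset Q_{\fs(i)}$, the reverse inclusion is symmetric, and disjointness pins each $\fC_i=Q_{\fs(i)}$.

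Your proposal instead routes the argument through the $\Spin^c$ structure $[\nu_\mF]$ determined by the positive normal field of the foliation, and uses local constancy of $[\nu_\mF]$ across a foliation cone. This is a genuinely different strategy, but it contains a real gap that you yourself flag: you need $[\nu_\mF]=\fs(\mF)$, i.e.\ that the normal-field $\Spin^c$ structure is the \emph{unique} element of $O_S\cap S(M,\ga)$. The inclusion $[\nu_\mF]\in O_S$ is indeed essentially tautological, but $O_S$ can be large, and nothing you cite shows $SFH(M,\ga,[\nu_\mF])\neq 0$. Establishing this would require unwinding the $\Spin^c$-graded form of Juh\'asz's decomposition theorem (tracking precisely which element of $O_S$ carries the $\bZ$ coming from the product), which is a nontrivial piece of work not available among the stated results of this paper. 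Unless you supply that identification, the locally-constant-invariant step collapses, and you should instead fall back on the elementary rational-cone argument sketched above, which is what the paper intends.
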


\begin{proof}
By Theorem \ref{thm CCcones} the foliation cones $\fC(M,\ga)$ are open, convex, polyhedral cones in \linebreak $H_2(M,\bdd M;\bR)$.  Further, the integral homology classes $\al \in H_2(M,\bdd M)$ in $\fC(M,\ga)$ are precisely those for which there exists a foliation $\mF$ of depth one such that $\la(\mF)=PD \circ \al$.  By Theorem \nolinebreak A, it follows extremal $\bZ$ points of $P(M,\ga)$  correspond precisely to such foliations.  In particular, $Q_\bZ$ is a collection of open, convex, polyhedral cones whose integral homology classes correspond to  depth one foliations via the same correspondence of \linebreak $\al=PD \circ \la(\mF)$.  So $\fC(M,\ga)$ is the same as $Q_\bZ(M,\ga)$. 

\end{proof} 

We conclude with a corollary that describes the sutured manifold analogue of the Thurston norm and its fibred faces for closed 3-manifolds. 

Denote by $B_y$ the the polytope in $H_2(M,\bdd M;\bR)$ that is the unit ball  of the Juh\'asz's seminorm $y$ described in Remark \ref{rmk Ju norm}.  The faces of $B_y$ that are dual to extremal $\bZ$ $\Spin^c$ structures in $-P(M,\ga)$ are called the {\it foliated faces}.  

\begin{cor}
Let $(M,\ga)$ be a taut, strongly balanced sutured manifold with $H_2(M)=0$.  Then each foliation cone of $\fC(M,\ga)$ is subtended by a foliated face of $B_y$.
\end{cor}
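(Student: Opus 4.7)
The plan is to combine Theorem B with the polar duality between $-P(M,\ga)$ and $B_y$ fixed in Remark \ref{rmk Ju norm}. By Theorem B, every foliation cone of $\fC(M,\ga)$ is an extremal $\bZ$ cone $Q_\fs \subset Q(M,\ga)$, indexed by an extremal $\bZ$ vertex $\fs$ of the sutured polytope $P(M,\ga)$. So it suffices to match each such $Q_\fs$ with a foliated face of $B_y$.

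Reflecting, $-\fs$ is an extremal $\bZ$ vertex of $-P(M,\ga)$. Since $B_y = (-P(M,\ga))^{\circ}$, polar duality pairs each vertex of $-P(M,\ga)$ with a facet of $B_y$, and $-\fs$ corresponds to the facet
\[
F_{-\fs} = \{\,\alpha \in B_y : \langle -\fs, \alpha \rangle = y(\alpha) = 1\,\},
\]
which is a foliated face by definition. This gives a canonical bijection between foliation cones of $\fC(M,\ga)$ and foliated faces of $B_y$.

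It remains to verify that the foliation cone $Q_\fs$ is the cone subtended by $F_{-\fs}$ through the origin. A direct support-function computation delivers this: using $y(\alpha) = \max_{c \in P(M,\ga)}\langle -c, \alpha\rangle$, a nonzero $\alpha$ sits on an open ray from the origin into $\Int(F_{-\fs})$ precisely when the extremum in the definition of $y(\alpha)$ is attained uniquely at the vertex $\fs$, which is the condition cutting out $Q_\fs$. All substantive work is packaged inside Theorem B; the only housekeeping is tracking signs across the chain $P(M,\ga) \leftrightarrow -P(M,\ga) \leftrightarrow B_y$ prescribed by Remark \ref{rmk Ju norm} and Corollary \ref{yt dual to polytope}, and this is routine.
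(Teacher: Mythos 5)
Your proposal is correct and takes the approach the paper clearly intends; the paper itself gives no written proof of this corollary, treating it as an immediate consequence of Theorem B, the polar duality $B_y = (-P(M,\ga))^\circ$ recorded in Remark \ref{rmk Ju norm}, and the definition of the dual cones. One point worth keeping in mind while tracking signs: your support-function computation shows that $\al$ lies over $\Int F_{-\fs}$ exactly when $\fs$ uniquely \emph{minimises} $\langle c, \al\rangle$ over $P(M,\ga)$, whereas Definition \ref{deff extremal} and the introduction write $Q_{\fs}$ with the inequality $\al(\fs) > \al(\ft)$; since \eqref{equation calt}, Lemma \ref{lem calt} and the proof of Lemma \ref{lemma decomposition} all force the minimising convention for the face picked out by a decomposing surface, the ``$>$'' there should be read as a typo, and your identification of the cone subtended by $F_{-\fs}$ with $Q_{\fs}$ is the intended one.
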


As $y$ involves the somewhat artificial choice of putting the centre of mass of $P(M,\ga)$ at $0 \in H^2(M,\bdd M;\bR)$, the obvious question is why is this corollary not phrased in terms of $y_t$.  Of course, a similar statement could be made for the the unit ball of the geometric sutured function $y_t$, however only if it makes sense to talk about the unit ball, that is, if $y_t$ is at least a seminorm.

\subsection{Examples}  \label{subsection examples}

 Finally, we illustrate Theorem B by checking that the examples of foliations cones computed by Cantwell and Conlon \cite{CC99,CC sutured Thurston norm} are indeed dual to the associated sutured Floer polytopes.  

Let $L$ be a knot or link in $S^3$, and $R$ a minimal genus Seifert surface of $L$.  Then denote by $S^3(R)$ the strongly balanced sutured manifold obtained by removing an open neighbourhood of $R$ from $S^3$, that is, $S^3(R):=\left( S^3 \setminus \Int(R \times I), \bdd R \times I\right)$. 

Let $P(2r,2s,2t)$ denote the the standard three-component pretzel link, and let $R$ be the Seifert surface obtained by the Seifert algorithm.   Examples 2 and 5 from \cite{CC99} describe the foliation cones of $S^3(R)$ for $P(2,2,2)$ and $P(2,4,2)$, respectively.  The sutured Floer polytopes for these examples were computed in \cite[Ex.\,8.6]{FJR10}, and it is not hard to see that they are indeed dual to the foliation cones.

Example 4 in \cite{CC99} describes the foliation cones of $S^3(R)$ for a 2-component link and the Seifert surface given in Figure \ref{fig CClink}.

\begin{figure}[h]
\centering
\includegraphics [scale=0.5]{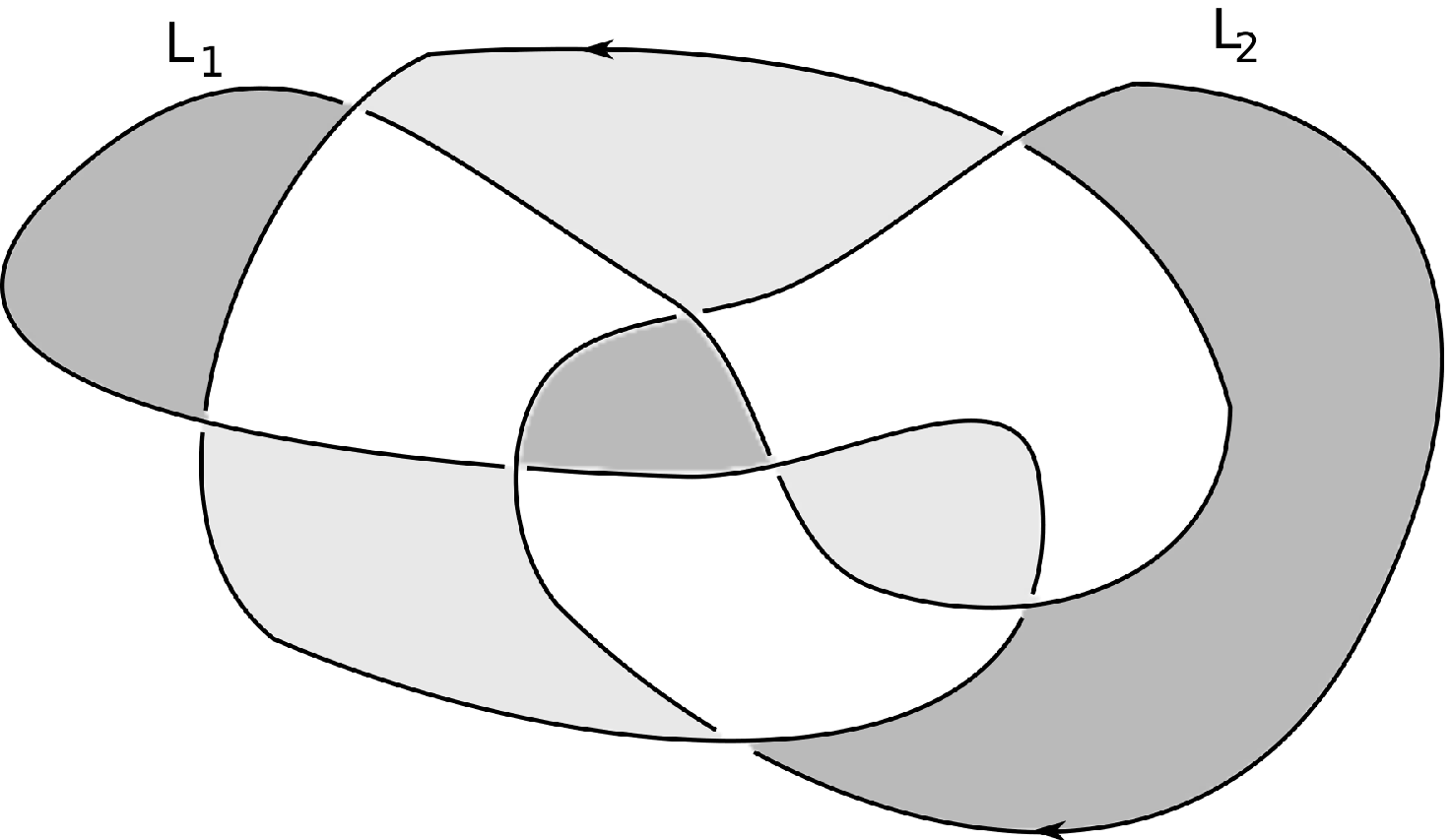}
\hspace{1cm}
\includegraphics [scale=0.5]{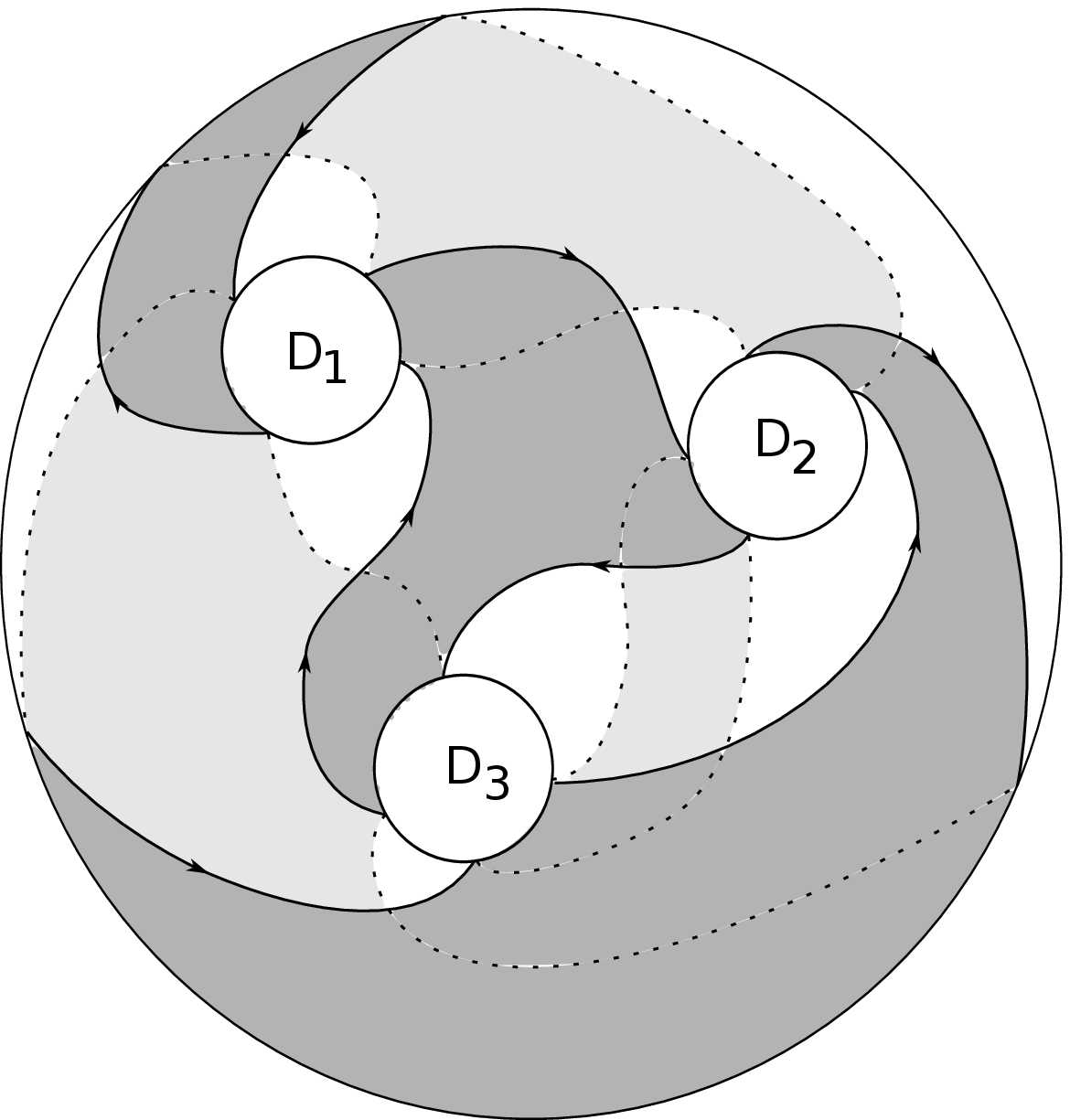}
\caption{The 2-component link and its Seifert surface.}
\label{fig CClink}
\end{figure}

As this is a non-split alternating link, by \cite[Cor.\,6.11]{FJR10} it follows that $S^3(R)$ is a {\it sutured L-space}, that is, the group $SFH(M,\ga,\fs)$ is either trivial or isomorphic to $\bZ$ for every $\Spin^c$ structure $\fs$ \cite[Cor\,6.6]{FJR10}.  Therefore, $SFH(M,\ga)$ and, in particular $P(M,\ga)$, can easily be computed from the map $\pi_1(R_-(\ga)) \to \pi_1(M)$ using Fox calculus \cite[Prop.\,1.2]{FJR10}.  

\begin{convention}
For the remainder of the paper, homology groups are understood to be taken with coefficients in $\bR$.
\end{convention}

First, let us describe the foliation cones. Considering the isomorphism given by Poincar\'e duality $H^1(M) =H_2(M,\bdd M)$, we take the foliation cones to live in $H_2(M,\bdd M)=\bR^3$.  Following Example 4 and Figure 14 from \cite{CC99}, denote by $e_1,e_2,e_3$ the basis of $H_2(M,\bdd M)$, given by the disks $D_i$ in Figure \ref{fig CClink}.  Let $e_0:=-(e_1+e_2+e_3)$.  
Then there are five convex foliation cones whose closures cover all of $H_2(M,\bdd M)$: four of the cones are 3-sided, and the fifth is 4-sided.  The cones are determined by rays through five points 
\[
-e_2-e_3, e_2 , -e_0 , e_3 , -e_1.
\]
In Figure \ref{fig FoliationConeEx} these points have been connected in such a way that the sides of the pyramid subtend each cone.  This is the easiest way for visualising the duality with the sutured Floer polytope.

\begin{figure}[h]
\centering
\includegraphics [scale=0.6]{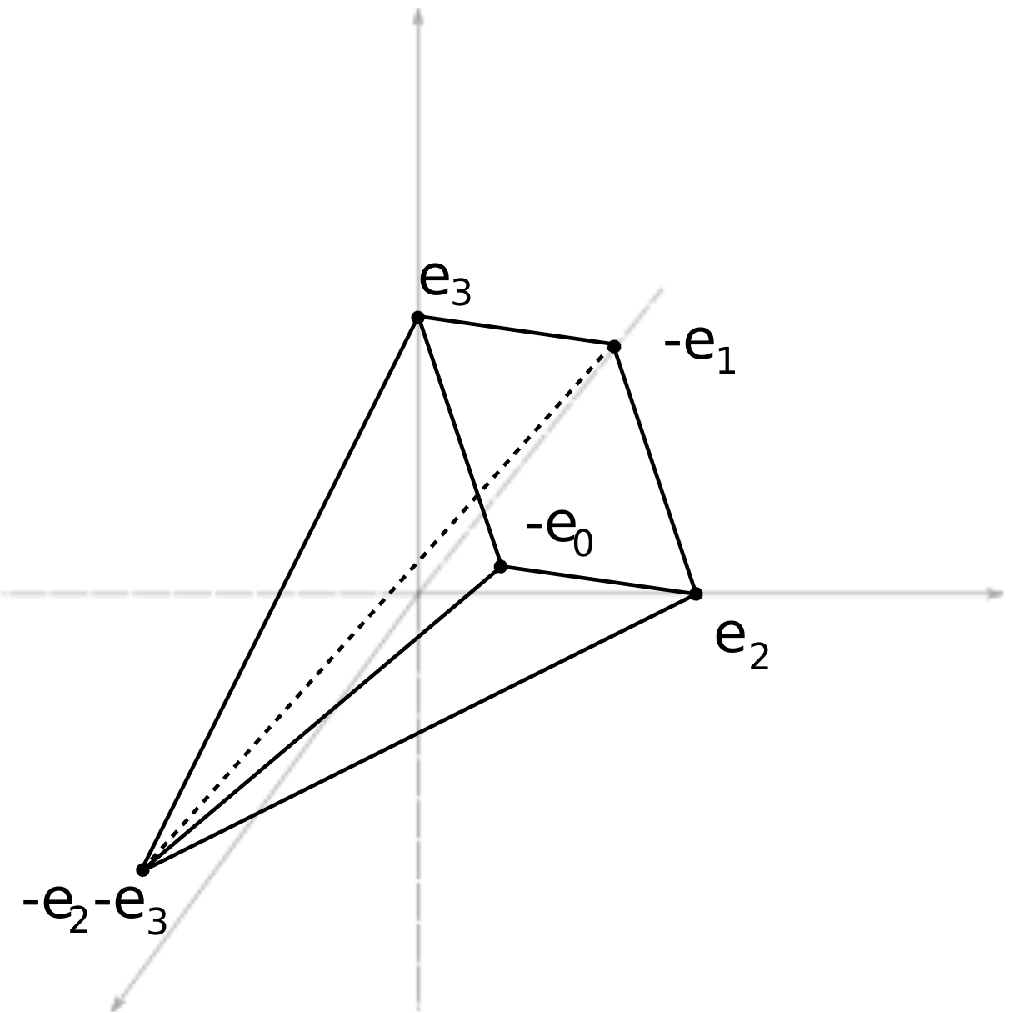}
\hspace{1cm}
\includegraphics [scale=0.6]{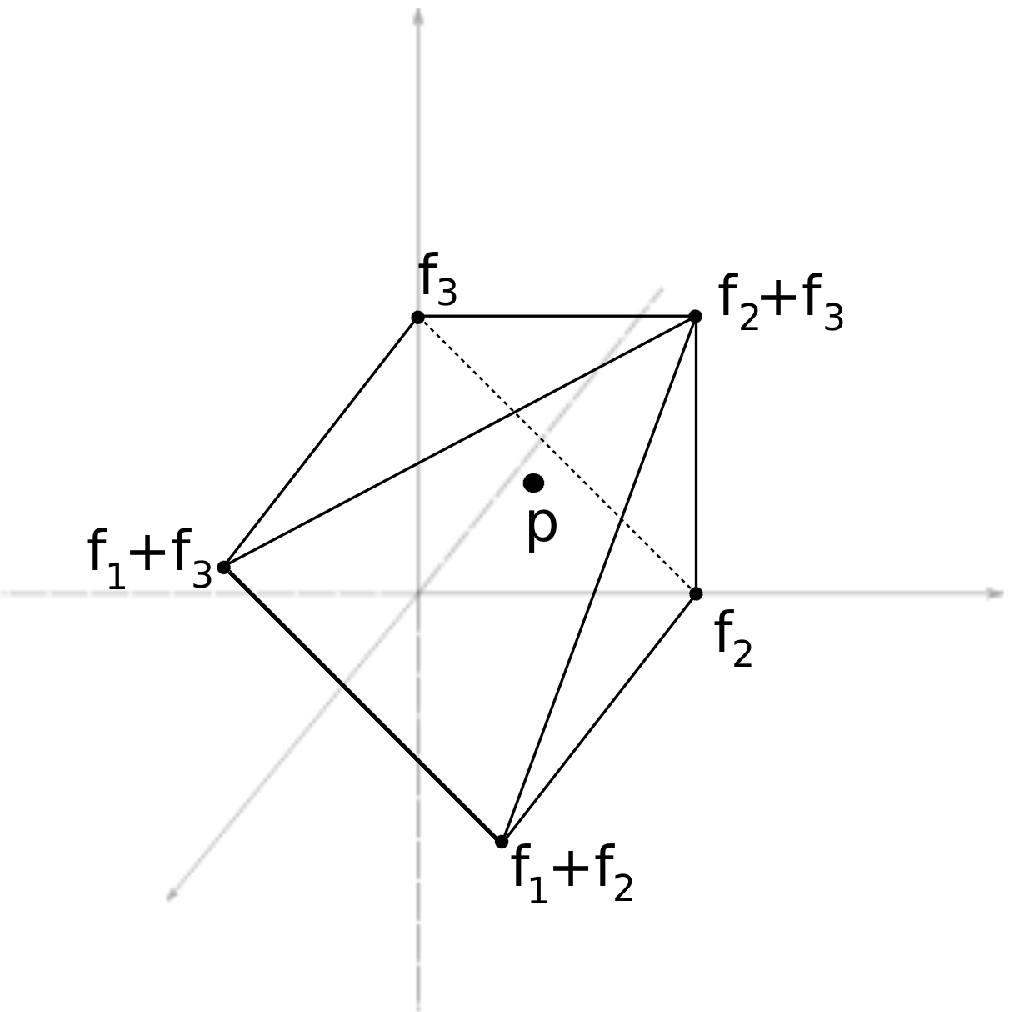}
\caption{Left: the foliation cones given by rays through the vertices of the pyramid in $H_2(M,\bdd M)=\bR^3$.  Right: the sutured Floer polytope given up to translation in $H^2(M,\bdd M)=\bR^3$, with $p$ the centre of mass.  }
\label{fig FoliationConeEx}
\end{figure}

Let $f_1,f_2,f_3$ be the basis of $H^2(M,\bdd M)$ dual to $e_1,e_2,e_3$; here``dual'' refers to the duality of $H^2(M,\bdd M)$ and $H_2(M,\bdd M)$ as vector spaces.  Using the same notation, and the method of computation via Fox calculus, we find that the sutured Floer polytope $P(S^3(R))$ in $H^2(M,\bdd M)$ is a pyramid with a rectangular basis given by the vectors (up to translation)
\[
f_2+f_3,f_2,f_3,f_1+f_3,f_1+f_2,
\]
where $f_2+f_3$ is the apex of the pyramid in Figure \ref{fig FoliationConeEx} .

To compute the dual of $P(S^3(R))$, we first have to find the centre of mass $p$ of the polytope, then translate the polytope so that the centre of mass is at $0 \in H^2(M,\bdd M)$.  A bit of elementary geometry shows that $p=\frac{1}{5}(2f_1+3f_2+3f_3)$ in the current coordinate system.  Translate the polytope, or equivalently change the coordinates, so that $p=0$.  The dual cones of $P(S^3(R))$ are then given by five rays normal to the five sides of $P(S^3(R))$.  Using symmetries of the polytope it is not hard to compute that these rays  precisely pass through $-e_2-e_3, e_2, -e_0, e_3,-e_1$, which described the foliation cones.

\begin{rmk}
In all three of the above examples, $x^s=z$, that is, the sutured Thurston norm of $(M,\bdd M)$ agrees with the symmetrised sutured seminorm $z(\al)=\frac{1}{2}\left(y(\al) + y(-\al)\right)$.  This equality does not hold in general, as was shown in \cite[Prop.\,7.16]{FJR10} using an example of Cantwell and Conlon \cite[Ex.\,2]{CC sutured Thurston norm}.  From their respective computations it is not hard to check that the sutured Floer polytope and the foliation cones are dual.
\end{rmk}

\begin{rmk}
In \cite{Altman1} we show that there exists an infinite family of knots and pairs of Seifert surfaces $R_1$ and $R_2$ associated to each knot, where the polytopes $P(S^3(R_1))$ and $P(S^3(R_2))$ are not affine isomorphic.  In other words, the sutured Floer polytope of $S^3(R)$ is not a knot invariant, and therefore, by Theorem B it is also true that the foliation cones of a Seifert surface complement are not a knot invariant. 
\end{rmk}

%-------------------------------------- BIBLIOGRAPHY ---------------------------------------%

\vspace{1cm}

{\sc \noindent Mathematics Institute, Zeeman Building, University of Warwick, UK.}
\\
E-mail address: irida.altman@gmail.com

\end{document}